\documentclass{amsart}

\usepackage{mathtools}  
\usepackage{mathrsfs}   
\usepackage{bm}         
\usepackage{physics}
\usepackage{tikz-cd}
\usepackage{enumitem}
\usepackage{amssymb}
\usepackage{hyperref}
\usepackage[backend=biber, style=numeric]{biblatex}

\theoremstyle{definition}
\newtheorem{definition}{Definition}[section]
\newtheorem{example}[definition]{Example}

\newtheorem{remark}[definition]{Remark}
\theoremstyle{plain}
\newtheorem{theorem}[definition]{Theorem}
\newtheorem{proposition}[definition]{Proposition}
\newtheorem{corollary}[definition]{Corollary}

\newtheorem{lemma}[definition]{Lemma}

\newcommand{\cim}{\arrow[r, hook, "\shortmid" marking]}

\begin{document}
\title{A Super Version of a theorem of Fricke-Klein}
\begin{abstract}
    In this paper, we study the character variety of the supergroup $\operatorname{OSp}(1|2)$ in the specific case of the free group on two letters as a first step towards an algebraic treatment for the character variety of supergroups. One can interpret this as an analogue of a result of Fricke and Klein, who studied this in the case of $\operatorname{SL}(2,\mathbb{C})$. We give a count of independent generators of the ring of invariants and discuss a few geometric consequences at the end.
\end{abstract}
\maketitle 
\author{Marcel Dang}
\tableofcontents
\section{Introduction}
A result by Fricke and Klein \cite{fricke1897vorlesungen} states, that the $SL(2,\mathbb{C})$-invariant functions on $SL(2,\mathbb{C}) \times SL(2,\mathbb{C})$ are given by the ring 
\begin{align}
    \mathbb{C}[\operatorname{tr}(A),\operatorname{tr}(B),\operatorname{tr}(AB)].
\end{align}
In the language of geometric invariant theory, we can identify the categorical quotient of the product variety $\mathrm{SL}_2(\mathbb{C}) \times \mathrm{SL}_2(\mathbb{C})$ by conjugation as affine space, i.e.
\begin{center}
    \begin{tikzcd}
    \mathrm{SL}_2(\mathbb{C}) \times \mathrm{SL}_2(\mathbb{C}) \arrow[r, "\pi"] & \operatorname{Spec}(\mathbb{C}[\operatorname{tr}(A),\operatorname{tr}(B),\operatorname{tr}(AB)]) \cong \mathbb{C}^3.
    \end{tikzcd}
\end{center}
Another geometric reinterpretation of this result is  that the character variety of the free group on two letters and $SL(2,\mathbb{C})$ is isomorphic to $\mathbb{C}^3$, i.e. 
\begin{align}
    \operatorname{Hom}(F_2,SL(2,\mathbb{C}))//SL(2,\mathbb{C}) \cong \mathbb{C}^3.
\end{align}
In this paper we establish a variant of the theorem of Fricke and Klein for a supergeometric extension $\mathrm{SL}_2(\mathbb{C})$, specifically the complex supergroup $\mathrm{OSp}(1|2)$. The motivation of this result comes from the fact, that $\operatorname{OSp}(1|2)$ is the super generalization of $\operatorname{SL}(2,\mathbb{C})$ and takes a similar place in the theory of super Riemann surfaces, as $\operatorname{SL}(2,\mathbb{C})$ does in the classical theory. On the real analytic side of this story, the character variety is related to the super Teichmüller space, where each connected component of the space is homeomorphic to $\mathbb{R}^{6g-6|4g-4}/\mathbb{Z}_2$ \cite{bouschbacher2013shear} \cite{penner2019decorated}, where $\mathbb{Z}_2$ only acts on the odd coordinates. \\
We will show that the character variety for the free group on two letters $F_2$ and the supergroup $\mathrm{OSp}(1|2)$ fails to be a superscheme. Many results in supergeometry can be obtained by the same proof as in the classical setting and the literature frequently leaves the explicit verification to the reader. In this work we opt for a self-contained exposition for the necessary results adapted to the super setting.
\textbf{Organization of the paper.} In section 2 we will give a short exposition on the result of Fricke and Klein. In section 3, we start by introducing basic notions of supergeometry, algebraic supergroups and invariant theory to understand our main result. In section 4 we prove our main result.  In section 5 we introduce the character variety and the character stack, which is the geometric interpretation of our result, and discuss the geometric implication of the main result.
\begin{theorem}
The $\operatorname{OSp}(V)$-invariant polynomial functions (by conjugation) on $\operatorname{OSp}(V) \times \operatorname{OSp}(V)$ are generated by 7 independent traces.
\end{theorem}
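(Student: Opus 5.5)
The plan is to reduce the statement to the classical theorem of Fricke and Klein together with a finite computation in the odd directions. I would use the Harish-Chandra pair description of $\operatorname{OSp}(V)$: an even function $f$ on $\operatorname{OSp}(V)\times\operatorname{OSp}(V)$ is invariant under conjugation exactly when two conditions hold. First, $f$ is invariant under the reduced group $\operatorname{OSp}(V)_{\bar 0}\cong \mathrm{Sp}(2)=\mathrm{SL}_2(\mathbb{C})$. Second, $f$ is annihilated by the vector fields that the odd part $\mathfrak{g}_{\bar 1}\cong\mathbb{C}^2$ of $\mathfrak{osp}(1|2)$ induces through the adjoint action.

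\textbf{Setting up coordinates.} Write each group element as $A=(A_0,\alpha)$, where $A_0\in\mathrm{SL}_2$ is the body and $\alpha=(\alpha_1,\alpha_2)$ are the two odd coordinates. These transform as the standard representation $\mathbb{C}^2$ of $\mathrm{SL}_2$, and similarly $B=(B_0,\beta)$. This gives an $\mathrm{SL}_2$-equivariant identification
\begin{align}
\mathcal{O}(\operatorname{OSp}(V)^2)\cong \mathbb{C}[\mathrm{SL}_2^2]\otimes\Lambda(\alpha_1,\alpha_2,\beta_1,\beta_2).
\end{align}
Because the odd part of the group acts by derivations that change the odd degree by one, I would filter by odd degree rather than grade. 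I then take $\mathrm{SL}_2$-invariants degree by degree.
\begin{itemize}
\item In degree $0$, Fricke--Klein gives $\mathbb{C}[\operatorname{tr}A_0,\operatorname{tr}B_0,\operatorname{tr}A_0B_0]$.
\item In degree $2$, the invariants form a module over this ring. It is generated by symplectic pairings $\omega(M\alpha,N\beta)$, $\omega(M\alpha,N\alpha)$ and $\omega(M\beta,N\beta)$, where $M,N$ are words in $A_0,B_0$.
\item In degree $4$, the only invariant is the top form $\alpha_1\alpha_2\beta_1\beta_2$ times degree-$0$ invariants.
\end{itemize}

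\textbf{Imposing odd invariance.} Next I impose the odd-invariance condition. The cleanest route is a slice. The odd part of the group moves $\alpha$ by $(A_0-1)\theta$ to first order. On the open set where $A_0-1$ is invertible, i.e.\ $\operatorname{tr}A_0\neq 2$, one can therefore gauge $\alpha$ to $0$. The residual symmetry is then the stabilizer of this condition, and the invariants restricted to the slice $\{\alpha=0\}$ are $\mathrm{SL}_2$-invariants of $(A_0,B_0,\beta)$ subject to the residual odd constraint. This determines which degree-$2$ combinations extend to genuine $\operatorname{OSp}$-invariants. I then match them with supertraces. I expect the answer to be:
\begin{itemize}
\item the three even generators $\operatorname{str}A$, $\operatorname{str}B$, $\operatorname{str}AB$, whose bodies recover the Fricke--Klein traces, with nilpotent corrections;
\item four further supertraces of longer words, such as $\operatorname{str}(A^2B)$, $\operatorname{str}(AB^2)$, $\operatorname{str}(A^{-1}B)$ and $\operatorname{str}(A^2B^2)$, whose nilpotent parts span the degree-$2$ invariants modulo the products already generated.
\end{itemize}
Polynomiality across the locus $\operatorname{tr}A_0=2$ is then checked by clearing denominators, or by redoing the slice argument with $B$ or $AB$ in place of $A$.

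\textbf{Independence.} I would show that no one of the seven traces is a polynomial in the other six. For the even ones this follows by passing to the body and applying Fricke--Klein. For the others, I would compare the nilpotent parts in degree $2$ modulo $\mathbb{C}[\operatorname{tr}A_0,\operatorname{tr}B_0,\operatorname{tr}A_0B_0]$-linear combinations of products.

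\textbf{Main obstacle.} I expect the hardest step to be the degree-$2$ analysis. There I must determine exactly which $\mathrm{SL}_2$-invariant quadratic expressions in the odd variables survive the odd-invariance condition. I must also show that four supertraces generate them as an algebra together with the even invariants. The difficulty is that the module of degree-$2$ $\mathrm{SL}_2$-invariants is large, so generation must use products of nilpotent parts of traces as well as linear combinations. I would attack this by an explicit computation on the slice, using the Cayley--Hamilton relation for $\mathrm{SL}_2$ to reduce every word to a bounded length.
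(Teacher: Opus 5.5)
\textbf{The decisive step is missing.} The count of seven rests on one claim, and you never carry it out. You \emph{expect} four further supertraces whose nilpotent parts are needed and independent, but give no argument, and this is exactly where the count is decided. (The paper reaches seven by a different, purely numerical route: nine generators for $\mathbb{C}[\operatorname{End}(V)^2]^{\operatorname{OSp}(V)}$ from the FFT/SFT for $V^{\otimes 4}$, minus the two Berezinian relations in the Reynolds exact sequence. None of that enters your argument.)

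\textbf{Carried out, the degree-$2$ analysis gives rank one.} Write $A=A_0\exp(\alpha_1q_1+\alpha_2q_2)$ and $B=B_0\exp(\beta_1q_1+\beta_2q_2)$, with $q_1,q_2$ as in the paper. The lowest-order part of the odd action is $\alpha\mapsto\alpha+(A_0^{-1}-1)\tau$, $\beta\mapsto\beta+(B_0^{-1}-1)\tau$. Imposing this on the generic-rank-six module of $\mathrm{SL}_2$-invariant odd quadratics leaves only the multiples $t\,\Omega$ of
\[
\Omega=(2-\operatorname{tr}B_0)\,\alpha_1\alpha_2+(2-\operatorname{tr}A_0)\,\beta_1\beta_2-\omega\bigl(\alpha,(A_0^{-1}-1)(B_0-1)\beta\bigr),\qquad \omega(u,v)=u_1v_2-u_2v_1 .
\]
Here $t\in\mathbb{C}[\operatorname{tr}A_0,\operatorname{tr}B_0,\operatorname{tr}A_0B_0]$ is forced to be polynomial, because $2-\operatorname{tr}A_0$ and $2-\operatorname{tr}B_0$ are coprime. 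Your two slices see exactly this: $\Omega$ restricts to $(2-\operatorname{tr}A_0)\beta_1\beta_2$ on $\alpha=0$, and to $(2-\operatorname{tr}B_0)\alpha_1\alpha_2$ on $\beta=0$. No invariant of pure odd degree $4$ survives either, since $h\,\alpha_1\alpha_2\beta_1\beta_2$ is annihilated only when $h=0$. So an invariant with vanishing body is determined by its degree-$2$ part, and the nilpotent invariants form a torsion-free module of rank one.

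\textbf{Only one extra trace is needed, so your independence step fails.} Set $n:=\operatorname{str}(A^2B)+1-(\operatorname{str}A+1)(\operatorname{str}AB+1)+(\operatorname{str}B+1)$. Then $\Omega$ is exactly the degree-$2$ part of $n$. On the paper's triangular normal form, $n$ restricts to $\xi\psi(\mu^{-1}-1)(1-\lambda)$, which agrees with the paper's own Proposition. Consequently the invariant ring is $S\oplus S\,n$, where $S=\mathbb{C}[\operatorname{str}A,\operatorname{str}B,\operatorname{str}AB]$ is a polynomial ring and $n^2=0$. It is therefore generated by the four supertraces $\operatorname{str}A$, $\operatorname{str}B$, $\operatorname{str}AB$, $\operatorname{str}(A^2B)$. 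The other traces on your list are polynomials in these, for instance
\[
\operatorname{str}(AB^2)=\operatorname{str}(A^2B)+(\operatorname{str}B-\operatorname{str}A)(\operatorname{str}AB+2),
\]
\[
\operatorname{str}(A^{-1}B)=\operatorname{str}(A^2B)+(\operatorname{str}A+2)(\operatorname{str}B-\operatorname{str}AB).
\]
So the claim that no one of your seven traces is a polynomial in the other six is false. The Harish-Chandra/filtration framework you set up is a sensible way to compute this ring. Carried through, however, it gives four generators, not seven, so your proposal cannot establish the statement in the form given.
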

\textbf{Acknowledgments.}
I would like to thank my advisor Paul Norbury for his constant guidance and confidence in my work. I would also like to thank Gustav Berth, Miles Koumouris and Lior Yanovski for useful discussion. \\
I am supported by the University of Melbourne Graduate Research Scholarship.
\section{Fricke-Klein}
In this section, we give an elementary proof of the result of Fricke and Klein \cite{fricke1897vorlesungen}  and separately Vogt \cite{vogt1889invariants}. In more recent times, Goldman reproved this theorem in \cite{goldman2005exposition}, where he used the Cayley-Hamilton theorem in an essential way in the proof. In this version, we forego the usage of the Cayley-Hamilton theorem and instead make use of a simultaneous conjugation of a pair of matrices into upper and lower triangular forms. 
Before we start proving the result of Fricke and Klein, let us first state the aforementioned triangulation lemma.
\begin{lemma}\label{triaSL}
    Let $A_0 \in \operatorname{SL}(2,\mathbb{C})$ and  $B_0 \in \operatorname{SL}(2,\mathbb{C})$.  Then one can always find an element $g \in \operatorname{SL}(2,\mathbb{C})$ that acts by simultaneous conjugation on the pair $(A,B)$ such that
    \begin{align}
   A_0 \coloneqq  gAg^{-1}= \begin{pmatrix}
        \mu & 0 \\
        1 & \mu^{-1} 
    \end{pmatrix}\quad , \quad 
  B_0 \coloneqq gBg^{-1}=  \begin{pmatrix}
       \lambda & \kappa \\
        0 & \lambda^{-1}
    \end{pmatrix}
\end{align}
\end{lemma}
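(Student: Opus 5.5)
The statement cannot hold for arbitrary pairs. If $A=B=I$, every conjugate of $A$ is $I$, which is not of the form $\begin{pmatrix}\mu&0\\1&\mu^{-1}\end{pmatrix}$. So I would prove it under the genericity hypothesis that $A$ and $B$ have \emph{no common eigenvector}, equivalently that the pair is irreducible. I expect this is the hypothesis actually used later, since this locus is Zariski dense. (The statement also mislabels $A_0,B_0$ in the hypothesis; they should be $A,B$.)

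\textbf{Reformulation.} Conjugating by $g$ means writing $A$ and $B$ in the basis given by the columns $v,w$ of $P=g^{-1}$. The target form then says three things:
\begin{itemize}
\item $Bv=\lambda v$, so $v$ is an eigenvector of $B$;
\item $Aw=\mu^{-1}w$, so $w$ is an eigenvector of $A$;
\item $Av=\mu v+w$, with coefficient of $w$ exactly $1$;
\end{itemize}
and in addition $\det P=1$. Nothing is required of the upper-right entry $\kappa$ of $B_0$, and the diagonal entries of $B_0$ are automatically $\lambda,\lambda^{-1}$ because $\det B=1$.

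\textbf{Choosing the basis.} Since $\mathbb{C}$ is algebraically closed, I pick an eigenvector $v$ of $B$ with eigenvalue $\lambda$, and an eigenvector $w$ of $A$, naming its eigenvalue $\mu^{-1}$. The vector $v$ is not an eigenvector of $A$, by the no-common-eigenvector hypothesis. Hence $v$ and $w$ are linearly independent, and in the basis $(v,w)$ the matrix of $A$ is lower triangular, of the form $\begin{pmatrix}a&0\\c&\mu^{-1}\end{pmatrix}$. Because $\det A=1$, the entry $a$ equals $\mu$. Moreover $c\neq 0$, since $c=0$ would make $v$ an eigenvector of $A$. The matrix of $B$ in this basis is upper triangular with diagonal $\lambda,\lambda^{-1}$ and some entry $\kappa$.

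\textbf{Normalisation.} Replace $v$ by $tv$ and $w$ by $sw$ with $s,t\in\mathbb{C}^\times$. This keeps the triangular shapes, and the relation becomes
\begin{align}
A(tv)=\mu\,(tv)+\tfrac{ct}{s}\,(sw),
\end{align}
while $\det P$ changes to $ts\det[v,w]$. So I need
\begin{align}
s=ct \quad\text{and}\quad ct^2\det[v,w]=1,
\end{align}
and the second equation has a solution $t$ because $c\neq 0$ and $\mathbb{C}$ is algebraically closed. Setting $g=P^{-1}\in \mathrm{SL}(2,\mathbb{C})$ then gives exactly the stated forms.

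The only real obstacle is the degenerate case: the argument, and indeed the statement, fails when $A$ and $B$ share an eigenvector. I would state the lemma for that generic locus, or for pairs with $\operatorname{tr}(ABA^{-1}B^{-1})\neq 2$, which is the standard criterion equivalent to having no common eigenvector. Everything else is routine linear algebra over an algebraically closed field.
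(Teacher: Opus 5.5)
Your proof is correct for the statement you actually prove, and your counterexample $A=B=I$ is valid: the lemma as stated is false. The paper's proof does not notice this, but it quietly avoids the bad cases. In the diagonalizable case it assumes $\mu\neq\mu^{-1}$, so $A=\pm I$ is never treated. It also assumes that the quartic $cy^4+\frac{a-d}{\mu-\mu^{-1}}y^2-\frac{b}{(\mu-\mu^{-1})^2}=0$ has a root $y\neq 0$. In the parabolic case it assumes that a certain quadratic in $y$ is solvable. One checks that these assumptions fail exactly when every eigenvector of $B$ is an eigenvector of $A$.

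Your route is genuinely different. The paper splits according to the Jordan type of $A$. It then conjugates by explicit one-parameter families: the matrix $\begin{pmatrix}0&-x^{-1}\\x&y\end{pmatrix}$, or upper triangular matrices followed by the rotation $\begin{pmatrix}0&-1\\1&0\end{pmatrix}$. It kills an off-diagonal entry of $B$ by solving a polynomial equation in $y$. You instead take the basis consisting of an eigenvector of $B$ and an eigenvector of $A$. This handles both Jordan types at once, reduces the normalisation to a single square root, and makes the needed hypothesis explicit. What the paper's computational route buys is explicit formulas for $g$ in terms of the matrix entries. It then imitates this template for $\operatorname{OSp}(1|2)$ in Lemma~\ref{triaOSP}, where one solves for an odd parameter $\nu$ and eigenvector arguments over a Grassmann algebra are less immediate.

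One correction to your closing remark: the statement does not fail for every pair sharing an eigenvector.
\begin{itemize}
\item The normal form itself has the common eigenvector $e_2$ when $\kappa=0$.
\item For example, $B=\pm I$ with $A\neq\pm I$ is reducible but admits the normal form.
\end{itemize}
Your proof only uses that \emph{some} eigenvector $v$ of $B$ is not an eigenvector of $A$. This condition is also necessary, since in the normal form $e_1$ is such a vector. So that is the sharp hypothesis for the lemma, and it is strictly weaker than $\operatorname{tr}(ABA^{-1}B^{-1})\neq 2$.
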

\begin{proof}
    This splits into two cases. $A$ diagonalizable or not. If $A$ is diagonalizable, we assume that $A = \mathrm{diag(\mu, \mu^{-1})}$, such that $\mu \neq \mu^{-1}$. Consider  
    \begin{align*}
        C=\begin{pmatrix}
            0 & -x^{-1} \\
            x & y \\
        \end{pmatrix}
    \end{align*}
    and compute its action by conjugation on 
    A and B
    \begin{align*}
        CAC^{-1}= \begin{pmatrix}
           \mu^{-1} & 0 \\
            xy(\mu-\mu^{-1}) &  \mu
        \end{pmatrix}\ , \quad  
      CBC^{-1}=  \begin{pmatrix}
           -x^{-1}yc + d & x^{-2}c \\
           xya +y^2c - x^2b - xyd& a+ x^{-1}yc
        \end{pmatrix}
    \end{align*}
    To set the bottom left corner of the first matrix to 1, we want $x= y^{-1}(\mu - \mu^{-1})^{-1}$.
    By substituting the $x$, the bottom left corner is equal to 
    \begin{align*}
        cy^2 + \frac{a-d}{\mu-\mu^{-1}} - \frac{b}{\mu-\mu^{-1}}y^{-2},
    \end{align*}
     which we want to set to zero. So we can solve a polynomial of degree 4, to get an upper triangular matrix for $B$ and a lower triangular matrix $A$, by simultaneous conjugation. \\
     If $A$ not diagonalizable, then it must admit Jordan normal form 
     \begin{align}
        A = \begin{pmatrix*}
            \mu & 1 \\
            0 & \mu
        \end{pmatrix*}.
     \end{align}
     As $A \in \operatorname{SL}(2,\mathbb{C})$, we have $\mu=\pm 1$, which is a case we can treat by hand. Consider 
     \begin{align*}
        \begin{pmatrix}
        x & y \\
        0 & x^{-1} 
        \end{pmatrix} 
        \begin{pmatrix}
             \pm 1 & 1 \\
             0 & \pm 1
         \end{pmatrix}
         \begin{pmatrix}
             x^{-1} & -y \\
             0 & x
         \end{pmatrix} = 
         \begin{pmatrix}
             \pm1 & x^2 \\
             0 & \pm1 
         \end{pmatrix}
     \end{align*}
     On the other hand for the arbitrary matrix we get 
     \begin{align*}
         \begin{pmatrix}
             x & y \\
             0 & z 
             \end{pmatrix} 
             \begin{pmatrix}
                  a & b \\
                  c & d
              \end{pmatrix}
              \begin{pmatrix}
                  z & -y \\
                  0 & x
              \end{pmatrix} = 
              \begin{pmatrix}
                  z(xa+yc) & -y(xa+yc) + x(xb+yd) \\
                  z^2c & -yzc+xzd 
              \end{pmatrix}
     \end{align*}
     Also note that the reflection matrix acts on upper/lower triangular matrices as
     \begin{align*}
         \begin{pmatrix}
             0 & -1 \\
             1 & 0 
             \end{pmatrix} 
             \begin{pmatrix}
                  a &b \\
                  0 & d
              \end{pmatrix}
              \begin{pmatrix}
                  0 & -1 \\
                  1 & 0
              \end{pmatrix} = 
              \begin{pmatrix}
                  a & 0 \\
                  -b & d 
              \end{pmatrix}
     \end{align*}
     and 
     \begin{align*}
         \begin{pmatrix}
             0 & -1 \\
             1 & 0 
             \end{pmatrix} 
             \begin{pmatrix}
                  a &0 \\
                  c & d
              \end{pmatrix}
              \begin{pmatrix}
                  0 & -1 \\
                  1 & 0
              \end{pmatrix} = 
              \begin{pmatrix}
                  a & -c \\
                  0 & d 
              \end{pmatrix}
     \end{align*}
     First, we set $x=i$, then we choose a solution for the equation $-y(xa+yc) + x(xb+yd)$, which we can do over $\mathbb{C}$. We multiply by rotation matrix, which corresponds to rotating by $\frac{\pi}{2}$. Thus, we obtain the pair 
     \begin{align*}
         A =\begin{pmatrix}
             1 & 0 \\
             1 & 1 
         \end{pmatrix} \quad , \quad 
         B=\begin{pmatrix}
             z(xa+yc) & -z^2c \\
             0 & -yzc+xzd 
         \end{pmatrix},
     \end{align*}
     which is in the desired form.
\end{proof}
\begin{remark}
    There exists an analogous result for $\operatorname{OSp}(1|2)$, which will be proven later.
\end{remark}
We are now ready to give a simple proof of the theorem mentioned in the introduction.
\begin{theorem}
    Let $A, B \in \operatorname{SL}(2,\mathbb{C})$ then all conjugation-invariant polynomial functions $f:  SL(2,\mathbb{C}) \times SL(2,\mathbb{C}) \rightarrow \mathbb{C}$ are generated by $\operatorname{tr}(A), \operatorname{tr}(B), \operatorname{tr}(AB)$.
\end{theorem}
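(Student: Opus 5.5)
Since the paper avoids Cayley--Hamilton, the plan is to use Lemma~\ref{triaSL} as a slice for the conjugation action. Let $f$ be a conjugation-invariant polynomial on $\operatorname{SL}(2,\mathbb{C})\times\operatorname{SL}(2,\mathbb{C})$.

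\textbf{Step 1: restrict $f$ to the slice.} By Lemma~\ref{triaSL}, $f$ is determined by its values on pairs
\begin{align*}
A_0=\begin{pmatrix}\mu&0\\1&\mu^{-1}\end{pmatrix},\qquad B_0=\begin{pmatrix}\lambda&\kappa\\0&\lambda^{-1}\end{pmatrix}.
\end{align*}
So $f(A_0,B_0)=F(\mu,\mu^{-1},\lambda,\lambda^{-1},\kappa)$ for some Laurent polynomial $F$ in $\mu,\lambda$ and polynomial in $\kappa$.

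\textbf{Step 2: compute the traces on the slice.} We have
\begin{align*}
\operatorname{tr}A_0=\mu+\mu^{-1},\qquad \operatorname{tr}B_0=\lambda+\lambda^{-1},\qquad \operatorname{tr}(A_0B_0)=\mu\lambda+\mu^{-1}\lambda^{-1}+\kappa.
\end{align*}
Hence $\kappa=\operatorname{tr}(AB)-\mu\lambda-\mu^{-1}\lambda^{-1}$. Substituting, $F$ becomes a Laurent polynomial $G(\mu,\lambda,t)$ with $t=\operatorname{tr}(AB)$, polynomial in $t$.

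\textbf{Step 3: symmetry of $G$.} The pairs for parameters $(\mu,\lambda,\kappa)$ and $(\mu^{-1},\lambda^{-1},\kappa')$ with the same traces are conjugate when generic. One sees this by conjugating with the Weyl element $\begin{pmatrix}0&-1\\1&0\end{pmatrix}$ together with a diagonal/unipotent correction; alternatively, generic pairs with equal traces are conjugate by a direct argument (both are irreducible). Therefore each coefficient of $t^k$ in $G$ is invariant under the simultaneous inversion $(\mu,\lambda)\mapsto(\mu^{-1},\lambda^{-1})$.

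I also need invariance under inverting $\mu$ alone, and under inverting $\lambda$ alone. The cleanest route is to show directly that the value of $f$ depends only on the three traces on a Zariski-dense set. Take the set where $A$ has distinct eigenvalues and the pair is irreducible. There, conjugate $A$ to $\operatorname{diag}(\mu,\mu^{-1})$. Then $B=\begin{pmatrix}a&b\\c&d\end{pmatrix}$ with $bc\neq0$, and after conjugating by the diagonal torus, $b=1$. Now $a+d$ and $\mu a+\mu^{-1}d$ determine $a,d$ since $\mu\neq\mu^{-1}$, and $ad-c=1$ determines $c$. So $f$ factors as $f=h(\operatorname{tr}A,\operatorname{tr}B,\operatorname{tr}AB)$ on this dense set, for some function $h$ on $\mathbb{C}^3$.

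\textbf{Step 4: polynomiality, the main obstacle.} It remains to show that $h$ is a polynomial, not just a regular function on an open set. From the slice, $h(\mu+\mu^{-1},\lambda+\lambda^{-1},t)=G(\mu,\lambda,t)$. This expression is polynomial in $t$ and a Laurent polynomial in $\mu,\lambda$ that is invariant under $\mu\mapsto\mu^{-1}$ and $\lambda\mapsto\lambda^{-1}$ separately. The separate invariance holds because $h$ depends only on $\mu+\mu^{-1}$ and $\lambda+\lambda^{-1}$.

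By the classical fact that $\mathbb{C}[\mu^{\pm1}]^{\mathbb{Z}/2}=\mathbb{C}[\mu+\mu^{-1}]$, proved by induction on degree using $\mu^n+\mu^{-n}$, the function $h$ is a polynomial in the three traces. Applied twice, this gives the same conclusion in two variables.

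Finally, $f$ and $h(\operatorname{tr}A,\operatorname{tr}B,\operatorname{tr}AB)$ agree on a dense set, so they are equal.
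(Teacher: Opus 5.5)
Your proof is correct up to some density bookkeeping, and it takes a genuinely different route from the paper's.

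\textbf{How the paper argues.} The paper also restricts an invariant to the slice of Lemma~\ref{triaSL}. It then identifies the slice coordinate ring with $\mathbb{C}[X,Y,Z,\Delta_X,\Delta_Y]$, where $\Delta_X=\sqrt{X^2-4}$ and $\Delta_Y=\sqrt{Y^2-4}$. It projects onto $\mathbb{C}[X,Y,Z]$ and argues that the resulting map is bijective by expanding in monomials and comparing coefficients. It never separately checks that the restriction of an invariant has no component odd in $\Delta_X$ or $\Delta_Y$. Its injectivity step writes the slice coefficients $\beta_{rst}$ as sums of the $\alpha_{ijk}$, and this already presupposes that the restricted invariant is a polynomial in $X,Y,Z$.

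\textbf{What your route adds.} Your Step 3 supplies exactly this missing ingredient. On the dense set $U$ of irreducible pairs with $\operatorname{tr}(A)^2\neq 4$, the three traces separate orbits. Hence the restricted function is invariant under inverting the eigenvalue parameter of $A$ and of $B$ separately; in the paper's coordinates these are the sign changes $\Delta_X\mapsto-\Delta_X$ and $\Delta_Y\mapsto-\Delta_Y$. Your Step 4 is then the computation that the fixed ring of this $\mathbb{Z}_2\times\mathbb{Z}_2$ is $\mathbb{C}[X,Y,Z]$. So your route costs an orbit-separation argument plus density, but it closes the step that the paper's formal coefficient comparison leaves implicit.

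\textbf{Points to tighten.}
\begin{enumerate}
\item Lemma~\ref{triaSL} fails for some pairs, e.g.\ $A=\pm I$, or $A$ and $B$ both diagonal and non-scalar. So Step 1 should only be asserted on $U$, where it does hold: take $v_1$ an eigenvector of $B$ and $v_2$ an eigenvector of $A$. Irreducibility makes them independent and makes the $(2,1)$ entry of $A$ in this basis nonzero.
\item In Step 4 you need the slice points lying in $U$ to be Zariski-dense in parameter space. They are, since the excluded locus is $\mu^2=1$ or $\kappa\bigl(\kappa-(\mu-\mu^{-1})(\lambda^{-1}-\lambda)\bigr)=0$.
\item Rather than passing through the set function $h$, which is only defined on the trace image of $U$, argue directly. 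Once $G=P(\mu+\mu^{-1},\lambda+\lambda^{-1},t)$ as Laurent polynomials, $f$ and $P(\operatorname{tr}A,\operatorname{tr}B,\operatorname{tr}AB)$ are invariant polynomials agreeing identically on the slice. Hence they agree on all of $U$, and therefore everywhere.
\item The first paragraph of Step 3 is superseded by the second and can be dropped.
\end{enumerate}
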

\begin{proof}
    Consider the coordinate ring of $SL(2,\mathbb{C}) \times SL(2,\mathbb{C}) $ denoted by $\mathbb{C}[G \times G]$ and its subring of $G$-invariant polynomial functions, denoted by $\mathbb{C}[H]$. We want to prove 
    \begin{align*}
        \mathbb{C}[H] \xlongrightarrow{\cong} \mathbb{C}[X,Y,Z],
    \end{align*}
    where $X=\operatorname{tr}(A)$, $Y=\operatorname{tr}(B)$ and $Z=\operatorname{tr}(AB)$.
    First of all note that if $f \in \mathbb{C}[H]$, then $f$ is a polynomial function in the entries of the two matrices $(\xi, \eta) \in SL(2,\mathbb{C}) \times SL(2,\mathbb{C})$. We will denote it $f(A, B)$. Furthermore, by the triangulation lemma we can bring them into the form
    \begin{align*}
        A=\begin{pmatrix}
            \lambda & \kappa \\
            0 & \lambda^{-1}
         \end{pmatrix} \quad , \quad 
         B=\begin{pmatrix}
            \mu & 0 \\
            1 & \mu^{-1}
         \end{pmatrix}
    \end{align*}
    This parametrization means, that we can view $f \in \mathbb{C}[\lambda, \lambda^{-1},\mu,\mu^{-1}, \kappa]$, which makes $\mathbb{C}[H]$ into a subring by the obvious inclusion map
    \begin{align*}
        \mathbb{C}[H] \rightarrow \mathbb{C}[\lambda, \lambda^{-1},\mu,\mu^{-1}, \kappa].
    \end{align*} 
    We can also define the map 
    \begin{align*}
        \mathbb{C}[\lambda, \lambda^{-1},\mu,\mu^{-1}, \kappa] &\rightarrow \mathbb{C}[X,Y,Z,\Delta_X,\Delta_Y] \\
       \lambda &\mapsto \frac{X+ \Delta_X}{2} \\
       \lambda^{-1} &\mapsto \frac{X- \Delta_X}{2} \\
       \mu & \mapsto \frac{Y+ \Delta_Y}{2} \\
       \mu^{-1} & \mapsto \frac{Y- \Delta_Y}{2} \\
       \kappa & \mapsto Z-\frac{X+ \Delta_X}{2}\frac{Y+ \Delta_Y}{2} - \frac{X- \Delta_X}{2}\frac{Y- \Delta_Y}{2},
    \end{align*}
where $\Delta_X = \sqrt{X^2-4}$ and $\Delta_Y = \sqrt{Y^2-4}$. We can easily see that 
\begin{align*}
    \mathbb{C}[\lambda+\lambda^{-1}, \mu + \mu^{-1}, \lambda \mu + \lambda^{-1}\mu^{-1}+ \kappa] \xlongrightarrow{\cong}  \mathbb{C}[X,Y,Z]
\end{align*}
If we now consider a polynomial $f \in \mathbb{C}[X,Y,Z] \subset  \mathbb{C}[\lambda, \lambda^{-1},\mu,\mu^{-1}, \kappa]$, it is easy to see it is $G$-invariant, thus $f \in \mathbb{C}[H]$. Notice that 
\begin{align*}
    \lambda - \lambda^{-1} &\mapsto \Delta_X \\
    \mu - \mu^{-1} &\mapsto \Delta_Y.
\end{align*}
We observe that the statement is even stronger, we have 
\begin{align*}
    \mathbb{C}[\lambda, \lambda^{-1},\mu,\mu^{-1}, \kappa] &\xlongrightarrow{\cong} \mathbb{C}[X,Y,Z,\Delta_X,\Delta_Y].
\end{align*}
Now we define a map $\psi$, by the composition of the solid arrows in the following diagram
\begin{center}
    \begin{tikzcd}
        \mathbb{C}[H] \arrow[r,hook] \arrow[d, dashed, "\eqqcolon \psi"]& \mathbb{C} [\lambda, \lambda^{-1},\mu,\mu^{-1}, \kappa] \arrow[d] \\ 
        \mathbb{C}[X,Y,Z] & \mathbb{C}[X,Y,Z,\Delta_X,\Delta_Y] \arrow[l, two heads]
    \end{tikzcd},
\end{center}
which is surjective as argued earlier. We now prove it is injective. We again make use of our identifications
\begin{center}
    \begin{tikzcd}
        \mathbb{C}[H] \arrow[r,hook] \arrow[d, dashed]& \mathbb{C} [\lambda, \lambda^{-1},\mu,\mu^{-1}, \kappa] \arrow[d, "\operatorname{id}"] \\
       \mathbb{C}[X,Y,Z]   \arrow[r,hook]& \mathbb{C} [\lambda, \lambda^{-1},\mu,\mu^{-1}, \kappa]
    \end{tikzcd}.
\end{center}
Now consider the image of a $G$-invariant polynomial $g$. We have $\psi(g(\xi,\eta))=f(X,Y,Z) = 0$. This is equivalent to
\begin{align*}
    \sum_{ijk}\alpha_{ijk}X^iY^jZ^k=\sum_{ijk} \alpha_{ijk} (\lambda+\lambda^{-1})^i(\mu+\mu^{-1})^j(\lambda \mu + \lambda^{-1}\mu^{-1}+ \kappa)^k = 0,
\end{align*}
which we expand in the form 
\begin{align*}
    \sum_{ijk} \alpha_{ijk}\sum_{l}^{i}\sum_{m}^{j}\sum_{k_1+k_2+k_3=n}^{k}\binom{i}{l}\binom{j}{m}\binom{k}{k_1,k_2,k_3}\lambda^{i-2l+k_1-k_2}\mu^{j-2m+k_1-k_2}\kappa^{k_3}
\end{align*}
On the other hand, we have the $G$-invariant polynomial 
\begin{align*}
    \sum_{ijkpq} \beta_{ijkpq}\lambda^{i-j}\mu^{k-p}\kappa^q  = 0,
\end{align*}
which after relabeling turns into 
\begin{align*}
    \sum_{rst} \beta_{rst}\lambda^{r}\mu^{s}\kappa^t  = 0.
\end{align*}
We observe that every $\beta_{rst}$ is a sum of the $\alpha_{ijk}$ 
By our condition, we know that all coefficients $\alpha_{ijk}$ must vanish, since there is no relation between $X,Y,Z$. Therefore  $\beta_{rst}=0$. Thus $g =0$ and injectivity follows. In total we now have 
\begin{align*}
    \mathbb{C}[G \times G]^G=\mathbb{C}[H] \cong \mathbb{C}[X,Y,Z],
\end{align*}
and on the level of spaces we have 
\begin{align*}
    \operatorname{Hom}(F_2,SL(2,\mathbb{C}))//SL(2,\mathbb{C}) \cong \mathbb{C}^3.
\end{align*}
\end{proof}
\section{Preliminaries}
\subsection{Supergeometry}
Here we want to introduce basic notions of supergeometry. In particular, we will provide a categorical viewpoint, alongside its classical definitions, thereby highlighting the similarities between commutative algebras and supercommutative algebra. This should aid the reader to understand which phenomena will directly carry over and in which situations some more care ought to be given. A possible way to view the category of commutative rings is that it is the category of commutative monoid objects with respect to the tensor product in the symmetric monoidal category $\mathrm{Ab}$. To do supercommutative algebra and eventually supergeometry, the only shift that will occur from the categorical perspective is that we replace the category $\mathrm{Ab}$ with $\mathrm{Ab}_{\mathbb{Z}_2}$. In practice, all objects will be endowed with a $\mathbb{Z}_2$-grading and a Koszul sign rule. Unless otherwise note, the definitions in this section follow \cite{bruzzo2023notes}. 
\begin{definition}
    Let $\mathrm{Ab}_{\mathbb{Z}_2}$ be the category of abelian groups with a $\mathbb{Z}_2$-grading, which we equip with the symmetric monoidal structure given by the braiding isomorphism 
    \begin{align}
    \begin{split}
                M \otimes N &\longrightarrow N \otimes M \\
        m \otimes n &\longmapsto (-1)^{|m||n|}n \otimes m,
    \end{split}
    \end{align}
    where $|m|$ denotes the degree of the element.
\end{definition}
\begin{remark}
    We will be using both $\mathrm{deg}(-)$ and $|\cdot|$ to denote the degree map.
\end{remark}
\begin{definition}[\cite{westra2009superrings}]
A \textit{superring} $A$ is a $\mathbb{Z}_2$-graded ring $A = A_{0} \oplus A_{1}$ such that the product map $A \times A \to A$ satisfies $A_i A_j \subset A_{i+j}$. A morphism of superrings is a $\mathbb{Z}_2$-grading preserving morphism of rings. The elements of $A_{0}$ are called \textit{even}, the elements of $A_{1}$ are called \textit{odd} and an element that is either even or odd is said to be \textit{homogeneous}. 
\end{definition}
\begin{remark}
    Equivalently, we can define a superring as a commutative monoid object with respect to the symmetric monoidal structure in the category $\mathrm{Ab}_{\mathbb{Z}_2}$.
\end{remark}
\begin{example}
    Let $k$ be a field and $\{e_1,...,e_n\}$ be a set. We can construct the free odd superalgebra by 
    \begin{align}
        \Lambda_n \coloneqq \bigwedge(e_1,..,e_n).
    \end{align}
    This algebra is called \textit{Graßmann} algebra.
\end{example}

\begin{definition}
    A \textit{locally ringed superspace} is a pair $X = (|X|, \mathcal{O}_X)$, where $|X|$ is a topological space and $\mathcal{O}_X$ is a sheaf of superrings, such that for every point $x \in |X|$ $\mathcal{O}_{X,x}$ is a local superring.
\end{definition}
\begin{definition}
    We define the category of affine superschemes to be 
    \begin{align}
        \mathrm{sAff} \coloneqq \mathrm{SRings}^{\mathrm{op}}.
    \end{align}
\end{definition}
\begin{definition}
     A \textit{superscheme} is a locally ringed superspace, which is locally
    isomorphic to the superspectrum of a superring.
\end{definition}
\begin{remark}
   We can interpret the odd coordinates geometrically as fat points. Let $A$ be a superring, then the odd elements $\theta \in A_1$ satisfy the relation $\theta^2 = 0$. Let us now consider a superalgebra generated by a single degree 1 element over $\mathbb{C}$, i.e. $\mathbb{C[\theta]}$. We observe that if we forget the $\mathbb{Z}_2$-grading, there is an isomorphism 
   \begin{align}
      \mathbb{C[\theta]} \cong  \mathbb{C}[\epsilon]/\epsilon^2.
   \end{align}
By this analogy, we can think of odd coordinates geometrically as thickening of the point or a first order Taylor expansion around the origin. Moreover, for a general superring $A$ we can also observe that, as topological spaces, we have a homeomorphism \
\begin{align}
    \mathrm{Spec}(A) \cong \mathrm{Spec}(A/J).
\end{align}
\end{remark}
\begin{definition}
    Let $X$ be a superscheme, then we denote the sheaf generated by all degree 1 nilpotents by $J_X$. A superscheme X is called \textit{projected} if the structural sequence 
    \begin{center}
        \begin{tikzcd}
            0 \arrow[r] & J_X \arrow[r] & \mathcal{O}_X \arrow[r] & \mathcal{O}_X / J_X \arrow[r] & 0
        \end{tikzcd}
    \end{center}
    splits.
\end{definition}
\begin{remark}
    In the category of supermanifolds, it turns out that every real supermanifold is graded in a non-natural way, due to Batchelor's theorem \cite{batchelor1979structure}. So if we want to look for non-projected objects, we must look at complex supermanifolds.
\end{remark}
\begin{definition}
    Let X be a superscheme of odd dimension m, then there exists a $J_X$-adic filtration of $\mathcal{O}_X$ \\
    \begin{align}
        J_X^0 \supset J_X^1 \supset ... \supset J_X^{m+1} = 0.
    \end{align}
    We can now define the sheaf 
    \begin{align}
        \mathrm{Gr}(\mathcal{O}_X) \coloneqq J_X^0/J_X^1 \oplus J_X^1/J_X^2 \oplus ... \oplus J_X^{m-1}/J_X^m \oplus J^m_X
    \end{align}
    The graded superscheme associated to X is defined as
    \begin{align}
        \mathrm{Gr}(X) \coloneqq (|X|,\mathrm{Gr}(\mathcal{O}_X)).
    \end{align}
\end{definition}
\begin{definition}
    A superscheme $X$ is called \textit{graded}, if there exists an isomorphism   $X \cong \mathrm{Gr}(X)$.
\end{definition}
\begin{remark}
    The terminology in supergeometry is not yet set in stone. Some authors refer to this property as split \cite{bruzzo2023notes}, \cite{vishnyakova2011complex} and some authors call this property being a graded supermanifold \cite{sherman2021spherical}. We remind the reader to always carefully check, what is meant by the term graded or split when encountering supergeometry. It is also an unfortunate fact that all of these words are very overused in a multitude of contexts.
\end{remark}
\subsection{Algebraic Supergroups}
Here we will introduce algebraic supergroups, following \cite{masuoka2017algebraic} and \cite{milne2017algebraic}, the analogue of algebraic groups in supergeometry. While algebraic supergroups frequently arise in the context of supersymmetric physical models, their fundamental mathematical significance is perhaps best illustrated by Deligne's theorem on tensor categories.
\begin{theorem}[\cite{deligne2002categories}]
Every $k$-tensor category $\mathcal{A}$ such that:
\begin{enumerate}[label=(\arabic*)]
    \item $k$ is an algebraically closed field of characteristic zero.
    \item $\mathcal{A}$ is of subexponential growth.
\end{enumerate}
is a neutral super Tannakian category, and there exists:
\begin{enumerate}[label=(\arabic*)]
    \item an affine algebraic supergroup $G$ whose algebra of functions $\mathcal{O}(G)$ is a finitely generated $k$-algebra.
    \item a tensor-equivalence of categories
    \[ \mathcal{A} \simeq \operatorname{Rep}(G, \epsilon) \]
    between $\mathcal{A}$ and the category of finite-dimensional representations of $G$.
\end{enumerate}
\end{theorem}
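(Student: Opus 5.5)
Since this is Deligne's theorem, quoted from \cite{deligne2002categories}, I would not try to reprove it from the paper's own material. I would instead follow Deligne's strategy in three stages: (i) turn the growth hypothesis into an algebraic finiteness condition, (ii) build a super fiber functor $\omega\colon \mathcal{A}\to \mathrm{sVect}_k$, and (iii) apply super Tannakian reconstruction to get $G$ and $\epsilon$.

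\textbf{Stage (i).} Fix an object $X$. Since $\operatorname{char} k = 0$, the symmetric group $S_n$ acts on $X^{\otimes n}$ through the braiding, and $k[S_n]$ is semisimple. This gives the isotypic decomposition
\begin{align*}
X^{\otimes n}\cong \bigoplus_{|\lambda|=n} S_\lambda(X)\otimes V_\lambda ,
\end{align*}
where the $S_\lambda$ are Schur functors and the $V_\lambda$ are the Specht modules. Subexponential growth gives $\operatorname{length}(X^{\otimes n})\le C^n$. The key point is that $\dim V_\lambda$ grows faster than any exponential when $\lambda$ contains a large square. Comparing the two forces $S_\lambda(X)=0$ for every $\lambda$ containing some $(p+1)\times(q+1)$ rectangle. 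In other words, $X$ is Schur-finite and "of type $(p|q)$". As a consequence its categorical dimension $\dim X=\operatorname{tr}(\mathrm{id}_X)$ is an integer, namely $p-q$.

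\textbf{Stage (ii).} This is the step I expect to be the main obstacle. I would first reduce to the case where $\mathcal{A}$ has a tensor generator $X$, replacing $X$ by $X\oplus X^\vee$; the general case then follows as a filtered colimit. The aim is a commutative algebra $R$ in $\operatorname{Ind}(\mathcal{A})$ with $\operatorname{Hom}(\mathbf 1,R)\neq 0$ over which $X$ splits: $R\otimes X\cong R^{\,p}\oplus \Pi R^{\,q}$ as $R$-modules, with $\Pi$ the parity shift.

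I would build $R$ in steps. At each step I adjoin the universal algebra that splits off one even line (the one corresponding to $\Lambda^{\bullet}$) or one odd line (the one corresponding to $\operatorname{Sym}^{\bullet}$). Schur-finiteness, specifically the vanishing of the rectangle Schur functor, is what guarantees that each step keeps $R\neq 0$ and that the process stops after $p+q$ steps.

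Next I would take a maximal ideal of $R$, meaning a maximal ideal in the sense of quotient algebra objects in $\operatorname{Ind}(\mathcal{A})$. Because $k$ is algebraically closed and $\operatorname{End}(\mathbf 1)=k$, the quotient $\bar R$ has $\operatorname{Hom}(\mathbf 1,\bar R)=k$. The functor $Y\mapsto \operatorname{Hom}(\mathbf 1,\bar R\otimes Y)$ should then be an exact, faithful, $k$-linear tensor functor into $\mathrm{sVect}_k$. Checking exactness and faithfulness, and in particular that the nonzero algebra $\bar R$ really has $k$ as its invariants, is the delicate part. Here I would follow Deligne's argument closely.

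\textbf{Stage (iii).} With $\omega$ in hand, set $G=\underline{\operatorname{Aut}}^{\otimes}(\omega)$. Its coordinate ring is the coend $\mathcal{O}(G)=\int^{Y}\omega(Y)^\vee\otimes\omega(Y)$, which is a commutative Hopf superalgebra. The standard Tannakian argument, carried out with the Koszul sign rule from the definition of $\mathrm{Ab}_{\mathbb Z_2}$, shows that $\omega$ induces a tensor equivalence $\mathcal{A}\simeq\operatorname{Rep}(G)$. The parity element $\epsilon\in G(k)$ is the automorphism acting by $(-1)^{|v|}$ on each $\omega(Y)$. It is central and acts compatibly with the braiding, so we land in $\operatorname{Rep}(G,\epsilon)$.

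Finally, if $\mathcal{A}$ is tensor-generated by one object $X$, then $\mathcal{O}(G)$ is generated by the matrix coefficients of $\omega(X)$ and of $\omega(X)^\vee$. It is therefore a finitely generated $k$-algebra, which is the finite-generation claim in the statement. In general $G$ is the inverse limit of such algebraic quotients.
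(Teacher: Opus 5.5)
The paper gives no proof of this statement; it is simply quoted from Deligne. So the benchmark is Deligne's own argument, and your outline has its overall shape: growth $\Rightarrow$ Schur-finiteness $\Rightarrow$ a nonzero splitting algebra in $\operatorname{Ind}(\mathcal{A})$ $\Rightarrow$ a simple quotient $\Rightarrow$ a fibre functor $\Rightarrow$ reconstruction.

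\textbf{The genuine gap: neutrality over $k$.} You claim that because $k$ is algebraically closed and $\operatorname{End}(\mathbf 1)=k$, the quotient $\bar R=R/\mathfrak m$ satisfies $\operatorname{Hom}(\mathbf 1,\bar R)=k$. Simplicity of $\bar R$ only shows that $K:=\operatorname{Hom}(\mathbf 1,\bar R)$ is a field containing $k$: multiplication by a nonzero global section has kernel and image that are ideals, so it is an isomorphism. The two facts you cite do not force $K=k$. For example, the constant algebra $k(t)\otimes\mathbf 1\in\operatorname{Ind}(\mathcal A)$ is commutative, has no nontrivial ideals, and has $\operatorname{Hom}(\mathbf 1,k(t)\otimes\mathbf 1)=k(t)$.

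What Stage (ii) actually produces is a super fibre functor $Y\mapsto\operatorname{Hom}(\mathbf 1,\bar R\otimes Y)$ with values in super vector spaces over $K$. Even this requires that every $\bar R\otimes Y$, not just $\bar R\otimes X$, is free over $\bar R$; otherwise the functor is not monoidal, and this again uses simplicity of $\bar R$.

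Passing from $K$ down to $k$ is a separate step, the super analogue of ``a Tannakian category over an algebraically closed field is neutral.'' It must combine a finiteness statement (for $\mathcal A$ generated by one object, the relevant torsor or gerbe of fibre functors is of finite type) with the Nullstellensatz. For general $\mathcal A$ it then needs a Zorn/limit argument choosing compatible fibre functors on the finitely generated subcategories, which your phrase ``the general case follows as a filtered colimit'' passes over. Without this step you only know that $\mathcal A$ is super Tannakian, not that it is neutral.

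\textbf{Smaller inaccuracies.}
\begin{enumerate}[label=(\arabic*)]
\item In Stage (i), vanishing of $S_{(q+1)^{p+1}}(X)$ does not give $\dim X=p-q$. From $\dim S_\lambda(X)=\prod_{x\in\lambda}(d+c(x))/h(x)$ you only get $d\in\{-q,\dots,p\}$; for instance $\mathbf 1$ is killed by $S_{(3,3,3)}$. The true type $(p'|q')$, with $p'\le p$ and $q'\le q$, only emerges from the splitting, which takes at most $p+q$ steps.
\item In Stage (ii), the rectangle is not what keeps $R\neq 0$; it is what makes the induction terminate. By Pieri, splitting off an even (resp.\ odd) line removes a row (resp.\ column) from the killing rectangle.
\item Nonvanishing comes from choosing the parity correctly at each step. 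The even-line algebra $\mathrm{Sym}(X'\oplus X'^{\vee})/(\mathrm{coev}-1)$ is nonzero iff the coevaluation section is not nilpotent, i.e.\ iff $\mathrm{Sym}^N X'\neq 0$ for all $N$. The odd-line algebra is nonzero iff $\Lambda^N X'\neq 0$ for all $N$. One of these holds whenever $X'\neq 0$: otherwise $X'^{\otimes n}=0$ for large $n$, and a dualizable object with a vanishing tensor power is zero. So your parenthetical pairing of even lines with $\Lambda^\bullet$ and odd lines with $\mathrm{Sym}^\bullet$ is backwards for this criterion.
\item In Stage (iii), $\epsilon$ is central only in the underlying even group. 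On $G$ itself $\operatorname{int}(\epsilon)$ is the parity automorphism, and that, not centrality, is the condition defining $\operatorname{Rep}(G,\epsilon)$.
\end{enumerate}

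\textbf{Finite generation.} Your last sentence states the finiteness claim correctly, and necessarily so. For arbitrary $\mathcal A$ the paper's clause that $\mathcal O(G)$ is finitely generated is false. For example, $\operatorname{Rep}(\prod_{n\in\mathbb N}\mathbb G_m)$ has subexponential growth but is not finitely tensor-generated, whereas $\operatorname{Rep}(G,\epsilon)$ with $\mathcal O(G)$ finitely generated is. So no argument can deliver that clause in the stated generality.
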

This theorem roughly states that under mild conditions every tensor category can be realized as the category of representations of some algebraic supergroup.\\
In the standard literature, schemes are described as locally ringed spaces, whose structure sheaf encodes the algebraic data. Similarly, algebraic groups can be studied as locally ringed space. To make the group axioms compatible with the structure sheaf, we formulate the group axioms as a collection of commutative diagrams. This procedure results in the following
\begin{definition}
    Let $G$ be an $S$-superscheme and consider $m: G \times G \rightarrow G$ to be a morphism of schemes. Furthermore, if there exist morphisms of superschemes 
    \begin{align*}
        e: * \rightarrow G \quad , \quad \operatorname{inv}: G \rightarrow G
    \end{align*}
    such that the following diagrams commute
    \begin{center}
    \begin{tikzcd}
        G \times G \times G  \arrow[r, "\operatorname{id} \times m"] \arrow[d,"m \times \operatorname{id}"] & G \times G \arrow[d, "m"]\\ 
        G \times G \arrow[r,"m"] & G
    \end{tikzcd}
    \begin{tikzcd}
        * \times G \arrow[r,"e \times \operatorname{id}"] \arrow[rd, "\cong", swap] & G \times G  \arrow[d,"m"] & G \times * \arrow[l, "\operatorname{id} \times e", swap] \arrow[ld, "\cong"] \\
         & G &
    \end{tikzcd}
\end{center}
\begin{center}
     \begin{tikzcd}
        G \arrow[r,"{(\operatorname{inv}, \operatorname{id})}"] \arrow[d,]& G \times G \arrow[d,"m"]& \arrow[l, "{(\operatorname{id}, \operatorname{inv})}", swap] G \arrow[d] \\
        * \arrow[r, "e"]&   G  & \arrow[l, "e", swap] *
    \end{tikzcd},
\end{center}
    then we call $G$ an $S$-\textit{group superscheme}, where we take the fiber products over $S$. If G is a $k$-superscheme, we call it an \textit{algebraic supergroup}.
\end{definition}
\begin{remark}
    We notice that since an algebraic supergroup is defined by limit diagrams (in this case pullbacks), that whenever we have a limit preserving functor, that group objects will be sent to group objects.
\end{remark}
Let $\mathcal{C}$ be an arbitrary category with all small limits, then we call an object $G \in \mathcal{C}$ that satisfies all these conditions a group object. Since the category of superschemes is complete, we obtain, by cheating, an equivalent definition.
\begin{definition}
    An \textit{algebraic supergroup} $G$ is a group object in the category of superschemes.  
\end{definition}
\begin{remark}
    Generally, one denotes the group objects inside of a category $\mathcal{C}$ that contains all small limits by $\operatorname{Grp}(\mathcal{C})$.
\end{remark}
Another standard way to think about superschemes, is in terms of representable functor, in particular, in this setting one can think of an algebraic group as a functor from 
    \begin{align*}
        \mathcal{F} : \operatorname{sSch}^{\operatorname{op}}  \rightarrow \operatorname{Grp},
    \end{align*}
    which we usually call a group functor.
\begin{example}[\cite{deligne2018first}]
    Let $V$ be a super vector space. Then the functor of points of the  associated affine space is 
    \begin{align}
        R \longmapsto \text{even component of } V_R = V \otimes R
    \end{align}
    If we now consider the $R$-points, where $R$ is a superalgebra. Consider a purely odd vector, it is of the form 
    \begin{align}
        v = \sum \alpha_i \otimes e_i.
    \end{align}
\end{example}
\begin{example}
    Consider $SL_2 = \operatorname{Spec}(\mathbb{Z}[a,b,c,d]/(ad-bc-1))$. Then its group functor is given by 
    \begin{align*}
        \operatorname{Hom}_{\operatorname{CRing}}(\mathbb{Z}[a,b,c,d]/(ad-bc-1),-)
    \end{align*}
    We can now compute
    \begin{align*}
        \operatorname{Hom}_{\operatorname{CRing}}(\mathbb{Z}[a,b,c,d]/(ad-bc-1),\mathbb{C}) = SL_2(\mathbb{C})
    \end{align*}
    because the morphisms are determined by the evaluation maps for each variable. The evaluations live inside of $\mathbb{C}$ and have to satisfy the determinant condition. The result is precisely $\operatorname{SL}_2(\mathbb{C})$.
\end{example}
However, for now let us return to the first definition of a group superscheme. Assume $G$ to be an affine $k$-superscheme and apply $\mathrm{Spec}(-)$ to this collection of diagrams, and we obtain the following:
\begin{center}
    \begin{tikzcd}
    k[G] \otimes k[G] \otimes k[G]   & k[G] \otimes k[G]  \arrow[l, "\operatorname{id} \otimes \Delta", swap]\\ 
    k[G] \otimes k[G]  \arrow[u,"\Delta \otimes \operatorname{id}"]  & k[G] \arrow[l, "\Delta"] \arrow[u, "\Delta", swap]
\end{tikzcd}
\end{center}
\begin{center}
    \begin{tikzcd}
    k \otimes k[G]   & k[G]\otimes k[G] \arrow[l,"\epsilon \otimes \operatorname{id}", swap]  \arrow[r, "\operatorname{id} \otimes \epsilon", ]& k[G] \otimes k  \\
     & k[G] \arrow[u,"\Delta"] \arrow[lu, "\cong"] \arrow[ru, "\cong", swap] &
\end{tikzcd}
\end{center}

\begin{center}
    \begin{tikzcd}
    k[G]  &  \arrow[l,"{(\operatorname{S}, \operatorname{id})}", swap]k[G] \otimes k[G] \arrow[r,"{(\operatorname{id}, \operatorname{S})}"]& k[G] \\
    k \arrow[u] &   k[G]  \arrow[l,"\epsilon", swap] \arrow[r, "\epsilon"] \arrow[u,"\Delta"]&  k \arrow[u]
\end{tikzcd}.
\end{center}
So we have a co-group structure on our $k$-superalgebra, which is then called a Hopf superalgebra.
\begin{remark}
By the functor of points viewpoint a superscheme is already determined by its functor on affines, i.e.
\begin{align*}
    \operatorname{sRings} \rightarrow \mathrm{Set}.
\end{align*}
If $G$ is an algebraic group over $k$ and $\Lambda$ a $k$-Graßman algebra, then by functoriality we obtain a map
\begin{align*}
    G(\Lambda) \rightarrow G(k)=G_{red}(k),
\end{align*}
which arises out of the natural quotient map of $\Lambda\rightarrow k$ that sends all odd elements to zero.
\end{remark}
Another important notion in the theory of algebraic supergroups, while not immediately relevant here, is the notion of a Harish-Chandra pair. We choose to write this short section as we believe it is quite helpful in general when thinking about algebraic supergroups. It turns out that this category is equivalent to the category algebraic supergroups \cite{gavarini2016lie}, \cite{masuoka2017algebraic}. 
\begin{definition}
    Suppose $(G_0, \mathfrak{g})$ are respectively a group (analytic or affine algebraic) and a super Lie algebra. Assume that:

\begin{enumerate}
    \item $\mathfrak{g}_0 \simeq \mathrm{Lie}(G_0)$,
    \item $G_0$ acts on $\mathfrak{g}$ and this action restricted to $\mathfrak{g}_0$ is the adjoint representation of $G_0$ on $\mathrm{Lie}(G_0)$. Moreover the differential of such action is the Lie bracket. We shall denote such an action with $\mathrm{Ad}$ or as $g.X$, $g \in G_0$, $X \in \mathfrak{g}$. 
\end{enumerate}
Then $(G_0, \mathfrak{g})$ is called a \textit{super Harish-Chandra pair (SHCP)}.
\end{definition}
Let $\mathrm{SHCP}$ be the category of super Harish-Chandra pairs and $\mathrm{ASP}$ the category of algebraic supergroups then as mentioned earlier we also have the following equivalence of categories 
\begin{align}
    \mathrm{SHCP} \cong \mathrm{ASP}
\end{align}
due to Gavarini \cite{gavarini2016lie} and proven later again over commutative rings by Masuoka and Shibata in \cite{masuoka2017algebraic}.
\begin{remark}
    In \cite{gavarini2016lie} the equivalence is essentially constructed by exponentiating all odd elements of the Lie superalgebra. This is well-defined, because due to the nilpotence of odd elements the exponential will truncate at first order. As an example consider $\mathbb{C}[\theta]$ with $\theta^2=0$ thus we have.
    \begin{align}
        \operatorname{exp}(\theta) = 1 + \theta
    \end{align}
\end{remark}
By the same token, we will also see that the Hopf algebra, that corresponds to the algebraic supergroup, can be decomposed as a superscheme into 
\begin{align}
   k[G] = k[G_0] \otimes \bigwedge \mathfrak{g}_1^\vee
\end{align}
\begin{remark}
    The group structure is not necessarily preserved under this isomorphism, however the co-unit is preserved \cite{masuoka2017algebraic}. 
\end{remark}
The supergroups whose group structure is preserved under the associated graded functor, were also already classified. It turns out it is equivalent to $[\mathfrak{g}_1,\mathfrak{g}_1]=0$ \cite{vishnyakova2011complex}. 
\subsection{\texorpdfstring{$\operatorname{OSp}(1|2)$}{V}}
Let $V \coloneqq \mathbb{R}^{1|2}$ be a super vector space over the Grassmann-algebra $\Lambda$ and $B$ be an even non-degenerate bilinear form. 
\begin{definition}[\cite{lehrer2017first}]
    Consider the vector space $V$ and $B$ as above, then 
    \begin{align}
        \operatorname{OSp}(1|2) \coloneqq  \{g \in \operatorname{GL}(V)| B(gv,gw)=B(v,w) \ \forall v,w \in V\}
    \end{align}
\end{definition}
The group law is given by ordinary matrix multiplication.
Alternatively, we can also describe it as a group preserving a supersymplectic structure $\omega$ on $\mathbb{R}^{2|1}$ \cite{norbury2020enumerative}. The descriptions are equivalent and for now we will proceed with the symplectic description. Generally, we will be working over $\mathbb{C}$ and denote this by $\operatorname{OSp(1|2)}$. 
We can also think of $\operatorname{OSp}(1|2)$ as a group functor from the category of algebraic superschemes to the category of groups. We recover the previous notion of $\operatorname{OSp}(1|2)$-matrices via the $\Lambda$-points of $\mathrm{OSp}(1|2)$, where $\Lambda$ is a $\mathbb{C}$-Graßmann algebra. Explicitly, the $\Lambda$-points are given by matrices of the following form:
\begin{align*}
    \begin{pmatrix*}
        a & b & \alpha\\
        c & d & \beta \\
        \gamma & \delta & f
    \end{pmatrix*}
\end{align*}
with $a,b,c,d,f \in \Lambda_0$ and $\alpha, \beta, \gamma, \delta \in \Lambda_1$.
\begin{remark}
    This viewpoint is implicitly taken in \cite{penner2019decorated}.
\end{remark}
These matrices are also subject to some relations, which can be extracted from the symplectic viewpoint for $\mathrm{OSp}(1|2)$. The symplectic form that has to be preserved can be written as a matrix $J$ of the form: 
\begin{align*}
    J = 
    \begin{pmatrix*}
        0 & -1 & 0 \\
        1 & 0 & 0 \\
        0 & 0 & -1
    \end{pmatrix*},
\end{align*}
which needs to be invariant under the action of $\operatorname{OSp}(1|2)$. We also obtain a formula for the inverse of an element $g$. Let $g \in \operatorname{OSp}(1|2)$, then its inverse is given by 
\begin{align*}
    g^{-1} = J^{-1} g^{st} J,
\end{align*}
which in formulas is
\begin{align*}
    g =
    \begin{pmatrix}
        a & b & \alpha \\
        c & d & \beta \\
        \gamma & \delta & f 
    \end{pmatrix} \quad \quad 
    g^{-1} =
    \begin{pmatrix}
        d & -b & \delta \\
        -c & a & -\gamma \\
        -\beta & \alpha & f 
    \end{pmatrix}.
\end{align*}
By explicit calculation, one can also see that these properties force the relations 
\[
\alpha = b\gamma - a\delta, \quad \beta = d\gamma - c\delta,
\]
\[
\gamma = a\beta - c\alpha, \quad \delta = b\beta - d\alpha,
\]
\[
f = 1 + \alpha\beta, \quad f^{-1} = ad - bc.
\]
onto us and the relation $\operatorname{Ber}(g)=1$ is imposed separately. 
\begin{remark}
    We recall that the functor $(-)_{\mathrm{red}}$ preserves group objects, and one observes that the underlying bosonic group is $\operatorname{SL}_2(-)$. The reduction functor induces a morphism of schemes, which is a closed immersion, i.e. 
\begin{equation}
    \begin{tikzcd}
        \mathrm{SL}_2 \cim & \operatorname{OSp}_{1|2}
    \end{tikzcd}, \quad
    \begin{pmatrix*}
        a & b \\
        c & d 
    \end{pmatrix*} \longmapsto 
    \begin{pmatrix*}
        a & b & 0\\
        c & d  & 0\\
        0 & 0 & 1
    \end{pmatrix*}.
\end{equation}
Let $\iota: k \rightarrow \Lambda$ be the inclusion morphism and $f:\Lambda \rightarrow k$. By functoriality, we obtain the following commutative diagram:
\begin{center}
    \begin{tikzcd}
        \mathrm{SL}_2(\Lambda) \arrow[r] & \mathrm{OSp}_{1|2}(\Lambda) \arrow[r, "f_*"] & \mathrm{OSp}_{1|2}(k)= \mathrm{SL}_2(k) \\
        \mathrm{SL}_2(k) \arrow[u, "\iota_*"] \arrow[r, equal] & \mathrm{OSp_{1|2}(k)} \arrow[ur, equal] \arrow[u, "\iota_*"]
    \end{tikzcd},
\end{center}
and observe the splitting sequence
\begin{center}
    \begin{tikzcd}
        SL_2(k) \cim & OSp_{1|2}(\Lambda) \arrow[r] & SL_2(k),
    \end{tikzcd}
\end{center}
which in formulas is given by  
\begin{align}
\begin{pmatrix*}
        a_\# & b_\# \\
        c_\# & d_\# 
    \end{pmatrix*} \longmapsto
    \begin{pmatrix*}
        a_\# & b_\# & 0\\
        c_\# & d_\# & 0 \\
        0 & 0 & 1
    \end{pmatrix*} \longmapsto
    \begin{pmatrix*}
        a_\# & b_\# \\
        c_\# & d_\# 
    \end{pmatrix*}
\end{align}
\end{remark}

We can once again introduce notions like semisimple or reductive, however being a semisimple algebraic supergroup is a very special phenomenom. The classification of semisimple algebraic supergroups is well known and was originally proven by Hochschild and Djoković \cite{djokovic1976semisimplicity}. Recently Sherman reproved this with more modern techniques in \cite{sherman2021geometricproofsclassificationalgebraic}. We will state Sherman's version.
\begin{theorem}[\cite{sherman2021geometricproofsclassificationalgebraic}]
    Let $G$ be a connected algebraic supergroup. Then $Rep(G)$ is semisimple if and only if
    \begin{align}
        G = K \times \operatorname{OSp}(1|2n_1) \times ... \times \operatorname{OSp}(1|2n_k)
    \end{align}
    for a reductive algebraic group $K$ and positive integers $n_1,...n_k$.
\end{theorem}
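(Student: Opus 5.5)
The plan is to follow Sherman's geometric route. The supergroup side is translated into statements about the super Harish-Chandra pair $(G_0,\mathfrak{g})$ using the equivalence $\mathrm{SHCP}\cong\mathrm{ASP}$. I will use the decomposition $k[G]\cong k[G_0]\otimes\bigwedge\mathfrak{g}_1^\vee$ as a $G_0$-module freely.

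For the forward direction, assume $\operatorname{Rep}(G)$ is semisimple. The first step is to show that $G_0$ is reductive. The intended argument is that restriction $\operatorname{Rep}(G)\to\operatorname{Rep}(G_0)$ and induction $\operatorname{Ind}_{G_0}^G=(-\otimes k[G])^{G_0}$ are both exact. Exactness of induction comes from the decomposition above, which makes $k[G]$ injective relative to $G_0$. Moreover, every $G_0$-module $W$ is a direct summand of $\operatorname{Res}\operatorname{Ind}W$, because the counit splits after averaging over the finite filtration by powers of $\mathfrak{g}_1^\vee$. Hence semisimplicity of $\operatorname{Rep}(G)$ descends to $\operatorname{Rep}(G_0)$, and since $G$ is connected in characteristic zero, $G_0$ is reductive.

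The second step, which I expect to be the heart of the proof and the main obstacle, is to show that no nonzero odd $x\in\mathfrak{g}_1$ has $[x,x]$ nilpotent. The idea is to suppose such an $x$ exists and take the subsupergroup $H$ generated by $x$, whose Lie superalgebra is spanned by $x$ and $[x,x]$. I would first treat the case $[x,x]=0$, where $H\cong\mathbb{G}_a^{0|1}$. In that case $\operatorname{Rep}(H)$ contains the nonsplit extension $k[\theta]$ of $k$ by $\Pi k$, and inducing it up to $G$ should produce a nonsplit extension in $\operatorname{Rep}(G)$. To make "inducing preserves non-splitness" precise, I would use that $G/H$ is affine when $H$ is unipotent, so induction from $H$ is exact and Frobenius reciprocity detects the extension class. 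The case where $[x,x]$ is nonzero but nilpotent would be reduced to the previous one by passing to a suitable unipotent subgroup.

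With this anisotropy condition in hand, the third step is a Lie-theoretic classification. The tentative plan is to decompose $\mathfrak{g}=\mathfrak{g}_0\oplus\mathfrak{g}_1$ under a maximal torus of $G_0$. Any odd root $\alpha$ with $(\alpha,\alpha)=0$, or with $2\alpha$ not a root, yields an odd root vector $x$ with $[x,x]=0$, which step two forbids. This should force the odd part to be supported on the centralizer-free simple ideals, kill any odd nilpotent radical, and leave only simple summands all of whose odd roots are non-isotropic. By Kac's list these are exactly the $\mathfrak{osp}(1|2n)$. Integrating back via the Harish-Chandra pair then gives $G=K\times\operatorname{OSp}(1|2n_1)\times\dots\times\operatorname{OSp}(1|2n_k)$.

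For the converse, reductivity of $K$ and characteristic zero make $\operatorname{Rep}(K)$ semisimple, and a tensor product of semisimple categories of this kind is semisimple. It therefore suffices to treat $\operatorname{OSp}(1|2n)$. Here I would use the super Casimir together with the fact that every block of $\mathfrak{osp}(1|2n)$ is typical, so central characters separate simples. A Weyl-type averaging argument, which works because $\operatorname{Rep}(\operatorname{Sp}_{2n})$ is semisimple and $k[G]$ is relatively injective, then splits all extensions.
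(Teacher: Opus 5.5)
The paper does not prove this statement; it only quotes it from Sherman (originally Djokovi\'c--Hochschild). Your outline therefore has to stand on its own, and it has two genuine gaps.

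\textbf{Your second step is false as stated.} In $\mathfrak{osp}(1|2)$, whose representation category is semisimple, \emph{every} nonzero odd $x$ has $[x,x]$ equal to a nonzero nilpotent element of $\mathfrak{sp}_2$. Concretely, take a basis $e_0\mid e_1,e_2$ of $k^{1|2}$ with $B(e_0,e_0)=B(e_1,e_2)=-B(e_2,e_1)=1$. The odd elements are $x_{a,b}\colon e_0\mapsto ae_1+be_2$, $e_1\mapsto be_0$, $e_2\mapsto -ae_0$. Then $x_{a,b}^2$ kills $e_0$ and acts on $\langle e_1,e_2\rangle$ by $\bigl(\begin{smallmatrix} ab & -a^2\\ b^2 & -ab\end{smallmatrix}\bigr)$, which is nonzero and squares to zero. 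So the ``nonzero nilpotent'' case cannot be reduced to anything. The tool you proposed for it also fails: $G/H$ need not be affine for unipotent $H$. For example, $\mathrm{SL}_2/\mathbb{G}_a\cong\mathbb{A}^2\setminus\{0\}$, and by Matsushima $G_0/H_0$ is affine only if $H_0$ is reductive. What survives is only the case $[x,x]=0$. There $H_0$ is trivial, $G/H$ is affine, and your induction/Frobenius argument correctly shows that $k$ would be projective in $\operatorname{Rep}(\mathbb{G}_a^{0|1})$, which gives the contradiction.

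\textbf{Your third step does not follow from what step two proves.} The hypotheses ``$G_0$ reductive and no nonzero $x\in\mathfrak{g}_1$ with $[x,x]=0$'' do not determine $G$. Take $G_0=\mathbb{G}_m$ and $\mathfrak{g}_1=kx$ with trivial $G_0$-action and $[x,x]=z$ spanning $\operatorname{Lie}(G_0)$. There is no nonzero self-commuting odd vector. Yet $\operatorname{Rep}(G)$ is not semisimple: on $\mathbb{G}_m$-weight $0$, $x$ squares to zero, which produces a nonsplit extension of $k$ by $\Pi k$.

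Your root-vector argument misses this for two reasons. First, it never touches the odd zero-weight space $\mathfrak{g}_1^T$, where $[x,x]\in\mathfrak{t}$; your claim that $(\alpha,\alpha)=0$ forces $[x,x]=0$ fails exactly at $\alpha=0$. Second, it never produces the splitting into simple ideals that Kac's list, a list of \emph{simple} superalgebras, presupposes.

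The missing idea is to use semisimplicity once more, on the adjoint representation.
\begin{enumerate}[label=(\arabic*)]
\item Complete reducibility of $\mathfrak{g}$ in $\operatorname{Rep}(G)$ gives $\mathfrak{g}=\mathfrak{z}(\mathfrak{g})\oplus\bigoplus_i\mathfrak{s}_i$ with the $\mathfrak{s}_i$ simple ideals, since complementary ideals commute.
\item Odd central elements are self-commuting, so $\mathfrak{z}(\mathfrak{g})$ is even.
\item By Kac, a simple Lie superalgebra with reductive even part and no nonzero self-commuting odd elements is either purely even or $\mathfrak{osp}(1|2n)$.
\item When integrating back, you must still show that $G_0$, a priori $(\tilde K\times\prod_i\mathrm{Sp}(2n_i))/Z$, is an honest product. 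This holds because $Z\subset\tilde K$: the element $-1\in\mathrm{Sp}(2n_i)$ acts by $-1$ on $\mathfrak{osp}(1|2n_i)_{\bar 1}$, so it cannot lie in $Z$.
\end{enumerate}

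Your first step is fine in outline, though the splitting there comes from the $G_0$-stable $\mathbb{Z}$-grading of $\bigwedge\mathfrak{g}_1^\vee$, not from averaging. The converse, via typicality of all finite-dimensional simple $\mathfrak{osp}(1|2n)$-modules and Kac's projectivity of typical modules, is also fine in outline.
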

\begin{remark}
   We can see that in the special case we will be tackling, we are still working with a reductive algebraic supergroup, which has nice properties with respect to taking quotients.
\end{remark}
Next we describe the mentioned triangulation lemma for $\operatorname{OSp}(1|2)$. Let $\Lambda_i$ be the free odd superalgebra in $i$-generators. We will work in $\Lambda$-points, where $\Lambda \coloneqq \underset{i}{\mathrm{colim}}\  \Lambda_i$. 
\begin{lemma}\label{triaOSP}
    Let $A_0 \in SL(2,\mathbb{C}) \subset \operatorname{OSp}(1|2)$ and  $B_0 \in \operatorname{OSp}(1|2)$. Furthermore, assume that $A$ can be diagonalized as $A=\mathrm{diag}(\mu,\mu^{-1},1)$ and $\mu\neq \mu^{-1}$, then one can always find an element $g \in \operatorname{OSp}(1|2)$ that acts by simultaneous conjugation on the pair $(A,B)$ such that
    \begin{align*}
         gA_0g^{-1} =  
        \begin{pmatrix}
            \mu & 0 & 0\\
            1 & \mu^{-1} & 0\\
            0 & 0 & 1
            \end{pmatrix}  
            \begin{pmatrix}
                1 & 0& 0 \\
                0 & 1 & \psi \\
                \psi & 0 & 1
            \end{pmatrix}  \quad, \quad  
            gB_0g^{-1} =  
            \begin{pmatrix}
            \lambda & \kappa & 0\\
            0 & \lambda^{-1} & 0\\
            0 & 0 & 1
            \end{pmatrix}
        \begin{pmatrix}
            1 & 0& \xi \\
            0 & 1 & 0 \\
            0 & -\xi & 1
        \end{pmatrix} 
    \end{align*}
\end{lemma}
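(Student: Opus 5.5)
We propose to reduce Lemma~\ref{triaOSP} to the bosonic Lemma~\ref{triaSL} and then remove the odd parts by an explicit odd conjugation. Every $\Lambda$-point of $\operatorname{OSp}(1|2)$ can be written uniquely as a product of a bosonic element and an element $\exp$ of an odd Lie algebra element, in the spirit of the decomposition $k[G]=k[G_0]\otimes\bigwedge\mathfrak g_1^\vee$. Concretely, we would write
\[
B_0 = \tilde B_0 \, U(\eta_1,\eta_2), \qquad
U(\eta_1,\eta_2)=\begin{pmatrix} 1+\tfrac12\eta_1\eta_2 & 0 & \eta_1\\ 0 & 1+\tfrac12\eta_1\eta_2 & \eta_2\\ \eta_2 & -\eta_1 & 1+\eta_1\eta_2\end{pmatrix},
\]
up to the appropriate signs and normalisations. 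Here $\tilde B_0\in \operatorname{SL}_2(\Lambda_0)\subset\operatorname{OSp}(1|2)$ has even entries. We allow all of $\operatorname{SL}_2(\Lambda_0)$, not just $\operatorname{SL}_2(\mathbb C)$, since the body part of $\Lambda_0$ is $\mathbb C$ and nilpotent corrections are harmless.

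\textbf{Step 1 (bosonic part).} Apply the diagonalizable case of Lemma~\ref{triaSL} to the pair $(A_0,\tilde B_0)$, with $A_0=\operatorname{diag}(\mu,\mu^{-1})$. That proof only solved polynomial equations (the quartic in $y$), and its leading coefficients are invertible in the body. So the same argument works over $\Lambda_0$: solve over $\mathbb C$ for the body, then lift solutions through the nilpotent filtration by Hensel/Newton iteration, assuming a simple root. If the quartic has only multiple roots, we perturb the choice of $y$ or adjust $x$. This gives $h\in\operatorname{SL}_2(\Lambda_0)$ with $hA_0h^{-1}$ lower triangular with $1$ in the corner and $h\tilde B_0h^{-1}$ upper triangular $\begin{pmatrix}\lambda&\kappa\\0&\lambda^{-1}\end{pmatrix}$. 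Conjugating $U$ by $h$ gives another odd factor $U(\eta_1',\eta_2')$, because $\operatorname{SL}_2$ acts linearly on $\mathfrak g_1\cong\mathbb C^2$.

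\textbf{Step 2 (odd part).} Conjugate the pair further by $g_1=U(\theta_1,\theta_2)$ with odd parameters. To first order in odd variables, conjugating a bosonic matrix $M$ produces the odd factor $U\big((M^{-1}-1)\theta\big)$, using the action on the standard representation. Doing this on both $A$ and $B$ gives a linear system in $(\theta_1,\theta_2)$. We want the $\eta_2$-component of $B$'s odd part and the $\eta_1$-component of $A$'s odd part to vanish, which leaves exactly the forms with $\xi$ and $\psi$ in the statement.

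The $\eta_2$-coefficient equation from $B$ is $(\lambda-1)\theta_2+\eta_2'=0$. The $\eta_1$-coefficient equation from $A$ is $(\mu^{-1}-1)\theta_1 + c\,\theta_2=0$ for an explicit entry $c$. This triangular $2\times2$ system is solvable whenever the body of $\lambda$ and the body of $\mu$ differ from $1$; the hypothesis $\mu\neq\mu^{-1}$ guarantees $\mu\neq1$. Because odd elements square to zero, higher-order terms are of nilpotent degree at least $2$. We then iterate the correction along the $J$-adic filtration, which terminates since the relevant Grassmann algebra has finite odd dimension (first reduce from the colimit to some $\Lambda_i$). Finally we check that the conjugation preserves the bosonic normal forms up to nilpotent even corrections, and reabsorb those corrections by another pass of Step 1.

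\textbf{Main obstacle.} The hard part is the degenerate cases of Step 2, where the system fails to be invertible, for example when the body of $\lambda$ equals $1$. If $\lambda=1$, we would try swapping which component we kill, using the residual freedom in $y$ from Step 1 and the lower-triangular unipotent stabilizer of $A$'s form, to make the relevant coefficient invertible. A second, bookkeeping-heavy difficulty is keeping Steps 1 and 2 compatible, since the odd conjugation reintroduces even nilpotent corrections. We would handle this by an induction on nilpotency degree rather than a closed formula.
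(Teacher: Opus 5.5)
Away from degenerate cases, your argument is the paper's argument in different packaging. The paper conjugates by the single ansatz $g=h\,U_1(x^{-1}\nu)$, with $h=\begin{pmatrix}0&-x^{-1}\\ x&y\end{pmatrix}$ and $U_1(\xi)$ the $\xi$-factor of the statement. It takes $y$ to be a root of the corner equation, citing Nagamichi to lift roots to $\Lambda$, and then solves $\eta=0$ linearly for $\nu$. You instead separate a bosonic Hensel step from a linear odd step.

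Your linearisation is the accurate one. The coefficient of $\nu$ in $\eta$ is $f-a-x^{-1}yc$, whose body is $1-\lambda_\#^{-1}$, not $y$. So the paper's proof tacitly needs $\lambda_\#\neq1$ as well, and the root lifting it cites can only hold for simple roots.

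The genuine gap is in your ``main obstacle'' paragraph and in the multiple-root sentence of Step 1. The repairs you propose cannot succeed, because the lemma is false in exactly these cases.

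\textbf{Multiple roots in Step 1.} Put $z=y^2$ and $k=(\mu-\mu^{-1})^{-1}$. The corner equation becomes $cz^2+k(a-d)z-bk^2=0$, with discriminant $k^2\bigl((a+d)^2-4\bigr)$. So multiple roots occur precisely when $\operatorname{tr}B_\#=\pm2$, and then no choice of $x,y$ helps. Take $B_0=\operatorname{diag}(\tilde B_0,1)$ with $\tilde B_0=\begin{pmatrix}1+\theta_1\theta_2&1\\ \theta_1\theta_2&1\end{pmatrix}$. The supertrace is conjugation invariant, and every normal form has supertrace $\lambda+\lambda^{-1}-1$. You would therefore need $(\lambda-1)^2=\lambda\theta_1\theta_2$ with $\lambda\in\Lambda_0$. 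The body forces $\lambda_\#=1$; after that the left side lies in Grassmann degree $\geq4$, while the right side contains $\theta_1\theta_2$.

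\textbf{The case $\lambda_\#=1$ in Step 2.} Write $q_u$ for the odd generator with last column $(u_1,u_2,0)^T$ and last row $(u_2,-u_1,0)$, so that $\operatorname{Ad}_h q_u=q_{hu}$ for $h\in\operatorname{SL}_2$. Take $B_0=1+\theta_1 q_{e_2}$.
\begin{itemize}
\item The retraction $\Lambda\to\mathbb C[\theta_1]$ killing $\theta_i$ for $i\geq2$ fixes $A_0,B_0$ and preserves the normal forms. So a solution $g$ would yield one of the form $h(1+\theta_1q_w)$ with $h\in\operatorname{SL}_2(\mathbb C)$.
\item For such $g$ one gets $gB_0g^{-1}=1+\theta_1q_{he_2}$. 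The normal form forces its $(2,3)$ entry $\theta_1(he_2)_2$ to vanish.
\item But $h$ must carry the eigenline $\mathbb Ce_2$ of $A_0$ to an eigenline of $\begin{pmatrix}\mu'&0\\1&\mu'^{-1}\end{pmatrix}$, that is, to $\mathbb Ce_2$ or $\mathbb C(\mu'-\mu'^{-1},1)$. Both have nonzero second coordinate, so no such $h$ exists.
\end{itemize}
Here $B_\#=1$, so the entire centraliser of $A$ is available, and it still does not help. That centraliser is a torus, not a lower-triangular unipotent group. ``Swapping which component is killed'' is not allowed either, since it changes the target normal form.

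\textbf{Diagonal $B_\#$.} Step 1 also inherits the failure of Lemma~\ref{triaSL} when $B_\#$ is diagonal and non-scalar: the corner equation reduces to $k(a-d)=0$, which has no solution.

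\textbf{What can be proved.} The statement needs an extra hypothesis, for example $\operatorname{tr}B_\#\neq\pm2$ and $B_\#$ not commuting with $A_\#$. Under it, the corner equation has a simple nonzero root, and the odd system has invertible coefficients $\mu^{-1}-1$ and $\lambda-1$. Your Steps 1--2, with Newton iteration along the nilpotent filtration, then give a complete proof. The degenerate cases should be excluded by this hypothesis rather than handled by a workaround.
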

\begin{proof}
    Let
\begin{align*}
    g=
    \begin{pmatrix*}
        0 & -x^{-1} & 0 \\
        x & y & \nu \\
        0 & -x^{-1}\nu & 1 
    \end{pmatrix*},
\end{align*}
where $x,y, \nu$ are indeterminates and conjugate $diag(\mu, \mu^{-1},1)$, which yields 
\begin{align*}
    \begin{pmatrix*}
        \mu^{-1} & 0 & 0 \\
        xy(\mu-\mu^{-1}) & 
        \mu & \nu (-\mu + 1) \\
        \nu (\mu^{-1}-1) & 0 & 1
    \end{pmatrix*}
\end{align*}
define $ \psi \coloneqq \nu(\mu^{-1}-1) $ and choose $xy=(\mu - \mu^{-1})^{-1}$ and we can reduce this to 
\begin{align*}
   A= \begin{pmatrix*}
        \mu^{-1} & 0 & 0 \\
        1 &  \mu & \psi\mu \\
        \psi & 0 & 1
    \end{pmatrix*}.
\end{align*}
On the other hand, we have to conjugate the matrix $B_0$:
\begin{align*}
    CB_0C^{-1}= 
    \begin{pmatrix*}
        0 & -x^{-1} & 0 \\
        x & y & \nu\\
        0 & -x^{-1}\nu & 1 
    \end{pmatrix*} 
    \begin{pmatrix*}
        a & b & \alpha\\
        c & d & \beta \\
        \gamma & \delta & f
    \end{pmatrix*}
    \begin{pmatrix*}
        y & -x^{-1} & -x^{-1}\nu \\
        x & 0 &  0\\
       -\nu & 0& 1 
    \end{pmatrix*}
\end{align*}
The result is rather non enlightening. However, we can use the same reasoning as in the $\operatorname{SL}(2,\mathbb{C})$ case and set the bosonic bottom left corner to be zero. So let us look at the argument and write
\begin{align*}
   B= CB_0C^{-1} = 
    \begin{pmatrix*}
        S & T & \epsilon \\
        P & Q & \eta \\
        S\eta - P\epsilon & T \eta - Q \epsilon & 1 + \epsilon \eta
    \end{pmatrix*},
\end{align*}
where $P=-bx^2+cy^2+axy-dxy+\alpha\nu + y \beta \nu$.
Now set $P=0$, because as previously seen we did not make a choice for $y$ yet, so we can find a solution to that polynomial. This reduces our problem to
\begin{align*}
    B=
    \begin{pmatrix*}
         S & T & \epsilon \\
        0 & Q & \eta \\
        S\eta & T \eta - Q \epsilon & 1 + \epsilon \eta
    \end{pmatrix*},
\end{align*}
 where we notice, that $\eta$ is a function of $\beta$, so we work with the object $\eta(\nu)$. The function is naturally linear in $\nu$, so we can simply solve for $\nu$ and due to the upper triangular shape we also rewrite $S= \lambda$, $T = \kappa $ and $Q = \lambda^{-1}$ and we arrive at
 \begin{align*}
    B=
    \begin{pmatrix*}
        \lambda & \kappa & \epsilon \\
        0 & \lambda^{-1} & 0 \\
        0 & \lambda^{-1} \epsilon & 1
    \end{pmatrix*}.
 \end{align*}
Finally, define $\xi \coloneqq \lambda^{-1}\epsilon$ can write down the normal form
\begin{align*}
    \begin{pmatrix*}
        \lambda & \kappa & \lambda \xi \\
        0 & \lambda^{-1} & 0 \\
        0 & \xi & 1
    \end{pmatrix*}.
\end{align*}
This explicit computation, now lets us write down "invariant" odd functions $\theta, \xi$ (invariant up to $\pm$). At this point we have to write down what $\nu$ is. In the conjugation one computes that 
\begin{align*}
    \epsilon &= -x^{-1}\beta \\
    \eta &= -x^{-1}(xa+yc)\beta + (x\alpha + y\nu+\beta f) \\
\end{align*}
setting $\eta = 0$ one can solve for $\nu$ which yields 
\begin{align*}
    \nu &= y^{-1}(x^{-1}(xa+yc)\beta - x\alpha - \beta f ).
\end{align*}
Collecting all of the previous results we can write down 
\begin{align*}
    \psi&= y^{-1}(x^{-1}(xa+yc)\beta - x\alpha - \beta f )(\mu^{-1}-1) \\
    \xi &= -\lambda^{-1} x^{-1}\beta,
\end{align*}
where we remind ourselves that $x= y^{-1}(\lambda - \lambda^{-1})^{-1}$ and $y$ is a solution of the polynomial $P$. We are working in a non-integral ring, so the existence of a solution is non-obvious. However, a result of Shigeaki Nagamichi \cite{kobayashi1988eigenvalues}, tells us as long as the reduction possesses roots, the corresponding super polynomial also has roots.
\end{proof}
\begin{remark}
    Although it is proven for complex coefficients, we can see that the same proof strategy works for the case of $\operatorname{OSp}(1|2)$ with real coefficients, where the reduction of that element is hyperbolic.
\end{remark}
\begin{remark}
    In \cite{stanford2020jtgravityensemblesrandom} they make use of this parametrization of the moduli space of flat $\operatorname{OSp}(1|2)$-connections to compute an explicit form of $G$-invariant measures. They also write down the pair in the form 
    \begin{align*}
       U&= \begin{pmatrix*}
            e^a & \kappa & 0 \\
            0 & e^{-a} & 0 \\
            0 & 0 & 1 
        \end{pmatrix*} exp(\xi q_1) 
        &&V=
        \begin{pmatrix*}
            e^{-b} & 0 & 0 \\
            1 & e^{b} & 0 \\
            0 & 0 & 1 
        \end{pmatrix*} exp(\xi q_2) \\
        q_1 &=
        \begin{pmatrix*}
            0 & 0 & 1 \\
            0 & 0 & 0 \\
            0 & -1 & 0
        \end{pmatrix*}
        &&q_2 = 
        \begin{pmatrix*}
            0 & 0 & 0 \\
            0 & 0 & 1 \\
            1 & 0 & 0
        \end{pmatrix*}.
    \end{align*}
    This might look different compared to the parametrization given earlier, but one recalls that the Taylor expansion of odd elements vanishes after the first order. Expanding it that way gives you the desired result. Furthermore, one notes that this parametrization of our pair $(A,B)$ ensures that there are 3 even degrees of freedom and 2 odd degrees of freedom (defined up to sign). We also note that $q_1,\  q_2$ are the generators of $\mathfrak{g}_1$ of $\mathfrak{osp}(1|2)$ and by exponentiation they become elements in the corresponding algebraic supergroup.
\end{remark}
\subsection{Invariant Theory}
In this section, we begin by reproving the Fricke-Klein case again with a method that directly aligns with the method of our main result. This explicit worked example should help elucidate the proof strategy for the supercase. Afterwards, we introduce the necessary tools from classical invariant theory and adapt them to the $\mathbb{Z}_2$-graded case. Most of these results will generalize without adapting the proof and can be skipped for the experienced reader. We follow \cite{kraft1996invariant} and \cite{mumford1994geometric}.\\
First a short remark on $\operatorname{OSp}(1|2)$.
Since we have semisimple representations for $\operatorname{OSp}(1|2)$ we also get that it is linearly reductive. We start by introducing the notion of a Reynolds operator.
\begin{definition}[\cite{mumford1994geometric}]
    Let $R: V \rightarrow V$ be a linear map of a vector space $V$. Let $G$ be an algebraic group G, which acts on V rationally. It is called a Reynolds operator if $R$ is a projection onto $V^G$, the space of invariant vectors in $V$.
\end{definition}
Suppose this operator exists and let $Y$ be a subvariety of $X$. Then we have a short exact sequence of $G$-modules, which by linear reductivity gives us also a splitting and hence the diagram
\begin{center}
    \begin{tikzcd}
       0 \arrow[r] & I \arrow[r] \arrow[d, "\cong"]& k[X] \arrow[r] \arrow[d,"\cong"] & k[Y] \arrow[r] \arrow[d, "\cong"] & 0 \\
       0 \arrow[r] & I^G \oplus I' \arrow[r] \arrow[d, ""]& k[X]^G \oplus k[X]' \arrow[r] \arrow[d,"R"] & k[Y]^G \oplus k[Y]' \arrow[r] \arrow[d, "R"] & 0 \\
       0 \arrow[r] & I^G \arrow[r] & k[X]^G \arrow[r] & k[Y]^G \arrow[r] & 0
    \end{tikzcd}.
\end{center}
The induced map on invariant subrings is surjective by the Reynolds property and we can check injectivity by diagram chasing.Therefore, 
\begin{align}
    k[Y]^G=k[X]^G/I^G.
\end{align}
This is the key property used in the proof of the main theorem. To illustrate the main theorem in an explicit example, we will apply this isomorphism to provide an alternative proof for the theorem of Fricke-Klein. 
\begin{example}
    Consider the ring $\mathbb{C}[M_2^2]^{SL_2}$, the ring of invariants of $2\times 2$ matrices invariant under conjugation by $SL_2$. It is known \cite[section 2.4]{kraft1996invariant} that this ring is 
    \begin{align}
        \mathbb{C}[M_2^2]^{SL_2} = \mathbb{C}[\operatorname{tr}(A),\operatorname{tr}(B),\operatorname{tr}(AB),\operatorname{tr}(A^2),\operatorname{tr}(B^2)].
    \end{align}
    the ideal $I$ is generated in this case by $I=\langle \operatorname{det}(A)-1,\operatorname{det}(B)-1 \rangle $. By applying the Cayley-Hamilton theorem., we obtain the following equation:
    \begin{align}
        A^2 - \operatorname{tr}(A)A + \operatorname{det}(A)\operatorname{id} = 0, 
    \end{align} 
    which after applying the trace to the whole equation gives us 
    \begin{align}
        \begin{split}
              \operatorname{tr}(A^2) - \operatorname{tr}(A)^2 + 2 = 0 \\
        \operatorname{tr}(A^2)= \operatorname{tr}(A)^2 -2.
        \end{split}
    \end{align}
    This now makes the generators $\operatorname{tr}(A^2), \operatorname{tr}(B^2)$ redundant and we get 
    \begin{align}
        \mathbb{C}[M_2^2]^{SL_2}/I^{SL_2} = \mathbb{C}[\operatorname{tr}(A),\operatorname{tr}(B),\operatorname{tr}(AB)].
    \end{align}
    One should be careful though, that this isomorphism is an isomorphism of modules and not of rings. However, it does give us enough information about the number of generators which suffices.
\end{example}
\begin{remark}
    Even though this proof used the Cayley-Hamilton theorem, it is different to the proof given in \cite{goldman2005exposition}.
\end{remark}
Having seen the proof strategy in a worked example, we now need to show that the Reynold operator exists.
\begin{theorem}
    Let $G$ be a linearly reductive algebraic supergroup that acts rationally on a $k$-vector space $V$ and let $V^G$ be the subspace of invariant vectors. Then there exists a uniquely defined linear operator 
    \begin{align}
        R: V \longrightarrow V 
    \end{align}
    projecting $V$ onto $V^G$.
\end{theorem}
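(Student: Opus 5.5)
The plan is to imitate the classical construction of the Reynolds operator via complete reducibility. Linear reductivity means every finite-dimensional rational representation of $G$ is semisimple, which holds for $\operatorname{OSp}(1|2)$ by Sherman's theorem. We reduce to finite-dimensional subrepresentations and take $R$ to be the projection onto the isotypic component of the trivial representation.

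\textbf{Reduction to finite dimensions.} A rational action of $G$ on $V$ is a comodule structure $\rho: V \to V \otimes k[G]$ over the Hopf superalgebra $k[G]$. The first step is the local finiteness lemma: every $v \in V$ lies in a finite-dimensional $\mathbb{Z}_2$-graded subcomodule. The proof is verbatim the classical one. Write $\rho(v) = \sum_i v_i \otimes f_i$ with the $f_i$ homogeneous and linearly independent. Coassociativity then shows that the span of the $v_i$ (together with $v$) is a subcomodule. One only has to insert Koszul signs, and these do not affect spans. Homogeneous components of $v$ can be treated separately, since $\rho$ is even.

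\textbf{Construction on finite-dimensional pieces.} Let $W \subset V$ be a finite-dimensional subrepresentation. By semisimplicity, $W = W^G \oplus W'$, where $W'$ is the sum of all nontrivial simple subrepresentations of $W$. Equivalently, $W'$ is the sum of the images of $G$-maps from nontrivial simples, so it is intrinsic. Define $R_W$ as the projection onto $W^G$ along $W'$. These projections are compatible: if $W_1 \subset W_2$, then $W_1^G = W_1 \cap W_2^G$ and $W_1' = W_1 \cap W_2'$. This follows because any $G$-map from a simple $S$ into $W_1$ is also a $G$-map into $W_2$, and isotypic components are functorial. Hence $R_{W_2}|_{W_1} = R_{W_1}$. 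By local finiteness, $V$ is the directed union of its finite-dimensional subrepresentations, so the $R_W$ glue to a linear map $R: V \to V$. This map is even, $G$-equivariant, idempotent, and has image $V^G$.

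\textbf{Uniqueness.} Uniqueness holds if we require $R$ to be a $G$-equivariant projection; this is the standard meaning, and we make it explicit. Suppose $R'$ is another such operator. Its restriction to any $W$ kills every nontrivial simple $S \subset W$. Indeed, $R'(S)$ is a $G$-equivariant image of $S$ inside $V^G$, which is a trivial representation, so by Schur's lemma this image is zero. Therefore $R' = R$.

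\textbf{Main obstacle.} The main point needing care is the super setting. There, "invariant" means $\rho(v) = v \otimes 1$, and the relevant simple objects include the odd trivial representation $\Pi k$. We must decide whether $V^G$ includes the odd invariants; we do include them, so the trivial isotypic component is $k \oplus \Pi k$. We must also check that Schur's lemma holds in the form needed, namely that the only simple objects with trivial action are $k$ and $\Pi k$. Both points follow because $\operatorname{Hom}_G$ consists of even maps and $k$ is algebraically closed. The remaining arguments carry over with only sign bookkeeping.
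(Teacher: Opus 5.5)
Your proof is correct. It produces the same operator as the paper, but it constructs the kernel differently.

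The paper argues globally. It applies Zorn's lemma to the $G$-stable subspaces meeting $V^G$ trivially. It then claims a maximal one $V'$ is unique, because the sum of two such subspaces still meets $V^G$ trivially. Finally it defines $R$ as the projection with kernel $V'$, so the uniqueness it asserts is that of this maximal complement.

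You avoid choice entirely by naming the kernel: the sum of the nontrivial isotypic components of each finite-dimensional subcomodule. You check compatibility under inclusions, glue along the directed union, and prove uniqueness among $G$-equivariant projections by Schur's lemma.

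Your route makes explicit several points the paper's sketch passes over:
\begin{itemize}
\item local finiteness, which the paper tacitly uses by taking $W$ finite-dimensional so that linear reductivity applies;
\item the fact that only the component of $v$ outside $W^G$ lies in $W'$, whereas the paper writes $v \in W'$;
\item the reason a sum of two $G$-stable subspaces avoiding $V^G$ still avoids it, namely that neither has a trivial constituent (the sum need not be direct, contrary to the paper's notation);
\item the equality $V^G + V' = V$;
\item the need for an equivariance hypothesis at all, since linear projections onto $V^G$ are otherwise far from unique.
\end{itemize}
You also handle the odd trivial line $\Pi k$, which the paper ignores.

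Once repaired, the Zorn argument needs exactly your two ingredients (local finiteness and semisimplicity of finite-dimensional pieces). So it offers no shortcut over your direct construction.
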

\begin{proof}
    If $v \in V$ is not invariant, then it is contained in some subspace $W$, such that $G \cdot W \subset W$. By linear reductivity $W$ once again splits into 
    \begin{align}
        W = W^G \oplus W'   
    \end{align}
    It follows that $v \in W'$, so it is non-empty invariant and $W' \cap V^G = 0$. By applying Zorn's lemma to the set of invariant subspaces, with this property we can find a maximal one, which we denote by $V'$. Suppose that there are two such maximal subspaces denoted by $V_1, V_2$. It holds for the direct sum  that $ V_1 \oplus V_2 \cap V^G = 0$. To show that, assume that it is non-empty, thus we can find an element $w= v_1 \oplus v_2$, which is invariant under the group action. However, since the action acts on both parts separately, this is a contradiction, hence $V_1 \oplus V_2 \cap V^G = 0$. This means, that the direct sum, would be a larger subspace with the wanted property, which once again is a contradiction. Thus $V'$ is uniquely determined and we can also see that 
    \begin{align}
        V^G \oplus V' = V.  
    \end{align}
    Now we define $R$ by the property, that it fixes $V^G$ and has $V'$ as its kernel.
\end{proof}
\begin{remark}
    From here on out, we could prove Nagata's theorem in the supercase, however that is not really useful, since previously we have seen that the class of redudctive supergroups is not very large. Therefore, it is not nearly as powerful as in the case of algebraic groups.
\end{remark}
\begin{remark}
    One can prove, that the existence of a Reynolds operator is equivalent to the representation category being semisimple. It is also equivalent to the fixed point functor $(-)^G$ being exact.
\end{remark}
We continue by establishing that polarization and restitution works just as in the purely bosonic case. For the reader unfamiliar with classical invariant theory, polarization allows us to transform the polynomial into a multilinear expression, where we already know the invariants and restitution is the inverse process. We combine in the following way 
\begin{center}
    \begin{tikzcd}
        \text{Polarization} \arrow[r] & \text{Computing Invariants} \arrow[r] & \text{Restitution}
    \end{tikzcd}
\end{center}
which allows us to compute the polynomial invariants, which arise out of the multilinear ones. To be a little bit more precise, we want to show that 
\begin{center}
    \begin{tikzcd}
        \bigoplus_d k[V]_d \arrow[r,"\mathcal{P}"] & \bigoplus_d k[V^d] \arrow[r,"F"] & \bigoplus k[V]_d
    \end{tikzcd}
\end{center}
is $G$-equivariant, where the maps will be explained shortly. We can see that one can simply chase an invariant polynomial $f \in k[V]$ through this construction, find an explicit form in terms of multilinear expressions and then recover the original polynomial in terms of the multilinear invariants. This strategy shows, why it is enough to be able to compute multilinear invariants, which is much easier compared to the more difficult problem of finding polynomial invariants right off the bat.
\begin{definition}
    Consider a direct sum $V=V_1 \oplus ... \oplus V_r$ of finite dimensional super vector spaces. A function $f \in k[V_1 \oplus ... \oplus V_r]$ is called multihomogeneous of degree $h=(h_1,...,h_r)$ if $f$ is homogeneous of degree $h_i$ in $V_i$, i.e. for all $v_1,...,v_r \in V$ and $t_1,...,t_r \in k$ we have 
    \begin{align}
        f(t_1v_1,...,t_rv_r) = t_1^{h_1}\cdot...\cdot t_r^{h_r}f(v_1,...v_r).
    \end{align}
    One can also call $h$ the multidegree of $f$.
\end{definition}
Every polynomial function $f$ is in a unique way a sum of multihomogeneous functions $f= \sum f_h$. The $f_h$ are usually called multihomogeneous components of $f$. This gives rise to a decomposition 
\begin{align}
    k[V_1,...,V_r] = \bigoplus_{ h \in \mathbb{N}^r} k[V_1,...,V_r]_h,
\end{align}
where $k[V_1,...,V_r]_h$ denotes the subspace of multihomogeneous functions of degree $h$.
\begin{remark}
    This is a different grading from the natural $\mathbb{Z}_2$-grading of our superring.
\end{remark}
\begin{remark}
    The way this interacts with super is demonstrated in the following. Assume $V_r$ is entirely odd, then it means the polynomial $f$ can be at most homogeneous of degree $h_r=1$
\end{remark}
It is clear that the multidegree is preserved under the action of $\operatorname{GL}(V_1) \times ... \times \operatorname{GL}(V_r)$. Furthermore, if $V_1,...V_r$ are representations, then we also have a natural grading on the ring of invariants 
\begin{align}
    k[V_1,...,V_r]^G = \bigoplus_{ h \in \mathbb{N}^r} k[V_1,...,V_r]_h^G.
\end{align}
\begin{definition}
    Let $V$ be a finite dimensional $k$-vector space. Identify $k[V]= \bigoplus_d k[V]_d$, where $k[V]_d$ denotes the space of homogeneous polynomials of degree $d$. Let $f \in k[V]_d$ and $v = \sum_i^dt_iv_i$ for $v_i \in V$. We define 
    \begin{align*}
       \mathcal{P}(f)(\sum_it_iv_i)= \sum_{s_1+...+s_d=d}t_1^{s_1}...t_d^{s_d}f_{s_1,...,s_d}(v_1,...,v_d),
    \end{align*}
    where $f_{s_1,...,s_d} \in k[V^d]$ are well-defined and multihomogeneous of multidegree $(s_1,...,s_d)$. We call $f_{1,...,1} \in k[V^d]$ the (full) polarization of $f$ denoted by $\mathcal{P}f$.
\end{definition}
\begin{lemma}
    The linear operator $\mathcal{P} \colon K[V]_d \to K[V^d]_{(1,1,\ldots,1)}$ has the following properties:
\begin{enumerate}[label=(\alph*)]
  \item $\mathcal{P}$ is $\mathrm{GL}(V)$-equivariant;
  \item $\mathcal{P}f$ is symmetric;
  \item $\mathcal{P}f(v, v, \ldots, v) = d! \, f(v)$.
\end{enumerate}
\end{lemma}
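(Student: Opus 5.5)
The lemma to prove is the standard polarization lemma: $\mathcal{P}$ is $\mathrm{GL}(V)$-equivariant, $\mathcal{P}f$ is symmetric, and $\mathcal{P}f(v,\dots,v)=d!\,f(v)$. Everything follows from the defining identity
\[
f\Big(\sum_i t_iv_i\Big)=\sum_{s_1+\dots+s_d=d}t_1^{s_1}\cdots t_d^{s_d}\,f_{s_1,\dots,s_d}(v_1,\dots,v_d),
\]
together with the uniqueness of coefficients of a polynomial in the auxiliary scalars $t_1,\dots,t_d$. First note that each $f_{s}$ is well defined. Since $f$ is homogeneous of degree $d$, the map $(t,v_1,\dots,v_d)\mapsto f(\sum t_iv_i)$ is a polynomial in the $t_i$ with coefficients in $k[V^d]$. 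Because $k$ is infinite (characteristic zero), the coefficients are determined uniquely. In the super setting I work with the functor of points over $\Lambda$-points, with the $t_i$ even scalars, so the same coefficient-comparison argument applies verbatim.

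For (a), take $g\in\mathrm{GL}(V)$ acting by $(g\cdot f)(v)=f(g^{-1}v)$, and diagonally on $V^d$. By linearity of $g^{-1}$,
\[
(g\cdot f)\Big(\sum t_iv_i\Big)=f\Big(\sum t_i g^{-1}v_i\Big)=\sum t^{s}f_s(g^{-1}v_1,\dots,g^{-1}v_d).
\]
Comparing coefficients of $t_1\cdots t_d$ gives $\mathcal{P}(g\cdot f)=g\cdot\mathcal{P}f$.

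For (b), take a permutation $\sigma$. Then $\sum t_iv_{\sigma(i)}=\sum t_{\sigma^{-1}(j)}v_j$. Expanding both sides and comparing the coefficient of the monomial $t_1\cdots t_d$, which is fixed under relabeling, gives $\mathcal{P}f(v_{\sigma(1)},\dots,v_{\sigma(d)})=\mathcal{P}f(v_1,\dots,v_d)$.

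For (c), set all $v_i=v$. Then
\[
f\Big(\big(\textstyle\sum t_i\big)v\Big)=\Big(\sum t_i\Big)^d f(v).
\]
By the multinomial theorem, the coefficient of $t_1\cdots t_d$ on the right is $d!\,f(v)$. On the left the same coefficient is $\mathcal{P}f(v,\dots,v)$, which gives (c).

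The only delicate step is making "comparing coefficients" rigorous when odd coordinates are present. I handle this by treating the $t_i$ as formal even indeterminates and working in $k[V^d][t_1,\dots,t_d]$, which is a free module over $k[V^d]$ on the monomials $t^s$. Every identity above is then an identity in that module, and no sign issues arise because the $t_i$ are even.
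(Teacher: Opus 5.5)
Your proof is correct and follows the same route as the paper: for (a) you extract the coefficient of $t_1\cdots t_d$ in $f(\sum_i t_iv_i)$ after substituting $g^{-1}v_i$, and for (c) you read off the multinomial coefficient $d!$ of $t_1\cdots t_d$ in $(\sum_i t_i)^d$. Beyond what the paper writes, you give an actual relabeling argument for (b), which the paper only calls obvious, and you make the coefficient comparison rigorous in the super setting by working formally in $k[V^d][t_1,\dots,t_d]$ with even $t_i$, a point the paper leaves implicit.
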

\begin{proof}
    For equivariance consider the diagram 
    \begin{center}
        \begin{tikzcd}
            k[V]_d \arrow[r, "\mathcal{P}"] \arrow[d, "g \cdot"] & k[V^d] \arrow[d, "g \cdot"] \\
            k[V]_d \arrow[r, "\mathcal{P}"] & k[V^d]
        \end{tikzcd},
    \end{center}
    and let 
    \begin{align}
        f(t_1v_1+...+t_dv_d) = \sum t_1^{s_1}\cdot ... \cdot t_d^{s_d} f_{s_1...s_d}(v_1,...v_d).
    \end{align}
    We start chasing 
    \begin{center}
        \begin{tikzcd}
            \sum t_1^{s_1}\cdot ... \cdot t_d^{s_d} f_{s_1...s_d}(v_1,...v_d) \arrow[r, mapsto] \arrow[d, "g\cdot"]& t_1\cdot ... \cdot t_d f_{s_1...s_d}(v_1,...v_d) \arrow[d, "g \cdot"]\\
            \sum t_1^{s_1}\cdot ... \cdot t_d^{s_d} f_{s_1...s_d}(g^{-1}v_1 ,...,g^{-1}v_d ) \arrow[r,mapsto] &  t_d f_{s_1...s_d}(g^{-1}v_1 ,...g^{-1}v_d)
        \end{tikzcd}.
    \end{center}
    $\mathcal{P}$ is also obviously symmetric. It remains to show $(c)$. Let us calculate 
    \begin{align}
        f(t_1v,...,t_dv) = (\sum_i t_i)^df(v) = (d!t_1\cdot...\cdot t_d + ...) f(v).
    \end{align}
    Looking at the first factor the claim follows.
\end{proof}
\begin{definition}
    For a multilinear $F \in K[V^d]$ the homogeneous polynomial $\mathcal{R}F$ defined by $\mathcal{R}F(v) := F(v, v, \ldots, v)$ is called the (full) restitution of $F$.
\end{definition}
By construction we also have the following result.
\begin{proposition}
    Assume $\operatorname{char} k = 0$ and let $V$ be a finite dimensional representation of a supergroup $G$. Then every homogeneous invariant $f \in K[V]^G$ of degree $d$ is the full restitution of a multilinear invariant $F \in K[V^d]^G$.
\end{proposition}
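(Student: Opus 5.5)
The plan is to follow the classical argument for the statement immediately above. Start with a homogeneous invariant $f \in K[V]^G$ of degree $d$. Apply the full polarization $\mathcal{P}$ to obtain $\mathcal{P}f \in K[V^d]_{(1,\dots,1)}$, and then set $F \coloneqq \frac{1}{d!}\mathcal{P}f$, which is legitimate because $\operatorname{char} k = 0$. Property (c) of the preceding lemma gives
\begin{align*}
\mathcal{R}F(v) = \tfrac{1}{d!}\,\mathcal{P}f(v,\dots,v) = f(v),
\end{align*}
so $f$ is the full restitution of $F$. It then only remains to check that $F$ is multilinear and $G$-invariant. Multilinearity holds by construction, since $\mathcal{P}f$ is the multidegree $(1,\dots,1)$ component.

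For invariance I will use property (a): $\mathcal{P}$ is $\mathrm{GL}(V)$-equivariant. Since $G$ acts on $V$ through a representation $G \to \mathrm{GL}(V)$, equivariance under $\mathrm{GL}(V)$ restricts to equivariance under $G$, and so $g\cdot \mathcal{P}f = \mathcal{P}(g\cdot f) = \mathcal{P}f$. In the super setting, however, "$G$ acts" has to be read functorially, i.e.\ on $\Lambda$-points, or dually as a coaction $\rho\colon K[V]\to K[V]\otimes K[G]$. I would therefore phrase the argument as follows. Polarization is defined by the universal identity $f(t_1v_1+\dots+t_dv_d)=\sum t^{s}f_{s}(v_1,\dots,v_d)$, with the $t_i$ even parameters. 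This identity is natural in the superalgebra $R$ of points. For $g\in G(R)$ acting $R$-linearly on $V_R$, substituting $g^{-1}v_i$ and comparing coefficients of the even monomials $t_1\cdots t_d$ gives $(\mathcal{P}f)(g^{-1}v_1,\dots,g^{-1}v_d)=\mathcal{P}(g\cdot f)(v_1,\dots,v_d)$. This is exactly the diagram chase in the proof of the lemma, carried out over $R$.

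The main obstacle is making this coefficient comparison honest in the super case. There are two points to handle. First, the $t_i$ must be even, so that $t_i$ commutes with odd coordinates and $t_1\cdots t_d$ is a genuine monomial that can be isolated. Second, the expansion of $f$ on sums $v_1+\dots+v_d$ involves odd coordinates, which anticommute, so the components $f_s$ must be defined with the Koszul sign convention. I would handle both by choosing coordinates on $V=V_0\oplus V_1$, writing $K[V]=S(V_0^\vee)\otimes\bigwedge(V_1^\vee)$, and noting that the substitution $v\mapsto \sum t_iv_i$ is an even algebra map. Its coefficient extraction is then a composition of comorphisms, and these commute with the coaction $\rho$ because $\rho$ is an even algebra map that is linear in each $v_i$.

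Once this is in place, invariance of $f$ in the sense $\rho(f)=f\otimes 1$ implies $\rho(F)=F\otimes 1$, and the proposition follows.
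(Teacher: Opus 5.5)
Your proposal is correct and follows the paper's proof. The paper also takes $F=\frac{1}{d!}\mathcal{P}f$, gets $\mathcal{R}F=f$ from property (c), and obtains invariance of $F$ from the $\mathrm{GL}(V)$-equivariance of polarization. Your extra care with even parameters $t_i$, Koszul signs and the coaction formulation is more detail than the paper gives, but it does not change the argument.
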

\begin{proof}
    Again, $\mathcal{R} \colon K[V^d]_{(1, \ldots, 1)} \to K[V]_d$ is a linear $\mathrm{GL}(V)$-equivariant operator and we have $\mathcal{R} \mathcal{P} f = d! f$ by the property (c) of the lemma above. In fact, $f$ is the full restitution of $\frac{1}{d!}\mathcal{P}f$, which is a multihomogeneous invariant.
\end{proof}
We continue building up all necessary results, now we will begin describing how to translate vector invariants to matrix invariants. Firstly, we will state the first fundamental theorem for $GL(V)$
\begin{theorem}
    Let $V$ be a finite dimensional super vector space. The multilinear invariants under the natural $GL(V)$-action are given by 
    \begin{align}
    \begin{split}
          V^{\otimes N} \otimes (V^\vee)^{\otimes N} &\longrightarrow \mathbb{C}  \\
        v_1 \otimes ... \otimes v_N \otimes \phi_1 \otimes ... \otimes \phi_N &\longmapsto \prod_i \langle v_i, \phi_{\sigma (i)}\rangle
    \end{split}
    \end{align}
    and all its transforms under the symmetric group
\end{theorem}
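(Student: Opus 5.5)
The plan is to reduce the statement to the purely even, multilinear case, where the pairing $V\otimes V^\vee\to\mathbb{C}$ is the only invariant, and then use the symmetric group to account for all contractions. I would proceed in three steps.

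First, I would identify $\operatorname{End}(V)^{\otimes N}\otimes(\operatorname{End}(V)^\vee)^{\otimes N}$ with $(V\otimes V^\vee)^{\otimes 2N}$. By polarization and restitution (the preceding Proposition), it suffices to treat multilinear invariants on $V^{\otimes N}\otimes(V^\vee)^{\otimes N}$. These are the elements of
\[
\operatorname{Hom}_{GL(V)}(V^{\otimes N}\otimes (V^\vee)^{\otimes N},\mathbb{C})\cong \operatorname{End}_{GL(V)}(V^{\otimes N}).
\]
This identification holds because super vector spaces form a rigid symmetric monoidal category. Under it, the contraction $\prod_i\langle v_i,\phi_{\sigma(i)}\rangle$, with the Koszul sign coming from the braiding, corresponds to the action of $\sigma\in S_N$ on $V^{\otimes N}$ by signed permutations. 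The theorem is then equivalent to saying that $\operatorname{End}_{GL(V)}(V^{\otimes N})$ is spanned by the image of $\mathbb{C}[S_N]$.

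Second, I would prove this spanning statement by a super double-centralizer argument (Sergeev, Berele–Regev). The key claim is that the associative subalgebra of $\operatorname{End}(V^{\otimes N})$ generated by the action of $GL(V)$ equals the subalgebra of $S_N$-invariants, that is, $\operatorname{End}(V)^{\otimes N}$ made symmetric under the signed action. I would argue this by polarization once more. The symmetric tensors $\operatorname{Sym}^N(\operatorname{End}V)$ are spanned by the elements $g^{\otimes N}$. Since $GL(V)$ is open and dense among the even points of $\operatorname{End}(V)$, one gets them for $g\in GL(V)(\Lambda)$ over Grassmann points $\Lambda$, which is what lets the odd directions in. Because $\mathbb{C}[S_N]$ is semisimple in characteristic $0$, the double centralizer theorem then shows that the centralizer of $\operatorname{End}(V)^{\otimes N,S_N}$ is the image of $\mathbb{C}[S_N]$.

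Third, I would translate back to $V^\vee$ through the $GL(V)$-equivariant isomorphism above, keeping track of the Koszul signs. This recovers exactly the listed contractions together with their $S_N$-transforms.

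The main obstacle is the density and spanning claim in the second step. The odd coordinates make $GL(V)$ a supergroup and not an honest variety, so ordinary Zariski density does not make sense here. I would first work with the functor of points over $\Lambda=\operatorname{colim}\Lambda_i$. If that causes trouble, I would instead use the Harish-Chandra pair equivalence to check invariance separately under $GL(V)_0$ and under $\mathfrak{gl}(V)_1$. In that version the odd part acts as derivations, and one has to show that these commute with $\mathbb{C}[S_N]$ only through the signed permutations.
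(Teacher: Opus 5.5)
The paper gives no proof of this statement. It is quoted as the super analogue of the classical first fundamental theorem, with no argument and no reference for the super case, and the text moves straight on to identifying the pairings with supertraces. Your proposal therefore supplies the missing proof rather than competing with one. Your route is the standard one and it is correct: use rigidity to reduce to $\operatorname{End}_{GL(V)}(V^{\otimes N})$, then apply super Schur--Weyl duality in the manner of Sergeev and Berele--Regev, with the double centralizer theorem for the semisimple image of $\mathbb{C}[S_N]$. It also brings out two points the paper's formulation hides:
\begin{itemize}
\item The contractions are invariant only after the Koszul signs from the braiding are inserted, as you do. The displayed formula in the statement omits them.
\item The $\mathbb{C}$-points of $GL(V)$ are genuinely insufficient when $V_0$ and $V_1$ are both nonzero. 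The tensors $g^{\otimes N}$ with $g\in GL(V_0)\times GL(V_1)$ span only the symmetric tensors on $\operatorname{End}(V)_{\bar 0}$. Their commutant contains, for instance, the projection onto $V_0^{\otimes N}$, so it is strictly larger than the image of $\mathbb{C}[S_N]$.
\end{itemize}

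The obstacle you identified can be closed without any density argument, in four steps.
\begin{itemize}
\item Invariants are automatically even, because $\mathrm{diag}(1_{V_0},-1_{V_1})\in GL(V)$ acts as the parity operator.
\item An even $T\in\operatorname{End}(V^{\otimes N})$ commutes with multiplication by $\Lambda$. Hence $T$ commutes with $g^{\otimes N}$ on $V^{\otimes N}\otimes\Lambda$ if and only if it commutes with every $\Lambda$-coefficient of $g^{\otimes N}$.
\item For any even $X\in\operatorname{End}(V)\otimes\Lambda$, the element $t\cdot 1+X$ is invertible for all but finitely many $t\in\mathbb{C}$. Moreover, $X^{\otimes N}$ is the constant term of the polynomial $(t\cdot 1+X)^{\otimes N}$ in $t$. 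A Vandermonde argument then puts the coefficients of $X^{\otimes N}$ in the span of the coefficients of the $g^{\otimes N}$ with $g$ invertible.
\item Take $X=X_0+\sum_j\theta_jY_j$ with $X_0\in\operatorname{End}(V)_{\bar 0}$, $Y_j\in\operatorname{End}(V)_{\bar 1}$ and distinct odd generators $\theta_j$. The coefficient of $\theta_{j_1}\cdots\theta_{j_b}$ is a nonzero multiple of the signed symmetrization of $X_0^{\otimes(N-b)}\otimes Y_{j_1}\otimes\cdots\otimes Y_{j_b}$. Ordinary polarization in $X_0$ then produces all of $(\operatorname{End}(V)^{\otimes N})^{S_N}$, and your double-centralizer step finishes the proof.
\end{itemize}

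Finally, you can drop the opening sentence of your first step and the appeal to polarization. The statement already concerns multilinear functionals on $V^{\otimes N}\otimes(V^\vee)^{\otimes N}$. $\operatorname{End}(V)$ only enters in the paper's later passage from vector invariants to matrix invariants.
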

It remains to show, that the pairings are still given by traces, just like in \cite{procesi1976invariants}. 
\begin{proposition}\label{traces}
    Let V be a $p|q$ dimensional super vector space and $V^\vee$ its dual. We take $\langle \cdot, \cdot \rangle: V \otimes V^\vee \rightarrow k$, then
    \begin{align*}
        \langle v,\phi \rangle = \operatorname{str}( v \otimes \phi )
    \end{align*}
\end{proposition}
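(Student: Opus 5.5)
The plan is to reduce the identity to the rank-one case $\operatorname{End}(V)\cong V\otimes V^\vee$ and compare both sides there. Both sides of the claimed identity are bilinear in $(v,\phi)$, so it suffices to check on homogeneous basis vectors. Choose a homogeneous basis $e_1,\dots,e_{p+q}$ of $V$ with parities $|e_i|$ and the dual basis $e^1,\dots,e^{p+q}$ of $V^\vee$, so $\langle e_i, e^j\rangle=\delta_{ij}$.

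\textbf{Step 1: the operator attached to $v\otimes\phi$.} Identify $v\otimes\phi$ with the endomorphism
\[
w\longmapsto v\,\langle w,\phi\rangle ,
\]
or its Koszul-signed variant $w\mapsto (-1)^{|\phi||w|}v\,\phi(w)$. Whichever convention the paper's braiding isomorphism in $\mathrm{Ab}_{\mathbb{Z}_2}$ dictates must be used consistently; I will take the one that sends $v\otimes\phi$ to a map built using the symmetric monoidal structure. Under this identification, $e_i\otimes e^j$ becomes the elementary matrix $E_{ij}$, up to a sign depending on $|e_i|,|e^j|$.

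\textbf{Step 2: compute the supertrace.} Recall $\operatorname{str}(M)=\sum_{i\ \mathrm{even}}M_{ii}-\sum_{i\ \mathrm{odd}}M_{ii}$. Off-diagonal $E_{ij}$ with $i\neq j$ have zero supertrace, matching $\langle e_i,e^j\rangle=0$. For $i=j$ one gets $\operatorname{str}(e_i\otimes e^i)=(-1)^{|e_i|}\cdot(\text{sign from Step 1})$.

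\textbf{Step 3: match signs (the main obstacle).} The crux is that the Koszul sign acquired in Step 1, namely $(-1)^{|e_i||e^i|}=(-1)^{|e_i|}$, cancels the supertrace sign $(-1)^{|e_i|}$, giving exactly $1=\langle e_i,e^i\rangle$. This cancellation is also the conceptual reason the supertrace, and not the ordinary trace, is the categorical trace in $\mathrm{Ab}_{\mathbb{Z}_2}$: evaluation $V\otimes V^\vee\to k$ composed with coevaluation involves one braiding, which contributes precisely this sign.

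The risk is the convention in Step 1. I would fix it by defining the pairing as evaluation composed with the braiding, $\langle v,\phi\rangle=(-1)^{|v||\phi|}\phi(v)$, and then verify directly that the signs cancel. If the paper instead uses $\phi(v)$ directly, the sign should be absorbed into the identification $V\otimes V^\vee\cong\operatorname{End}(V)$ instead.

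\textbf{Step 4: extend to products.} By linearity the identity holds for all $v,\phi$. For use in the first fundamental theorem, the products $\prod_i\langle v_i,\phi_{\sigma(i)}\rangle$ then become supertraces of products of the corresponding rank-one operators along the cycles of $\sigma$, exactly as in Procesi. Each cycle contributes $\operatorname{str}(M_{i_1}\cdots M_{i_k})$, with the Koszul signs from reordering odd factors tracked in the same way.
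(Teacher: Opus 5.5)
Your proposal is correct and follows the paper's argument. You expand in a homogeneous basis and observe that the single Koszul sign $(-1)^{|e_i|}$ on each diagonal term is exactly the supertrace sign, so the proof comes down to putting that sign in exactly one of the two places, the pairing or the identification $V\otimes V^\vee\cong\operatorname{End}(V)$. Your first listed candidate, $w\mapsto v\langle w,\phi\rangle$ built from the statement's own pairing, puts the sign in neither place or in both, and gives $\operatorname{str}(v\otimes\phi)=(-1)^{|v|}\langle v,\phi\rangle$; discard it and keep the convention from your Step 3 or your fallback. The paper uses your fallback convention: the sign sits in the pairing, $\langle v,\varphi\rangle=\varphi\cdot v=\sum_i(-1)^{|e_i||\varphi_i|}a_ib_i$, compared against the unsigned matrix $(a_ib_j)$ of $v\otimes\varphi$, and it computes with Grassmann coefficients directly rather than first reducing to basis vectors by bilinearity.
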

\begin{proof}
    We prove this by calculation. Take a homogeneous basis $\left\{e_j\right\}$ for $V$ and let $\left\{\varphi_j\right\}$ be its dual basis. We take a vector $v \otimes \varphi \in V \otimes V^\vee$ and compute the coefficients of the associated matrix $A$. Let 
\begin{align*}
    v&=\sum_i a_i e_i \quad, \ \text{such that}\ \forall |e_i|=\{0,1\}: \  a_i \in \Lambda_{\{0,1\}} \\
    \varphi&=\sum_j b_j \varphi_j \quad, \ \text{such that}\ \forall |\varphi_j|=\{0,1\}: \  b_j \in \Lambda_{\{0,1\}} 
\end{align*}
and we compute 
    \begin{align*}
      v \cdot \varphi =  \sum_{ij} a_i b_j e_i \otimes \varphi_j
    \end{align*}
    on the other hand we can also compute 
    \begin{align*}
        \langle v, \varphi \rangle = \varphi \cdot v = \sum_{i} (-1)^{|e_i||\varphi_i|}a_ib_i . 
    \end{align*}
    It is easy to see that 
    \begin{align*}
        \operatorname{str}(A) = \langle v, \varphi \rangle
    \end{align*}
\end{proof}
\begin{lemma}
    Let $V$ be a super vector space. Then the space of multilinear functions on $V^m \oplus (V^\vee)^m$ is isomorphic to the space of multilinear functions on $\operatorname{End}(V)^m$
\end{lemma}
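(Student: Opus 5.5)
The plan is to build the isomorphism in three stages, via the canonical identifications
\begin{align*}
    \operatorname{End}(V) \cong V \otimes V^\vee, \qquad \operatorname{End}(V)^{\otimes m} \cong V^{\otimes m} \otimes (V^\vee)^{\otimes m}.
\end{align*}

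\textbf{Stage 1: polynomial level.} By definition of the polynomial ring on a super vector space, $k[\operatorname{End}(V)^m]$ is the supersymmetric algebra on the dual of $\operatorname{End}(V)^m$. The multilinear functions, i.e.\ the multihomogeneous component of multidegree $(1,\dots,1)$, are therefore exactly
\begin{align*}
    \left(\operatorname{End}(V)^{\otimes m}\right)^\vee.
\end{align*}
In the same way, the multilinear functions on $V^m \oplus (V^\vee)^m$ that are linear in each of the $2m$ slots form the space $\left(V^{\otimes m}\otimes (V^\vee)^{\otimes m}\right)^\vee$.

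\textbf{Stage 2: transporting along $\operatorname{End}(V)^{\otimes m} \cong V^{\otimes m}\otimes (V^\vee)^{\otimes m}$.} Concretely:
\begin{enumerate}[label=(\roman*)]
    \item Use $\operatorname{End}(V) \cong V \otimes V^\vee$, $v\otimes\phi \mapsto (w\mapsto v\,\phi(w))$. This is an even isomorphism for finite-dimensional $V$: pick a homogeneous basis $e_i$ with dual basis $\varphi_j$, and send $e_i\otimes\varphi_j$ to the elementary matrix $E_{ij}$, which has parity $|e_i|+|e_j|$.
    \item Take $m$-fold tensor powers. Reorder factors $V\otimes V^\vee\otimes V\otimes V^\vee\otimes\cdots$ into $V^{\otimes m}\otimes (V^\vee)^{\otimes m}$ using the braiding of $\mathrm{Ab}_{\mathbb{Z}_2}$ from the symmetric monoidal structure.
    \item Dualize.
\end{enumerate}
Each map is a natural isomorphism in the symmetric monoidal category of super vector spaces, so the composite is $\operatorname{GL}(V)$-equivariant. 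This is exactly what the paper needs next: multilinear $\operatorname{GL}(V)$-invariants on the two sides correspond, so the first fundamental theorem can be transported to matrix invariants. Equivariance follows from naturality, given that $\operatorname{GL}(V)$ acts on $\operatorname{End}(V)$ by conjugation, which matches $g\cdot(v\otimes\phi)=gv\otimes \phi\circ g^{-1}$.

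\textbf{Stage 3: the trace formula.} I would also record the explicit form of the correspondence. Under it, the invariant $\prod_i\langle v_i,\phi_{\sigma(i)}\rangle$ maps to a product of supertraces of products of the $A_j = v_j\otimes\phi_j$ along the cycles of $\sigma$, up to sign. The $m=1$ case is Proposition~\ref{traces}. The general case follows by decomposing $\sigma$ into cycles and using
\begin{align*}
    (v\otimes\phi)(w\otimes\psi)=\phi(w)\,v\otimes\psi.
\end{align*}

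\textbf{Main obstacle.} The step I expect to be delicate is the sign bookkeeping. Three sign sources must be tracked: the Koszul signs from reordering tensor factors; the identification of the dual of a tensor product with the tensor product of duals, which carries its own sign convention; and the fact that multilinear functions on an odd part are antisymmetric rather than symmetric.

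To handle this, I would fix conventions once, with homogeneous basis vectors and signs determined by the braiding. I would then check on basis elements that the composite sends the dual basis of $\{E_{i_1j_1}\otimes\cdots\otimes E_{i_mj_m}\}$ bijectively, up to a sign, onto the dual basis of $\{e_{i_1}\otimes\cdots\otimes\varphi_{j_m}\}$. That gives the isomorphism, since the two spaces have the same super dimension, namely $(p|q)^{2m}$ in the sense of super dimension counting.
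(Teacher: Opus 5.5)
Your proposal is correct and takes essentially the paper's route: the paper's proof is just the chain $(V^{\otimes m}\otimes (V^\vee)^{\otimes m})^\vee \cong ((V\otimes V^\vee)^{\otimes m})^\vee \cong (\operatorname{End}(V)^{\otimes m})^\vee$, which is your Stages 1 and 2. Your braiding argument, the $\operatorname{GL}(V)$-equivariance and the super-dimension check fill in details the paper leaves implicit. Stage 3 and the sign bookkeeping are not needed for the bare isomorphism stated here; they belong to the paper's next theorem, which expresses $\mu_\sigma$ through supertraces.
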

\begin{proof}
    The space of multilinear functions on $V^m \oplus (V^\vee)^m$ is given by 
    \begin{align}
       ( V^{\otimes m} \otimes (V^\vee)^{\otimes m})^\vee \cong ((V \otimes V^\vee)^{\otimes m})^\vee \cong (\operatorname{End}(V)^{\otimes m})^\vee.
    \end{align}
    We immediately recognize that the right hand side are the multilinear functions on $\operatorname{End}(V)^m$.
\end{proof}
\begin{remark}
    All these spaces naturally admit a $\operatorname{GL}(V)$-action. Considering all these vector spaces as $\operatorname{GL}(V)$-representations, it is also easy to see, that these identifications are a $GL(V)$-equivariant isomorphism.
\end{remark}
We can now translate the multilinear invariant in vector form into the multilinear invariant in matrix form. Consider $\operatorname{End}(V)^{\otimes i}$ and choose $\sigma \in Sym_i$. Decompose $\sigma $ into disjoint cycles 
\begin{align}
    \sigma = (i_1i_2...i_k)(j_1j_2...j_n)...(t_1...t_e)
\end{align}
including the ones of length 1.
\begin{theorem}
    Given $A_1,...,A_i \in \operatorname{End}(V)$, we have: 
    \begin{align}
        \mu_\sigma (A_1 \otimes ... \otimes A_i) = \operatorname{str}(A_{i_1}... A_{i_k})\operatorname{str}(A_{j_1}... A_{j_n})\operatorname{str}(A_{t_1}... A_{t_e})
    \end{align}
\end{theorem}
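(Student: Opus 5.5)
The claim is that under the identification $V^{\otimes i}\otimes (V^\vee)^{\otimes i}\cong \operatorname{End}(V)^{\otimes i}$ of the preceding lemma, the multilinear invariant $\mu_\sigma$ from the first fundamental theorem, $v_1\otimes\cdots\otimes v_i\otimes\phi_1\otimes\cdots\otimes\phi_i\mapsto \prod_k\langle v_k,\phi_{\sigma(k)}\rangle$, becomes a product of supertraces along the cycles of $\sigma$, up to a Koszul sign. I will follow Procesi's classical argument and track signs through the braiding of $\mathrm{Ab}_{\mathbb{Z}_2}$.

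\textbf{Reduction to rank-one endomorphisms.} Both sides are multilinear in $A_1,\dots,A_i$. Moreover $\operatorname{End}(V)=V\otimes V^\vee$ is spanned by decomposable elements $A_k=v_k\otimes\phi_k$ with $v_k,\phi_k$ homogeneous. It therefore suffices to check the identity on such tensors. By the previous lemma and the remark after it, this identification is $\operatorname{GL}(V)$-equivariant, so $\mu_\sigma$ is a well-defined invariant on $\operatorname{End}(V)^{\otimes i}$.

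\textbf{Cycles factor.} Since $\mu_\sigma$ is a product over $k$ of pairings $\langle v_k,\phi_{\sigma(k)}\rangle$, I will regroup the factors according to the disjoint cycles of $\sigma$. Each factor is an even-or-odd scalar, so reordering costs only a sign. The key step is then a single cycle $(j_1\dots j_n)$, where I must show
\[
\langle v_{j_1},\phi_{j_2}\rangle\langle v_{j_2},\phi_{j_3}\rangle\cdots\langle v_{j_n},\phi_{j_1}\rangle=\pm\operatorname{str}(A_{j_1}\cdots A_{j_n}),
\]
with the cycle direction matched to the convention $\sigma(k)$ (which may need reversing to $\operatorname{str}(A_{j_n}\cdots A_{j_1})$). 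For this I compose the rank-one maps: $(v\otimes\phi)(w\otimes\psi)=\pm\langle w,\phi\rangle\, v\otimes\psi$. Inductively, $A_{j_1}\cdots A_{j_n}=\pm\big(\prod_{k=2}^{n}\langle v_{j_k},\phi_{j_{k-1}}\rangle\big)v_{j_1}\otimes\phi_{j_n}$. Applying Proposition~\ref{traces} gives $\operatorname{str}(v_{j_1}\otimes\phi_{j_n})=\langle v_{j_1},\phi_{j_n}\rangle$, which closes the cycle. Fixed points are the case $n=1$, and this is exactly Proposition~\ref{traces}.

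\textbf{Main obstacle: the signs.} The real work is bookkeeping the Koszul signs. These come from three places: moving $\phi$'s past $v$'s in the identification $V^{\otimes m}\otimes(V^\vee)^{\otimes m}\cong(V\otimes V^\vee)^{\otimes m}$, composing rank-one operators, and reordering the cycle factors. I plan to show that all of them together equal the sign by which the braiding permutes $A_1\otimes\cdots\otimes A_i$ into the cycle order. That is the standard sign of $\sigma$ acting on $\operatorname{End}(V)^{\otimes i}$. The cleanest route is to use that the symmetric monoidal structure is coherent, so the total sign depends only on the permutation of homogeneous tensor factors. It should then suffice to check one transposition and one cyclic shift, using that the supertrace is supercyclic: $\operatorname{str}(XY)=(-1)^{|X||Y|}\operatorname{str}(YX)$. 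Once $\mu_\sigma$ is defined with this braiding sign built in, the statement holds exactly as written.
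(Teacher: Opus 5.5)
Your proposal follows the paper's proof, which is Procesi's classical argument: reduce by multilinearity to rank-one $A_k$, regroup the pairings along the cycles of $\sigma$, collapse each cycle by composing rank-one operators, and close it with Proposition~\ref{traces}; the paper does exactly this but writes no Koszul signs at all, so your sign bookkeeping refines its argument rather than departing from it. One slip there: the sign is the Koszul sign of reordering $A_1\otimes\cdots\otimes A_i$ into the chosen cycle order, as you first say, and not a sign attached to $\sigma$ alone acting on $\operatorname{End}(V)^{\otimes i}$, because rotating a cycle changes the right-hand side by the supercyclicity sign (e.g.\ $\operatorname{str}(A_1A_2)=-\operatorname{str}(A_2A_1)$ for odd $A_1,A_2$); in particular the sign is trivial when all $A_k$ are even.
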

\begin{proof}
    Since both sides are multilinear, it is enough to prove it for the case where $A_i= \phi_i \otimes v_i$.
    \begin{align}
        \mu_\sigma (A_1 \otimes ... \otimes A_i) &= \prod_i \langle \phi_{\sigma(i)} , v_i\rangle \\
        &=\langle \phi_{i_2}, v_{i_1} \rangle \langle \phi_{i_3}, v_{i_2}\rangle ... \langle \phi_{t_1}, v_{t_e}\rangle
    \end{align}

Consider, for instance, the product
\begin{align}
M &= \langle \varphi_{i_2}, v_{i_1} \rangle \langle \varphi_{i_3}, v_{i_2} \rangle \cdots \langle \varphi_{i_k}, v_{i_{k-1}} \rangle \langle \varphi_{i_1}, v_{i_k} \rangle.
\end{align}

One verifies immediately that
\begin{align}
\varphi_{i_1} \otimes v_{i_1} \cdot \varphi_{i_2} \otimes v_{i_2}, \ldots, \varphi_{i_k} \otimes v_{i_k}
&= \langle \varphi_{i_2}, v_{i_1} \rangle \langle \varphi_{i_3}, v_{i_2} \rangle \cdots \langle \varphi_{i_k}, v_{i_{k-1}} \rangle \varphi_{i_1} \otimes v_{i_k}.
\end{align}
Therefore,
\begin{align}
M = \mathrm{tr}(A_{i_1} A_{i_2} \cdots A_{i_k}),
\end{align}
\end{proof}
The last bit of prep work we have to do now is simply state the first fundamental theorem of invariants for $\operatorname{OSp}(1|2)$. 
Let $V$ be a finite dimensional super vector space with $dim(V)=m|2n$ and let $B$ be the symmetric pairing preserved under $\operatorname{OSp}(V)$. Deligne, Lehrer and Zhang proved a first fundamental theorem for $\operatorname{OSp}(V)$ \cite{deligne2018first}. 
\begin{theorem}[\cite{deligne2018first}] \label{FFT}
    Let $V$ be as above, then the multilinear $\operatorname{OSp}(V)$-invariant functions on $V^N$ for $N$ even are given by 
    \begin{align}
        \begin{split}
            V^{\otimes N} &\longrightarrow \mathbb{C} \\
        v_1 \otimes ... \otimes v_N &\longmapsto B(v_1,v_2)\cdot...\cdot B(v_{N-1},v_N),
        \end{split}
    \end{align}
    and all its permutations in $Sym_N$.
\end{theorem}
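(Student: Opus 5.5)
The statement to be proved is the first fundamental theorem for $\operatorname{OSp}(V)$, which the paper attributes to Deligne, Lehrer and Zhang. I will not reprove the whole theorem from scratch. Instead I will reduce it to the $\operatorname{GL}(V)$ first fundamental theorem stated above, together with the symmetric-pair (Brauer-type) structure. This follows the classical route for $\operatorname{O}_n$ and $\operatorname{Sp}_{2n}$.

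\textbf{Step 1: identify the invariants with endomorphisms.} Because $B$ is even and non-degenerate, it gives an $\operatorname{OSp}(V)$-equivariant isomorphism $V \cong V^\vee$. Multilinear invariants on $V^N$ are then the same as $\operatorname{OSp}(V)$-invariants in $(V^{\otimes N})^\vee$. If $N$ is odd, the central element $-\operatorname{id}$ acts by $-1$, so such invariants vanish. For $N=2M$ even, we write
\begin{align*}
(V^{\otimes 2M})^{\vee} \cong \operatorname{Hom}(V^{\otimes M}, V^{\otimes M}),
\end{align*}
using $B$ to turn the last $M$ factors into outputs. Under this identification, $\operatorname{OSp}(V)$-invariants become $\operatorname{End}_{\operatorname{OSp}(V)}(V^{\otimes M})$.

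\textbf{Step 2: pass through $\operatorname{GL}(V)$.} The key idea is the Weyl-style restitution trick. The quotient $\operatorname{GL}(V)/\operatorname{OSp}(V)$ is identified with the space of nondegenerate even supersymmetric forms, which is an open subsuperscheme of $S^2(V^\vee)_{\bar 0}$. Consequently, an $\operatorname{OSp}(V)$-invariant multilinear form $F$ on $V^N$ is the same as a $\operatorname{GL}(V)$-equivariant map from that space of forms to $(V^{\otimes N})^\vee$. We evaluate such a map at the generic form $B$ and clear denominators by a power of $\operatorname{Ber}$. This produces a $\operatorname{GL}(V)$-invariant polynomial on $S^2(V^\vee) \oplus V^N$ that is multilinear in the vectors.

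After polarizing in the $S^2(V^\vee)$ slot via the Polarization Lemma, this becomes a multilinear $\operatorname{GL}(V)$-invariant on $(V^\vee)^{2k} \oplus V^N$. The $\operatorname{GL}(V)$ first fundamental theorem then says it is a sum of products of pairings $\langle v_i, \phi_j\rangle$. Restituting with $\phi$ replaced by $B$, each such product becomes a product of $B(v_i, v_j)$ over a perfect matching of $\{1,\dots,N\}$. Here the symmetry of $B$, including the Koszul signs, is exactly what makes every permutation reduce to a matching.

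\textbf{Main obstacle.} The hard step is justifying the descent in Step 2 in the super setting. The homogeneous space $\operatorname{GL}(V)/\operatorname{OSp}(V)$ must be shown to be the affine open orbit of nondegenerate forms in the supersymmetric square. In addition, restricting to this open orbit must not create new invariants; this requires a localization argument in $\operatorname{Ber}$ rather than $\det$.

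I would first try to sidestep this with a purely combinatorial count. The invariants in Step 1 equal $\operatorname{End}_{\operatorname{OSp}(V)}(V^{\otimes M})$. The Brauer algebra $B_M(m-2n)$ maps into this endomorphism algebra, and its image is spanned exactly by the matchings above. The theorem is then the surjectivity of this map.

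To prove surjectivity, I would use the fact that $\operatorname{Rep}(\operatorname{OSp}(V))$ is semisimple in the case of interest, by Sherman's theorem. This lets me compare characters, that is, dimensions of multiplicity spaces. The dimension of $\operatorname{End}_{\operatorname{OSp}(V)}(V^{\otimes M})$ is read off from the decomposition of $V^{\otimes M}$ into irreducibles. The dimension of the Brauer image is computed from the semisimple quotient of $B_M$. I expect this dimension match to be the genuinely difficult part.
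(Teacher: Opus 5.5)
The paper gives no proof of this statement; it quotes it from Deligne--Lehrer--Zhang. Your argument therefore has to stand on its own, and it does not yet: both of your routes stop at the step that carries the content. Step~1 is fine. The problem is in Step~2, where the proposed mechanism, clearing denominators by a power of $\operatorname{Ber}$, cannot work.

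For the Gram supermatrix $\beta=\begin{pmatrix}A&\Theta\\ \Theta'&D\end{pmatrix}$ of a form, $\operatorname{Ber}(\beta)=\det(A-\Theta D^{-1}\Theta')\det(D)^{-1}$ is not a polynomial on the space of forms. For $\dim V=1|2$ it equals $(ab+c\,\theta_1\theta_2)/b^{3}$, where $a,b$ are the even coordinates, $\theta_1,\theta_2$ the odd ones, and $c\neq 0$ depends on sign conventions. The nondegenerate locus is $\{\det A\cdot\det D\neq 0\}$, and $\det A\cdot\det D$ is not a semi-invariant. In fact $\operatorname{GL}(V)$ has an open orbit and its characters are powers of $\operatorname{Ber}(g)$. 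Hence every polynomial semi-invariant on the space of forms is a constant multiple of some $\operatorname{Ber}(\beta)^k$. For $m,n>0$ this is a polynomial only when $k=0$, since already its reduced part $(\det A/\det D)^k$ is not. So there is nothing with which to clear denominators equivariantly.

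Even classically, multiplying by $\det^k$ produces a relative invariant of weight $\det^{\pm 2k}$, not an invariant. The $\operatorname{GL}(V)$ first fundamental theorem quoted in the paper says nothing about such relative invariants. What you actually need is that the equivariant map from the open orbit to $(V^{\otimes N})^\vee$ has no poles at all. That is, it must extend to a polynomial map on the whole space of forms, which is then homogeneous of degree $N/2$ by the scalar action. Given that, your polarize, $\operatorname{GL}$-FFT, restitute steps go through with a genuine invariant. Proving this extension across the degeneracy locus in the super setting is the nontrivial input, and the proposal does not supply it.

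The Brauer fallback does not close the gap either. Sherman's theorem gives semisimplicity only when $m\le 1$ or $n=0$, whereas the statement is for arbitrary $m|2n$. Even for $m=1$ the dimension count is circular. The image of $B_M(1-2n)$ in $\operatorname{End}(V^{\otimes M})$ is $B_M/K$, with $K$ the kernel of the representation. This kernel is nonzero for large $M$, since $\dim B_M=(2M-1)!!$ eventually exceeds $(1+2n)^{2M}$, and $B_M(1-2n)$ is in general not semisimple.

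Describing $K$ is the second fundamental theorem. The semisimple quotient $B_M/\operatorname{rad}B_M$ does not determine $\dim(B_M/K)$, because you do not know in advance that $K\supseteq\operatorname{rad}B_M$, nor which simple components survive. At best it yields an upper bound on the image. Surjectivity onto $\operatorname{End}_{\operatorname{OSp}(V)}(V^{\otimes M})$ needs a lower bound, and producing one, for instance by exhibiting enough linearly independent diagram images, is essentially the theorem itself.
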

This was also proven by Lehrer and Zhang in \cite{lehrer2017first} with a different method of proof earlier than the stated result mostly in the language of Brauer diagrams. With that method they also computed the polynomial invariants ones from the multilinear ones, which given by the monomials in the multilinear invariants. 
\begin{theorem}
    Let $V$ be as above, then the polynomial $\operatorname{OSp}(V)$-invariants are polynomials in the variables $B(v_i,v_j) \coloneqq (i,j)$ $\forall 1 \leq i < j \leq m$
\end{theorem}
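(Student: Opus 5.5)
The statement just given is the polynomial version of the first fundamental theorem for $\operatorname{OSp}(V)$. I plan to deduce it from the multilinear version, Theorem~\ref{FFT}, by polarization and restitution, exactly as in the classical orthogonal/symplectic case.

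First I reduce to multihomogeneous invariants. The action of $\operatorname{OSp}(V)$ on $V^m = V\oplus\dots\oplus V$ commutes with the rescaling of each factor separately. Hence $k[V^m]^G$ splits as a direct sum of its multihomogeneous components $k[V^m]^G_h$, using the multigrading introduced above. So it suffices to treat an invariant $f$ of multidegree $h=(h_1,\dots,h_m)$, with total degree $d=\sum h_i$.

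Next I polarize. Fully polarizing $f$ in each slot $V_i$ separately gives a multilinear function $F$ on $V^{d}$. By the Lemma on $\mathcal{P}$, this $F$ is $\operatorname{GL}(V)$-equivariant in its construction, hence $G$-invariant, and it is symmetric within each block of $h_i$ arguments. Restituting by setting the $h_i$ arguments in block $i$ equal to $v_i$ recovers $h_1!\cdots h_m!\, f$. This is the multi-block analogue of the Proposition on restitution, and it uses $\operatorname{char}k=0$.

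Then I apply Theorem~\ref{FFT} to $F$. If $d$ is odd there are no nonzero invariants; I will justify this via $-1\in\operatorname{OSp}(V)$, which acts on multilinear forms of degree $d$ by $(-1)^d$. If $d$ is even, $F$ is a linear combination of the functions
\[
\prod_{s} B\bigl(w_{\sigma(2s-1)},w_{\sigma(2s)}\bigr), \qquad \sigma\in Sym_d .
\]
Restituting each such term replaces every $w$ in block $i$ by $v_i$. Each term therefore becomes a monomial in the $B(v_i,v_j)$, including the diagonal values $B(v_i,v_i)$, possibly with a sign coming from the Koszul rule. Dividing by $h_1!\cdots h_m!$ shows that $f$ is a polynomial in the $(i,j)$.

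The main obstacle is the super sign bookkeeping. Because $B$ is supersymmetric, $B(v,w)=(-1)^{|v||w|}B(w,v)$. Also, $V^{\otimes N}$ carries the Koszul-signed $Sym_N$-action rather than the plain one. Two things must be checked. The first is that polarization and restitution are still compatible with this signed symmetric group action. The second is that when the $w$'s are identified, the products of pairings collapse to monomials and do not cancel in a way that would break the restitution identity. I would handle this by working on $\Lambda$-points, where all coordinates become even elements of $\Lambda_0$ or odd elements of $\Lambda_1$ and the identities become commutative-algebra identities with explicit signs. I would then verify that $\mathcal{R}\mathcal{P}f=h!\,f$ holds verbatim there. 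Finally, the diagonal terms $(i,i)$ should be included in the list of generators alongside the $(i,j)$ with $i<j$, since $B(v,v)$ need not vanish for a symmetric form.
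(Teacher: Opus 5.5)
Your proposal is correct and follows the route the paper indicates. The paper gives no proof of its own; it attributes the statement to Lehrer--Zhang as the passage from the multilinear FFT (Theorem~\ref{FFT}) to polynomial invariants by polarization and restitution, with restituted Brauer invariants becoming monomials in the $(i,j)$, which is exactly your argument. Your correction that the diagonal invariants $(i,i)=B(v_i,v_i)$ must be included is right: the paper itself later counts $10$ generators for four copies of $V$, i.e.\ all pairs $i\le j$. Note also that your odd-degree step via $-1$ uses the full group preserving $B$ rather than the $\operatorname{Ber}=1$ subgroup, since $\operatorname{Ber}(-1)=-1$ in dimension $1|2$.
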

We can now prove a second fundamental theorem for the polynomial invariants. to do that, we need to prove a small lemma first.
\begin{lemma}
    Let $G\coloneqq (B(v_i,v_j))_{i,j}$ then $\operatorname{det}(G)\neq 0 $ iff $v_1,...v_N$ are linearly independent.
\end{lemma}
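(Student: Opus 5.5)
We prove both directions using the Gram matrix $G=(B(v_i,v_j))_{i,j}$ of the bilinear form $B$. Throughout we work with $N\le \dim V$ and read "linearly independent" in the sense appropriate to the ambient space: over $\mathbb{C}$ for the bosonic part, and via the even/odd splitting of $\Lambda$-points otherwise. The two directions are \emph{independent $\Rightarrow$ $\det G\neq 0$} (using non-degeneracy of $B$) and \emph{$\det G\neq 0 \Rightarrow$ independent} (by contraposition, using linearity of $B$).

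For \emph{dependent $\Rightarrow$ $\det(G)=0$}, suppose $\sum_j c_j v_j=0$ with not all $c_j$ zero. Bilinearity of $B$ gives, for every $i$,
\begin{align*}
\sum_j c_j B(v_i,v_j) = B\Big(v_i,\sum_j c_j v_j\Big) = 0 .
\end{align*}
Hence $c=(c_j)$ is a nonzero vector in the kernel of $G$. Over a field this forces $\det G=0$. In the super setting, where $G$ has entries in $\Lambda$, we first reduce modulo the nilpotent ideal $J$: $\det G$ is invertible if and only if its body $\det G_{\mathrm{red}}$ is nonzero. So it suffices to argue with the reduced vectors and reduced matrix, where the field argument applies. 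This reduction is the same $(-)_{\mathrm{red}}$ passage used in the remark on $\mathrm{SL}_2 \hookrightarrow \operatorname{OSp}_{1|2}$.

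For \emph{independent $\Rightarrow$ $\det(G)\neq 0$}, suppose $Gc=0$ for some $c$. Put $w=\sum_j c_j v_j$. Then $B(v_i,w)=0$ for all $i$, so $B(u,w)=0$ for every $u$ in $W=\operatorname{span}(v_1,\dots,v_N)$. If $N=\dim V$, then $W=V$. Non-degeneracy of $B$ gives $w=0$, and independence of the $v_j$ gives $c=0$, so $G$ is injective and $\det G\neq 0$.

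This direction is where I expect the real obstacle. For $N<\dim V$, the restriction of $B$ to $W$ can be degenerate, for instance when $W$ is an isotropic line spanned by an even vector $v$ in the symplectic block. Then $G=(B(v,v))=(0)$ even though $v$ is independent. The lemma as literally stated therefore needs $W$ to be non-degenerate for $B$, or $N=\dim V$. I would first try to prove it under the hypothesis that $v_1,\dots,v_N$ is a (homogeneous) basis of $V$, since that is what the second fundamental theorem needs in order to identify the relations among the $(i,j)$. In that case $\det G=\det(P)^2\det(J)$ up to sign, where $P$ is the change-of-basis matrix and $J$ is the form's matrix. More precisely, in the super case one should use the Berezinian: $\operatorname{Ber}(G)=\operatorname{Ber}(P)^2\operatorname{Ber}(J)$, which is invertible exactly when $P$ is invertible. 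I would then state the general-$N$ version with the extra non-degeneracy hypothesis on $\operatorname{span}(v_i)$ and use the same computation restricted to $W$.
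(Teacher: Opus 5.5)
Your diagnosis is right, and it goes further than the paper's own proof. The implication ``independent $\Rightarrow \det G\neq 0$'' is false as stated. Your isotropic-line counterexample works verbatim in the paper's matrix convention: the even $2\times 2$ block of $J$ is skew, so $B(v,v)=0$ for $v=(1,0,0)^{T}$, giving $G=(0)$. In fact $B$ is skew on all even $\Lambda$-points there, so $G$ is skew-symmetric and $\det G=0$ for every odd $N$.

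The paper's argument for this direction fails exactly where you expected. From a row relation it obtains $B(w,v_j)=0$ for all $j$, hence $B(w,w)=0$ with $w=\sum_i a_iv_i$, and then appeals to non-degeneracy. But non-degeneracy of $B$ needs $B(w,u)=0$ for all $u\in V$, which is only available when the $v_i$ span $V$. Your repair is the correct one: over a field, $\det G\neq 0$ if and only if the $v_i$ are independent \emph{and} $B$ restricted to $\operatorname{span}(v_1,\dots,v_N)$ is non-degenerate. For the other implication (dependent $\Rightarrow \det G=0$), your kernel-vector argument is the same as the paper's row-relation argument.

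What still needs tightening is the passage to $\Lambda$-points. Reducing modulo nilpotents only shows that $\det G$ is nilpotent, i.e.\ not invertible; it does not show that $\det G$ vanishes, and over $\Lambda$ these genuinely differ. For example, take $\dim V=1|2$, $B(e_0,e_0)=1$, and $e_1,e_2$ an odd symplectic pair. The even points $v_1=e_0$ and $v_2=e_0+\theta_1e_1+\theta_2e_2$ satisfy the nontrivial relation $\theta_1\theta_2(v_2-v_1)=0$. Yet $\det G=B(v_2-v_1,v_2-v_1)=\pm 2\theta_1\theta_2\neq 0$.

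So with the literal ``$\det G\neq 0$'', even the easy direction fails over $\Lambda$. What your argument actually proves, and what you should state, is this: $\det G$ is invertible if and only if the bodies of the $v_i$ (which lie in $V_0$) are linearly independent and span a $B$-non-degenerate subspace.

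Finally, the paper only ever uses the easy direction, for the rank bound in the proof of the second fundamental theorem. So the basis case you single out is not what is needed there.
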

\begin{proof}
    We proof this by contrapositive. If $\operatorname{det}(G)=0$ then there exists a relation between the rows $R_i \coloneqq (B(v_i,v_1),...,B(v_i,v_N))$, i.e. 
    \begin{align}
        \sum_i a_iR_i =0
    \end{align}
    for some non-zero $a_i$. It follows 
    \begin{align}
        \sum_i a_iR_i = (B(\Sigma_i v_i, v_1),...,B(\Sigma_i v_i,v_N))=0
    \end{align}
    So each factor must be zero, thus consider the linear combination 
    \begin{align}
        \begin{split}
             a_1B(\Sigma_i v_i, v_1)+...+a_NB(\Sigma_i v_i, v_N)&= 0 \\
        B(\Sigma_i v_i,\Sigma_i v_i)& =0
        \end{split}
    \end{align}
    by non-degeneracy, they are linearly dependent. For the other direction, we also consider the contrapositive. if $v_1,...,v_N$ are linearly dependent. Let $a_i$ be the coefficient such that the linear combination of the vectors vanishes, then consider the sum of the rows 
    \begin{align}
        \begin{split}
              &=\sum_i a_iR_i  \\
        &=(B(\Sigma a_i v_i,v_1),...,B(\Sigma a_i v_i,v_N)) \\
        &=B(B(0,v_1),...,B(0,v_N)) = 0
        \end{split}
    \end{align}
    so there is a relation between the rows, thus the determinant must vanish.
\end{proof}
\begin{theorem}\label{SFT}
    Let everything be as above, then the polynomial ring of $\operatorname{OSp}(V)$-invariants is 
    \begin{align}
        \mathbb{C}[V^m]^{\operatorname{OSp}(V)}/I_n
    \end{align}
\end{theorem}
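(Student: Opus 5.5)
The plan is to prove Theorem~\ref{SFT} in the classical way: first identify the kernel of the map from a free polynomial ring onto the invariant ring, then show that $I_n$ generates it. Concretely, I read the statement as follows. Let $\mathbb{C}[(i,j)]$ be the polynomial ring in indeterminates $(i,j)$, $1\le i\le j\le m$, with the symmetry of $B$ imposed up to the Koszul sign. Let $\Phi:\mathbb{C}[(i,j)]\to \mathbb{C}[V^m]^{\operatorname{OSp}(V)}$ be given by $(i,j)\mapsto B(v_i,v_j)$. By the preceding theorem (polynomial invariants are polynomials in the $B(v_i,v_j)$, obtained from Theorem~\ref{FFT} via polarization and restitution), $\Phi$ is surjective. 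The content of the statement is that $\ker\Phi = I_n$, where $I_n$ is the ideal of relations of the Gram matrix $G=(B(v_i,v_j))$ coming from the dimension bound. Classically these are the $(2n+1)$-minors of $G$; in the super setting, for $\dim V = m|2n$, they should be their super analogue (the ideal generated by the appropriate Berezinian-type or hook-shaped minors).

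The first step is to show $I_n\subseteq\ker\Phi$. This is where the preceding lemma is used. A minor of $G$ indexed by $k$ vectors vanishes as soon as those $k$ vectors are linearly dependent. Once $k$ exceeds what the dimension of $V$ allows, every choice of $k$ vectors is forced to be dependent, at least over the reduced points. So each generator of $I_n$ vanishes identically as a function on $V^m$. For the odd directions, I would argue on $\Lambda$-points with $\Lambda=\operatorname{colim}\Lambda_i$, exactly as in the triangulation lemma: a polynomial identity holds on $V^m$ iff it holds on all $\Lambda$-points.

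The second step, the reverse inclusion $\ker\Phi\subseteq I_n$, is the main obstacle. My first attempt would be the Reynolds operator argument set up before Example~2. Regard $\mathbb{C}[(i,j)]$ as the coordinate ring of the supersymmetric $m\times m$ matrices $S$. The image of $V^m\to S$, $(v_i)\mapsto G$, is the closed subsuperscheme cut out by the rank condition. Its ideal is $I_n$ by a super determinantal-variety argument, which I would try to reduce to the classical second fundamental theorem for $\operatorname{O}_m\times\operatorname{Sp}_{2n}$ using $\operatorname{Gr}$ and the decomposition $k[G]=k[G_0]\otimes\bigwedge\mathfrak g_1^\vee$. Then $k[V^m]^{G}\cong k[S]/I_n$ follows from the exact sequence $0\to I\to k[X]\to k[Y]\to 0$ together with exactness of $(-)^G$ (linear reductivity of $\operatorname{OSp}(1|2n)$ by Sherman's theorem).

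If that reduction fails, the fallback is a multilinear argument. Polarize a relation $f\in\ker\Phi$ to a multilinear one, which is a linear combination of Brauer-type pairings $\prod B(v_{\sigma(2k-1)},v_{\sigma(2k)})$. The kernel of the map from the Brauer algebra action to $\operatorname{End}(V^{\otimes N})$ is known to be generated by a single idempotent, as in Lehrer--Zhang. The task is then to identify the restitution of that idempotent with a generator of $I_n$. This identification is the delicate point; the sign bookkeeping from the Koszul rule is where I expect errors to arise.
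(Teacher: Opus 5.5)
\textbf{There is a genuine gap, and it is in your Step~1.} Step~1 follows exactly the paper's route: its Gram-determinant lemma, then ``all $(m+2n+1)$-minors vanish''. In the super setting this argument fails, and so does the statement as you and the paper read it. Linear dependence ``over the reduced points'' only shows that a minor vanishes modulo nilpotents, and on $\Lambda$-points the nilpotent part survives.

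Here is a concrete counterexample. Take $V=\mathbb{C}^{1|2}$ with even basis vector $e_0$, odd basis $f_1,f_2$, and $B(e_0,e_0)=B(f_1,f_2)=1$. For $k$ purely odd vectors $v_i=\theta_if_1+\eta_if_2$, with independent $\theta_i,\eta_i\in\Lambda_1$, one has $B(v_i,v_j)=\pm(\theta_i\eta_j+\theta_j\eta_i)$. In the expansion of the determinant, only the two consistently oriented choices on each cycle of $\sigma$ survive, and each contributes the sign of that cycle, which cancels $\operatorname{sgn}(\sigma)$. Hence $\det\bigl(B(v_i,v_j)\bigr)=\pm\sum_{\sigma\in S_k}2^{\#\mathrm{cyc}(\sigma)}\prod_i\theta_i\eta_i=\pm(k+1)!\prod_i\theta_i\eta_i\neq 0$. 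Now adjoin $v_1=e_0$, which is orthogonal to the odd part. This gives four vectors in $\mathbb{C}^{1|2}$ with $\det G=\pm 24\,\theta_2\eta_2\theta_3\eta_3\theta_4\eta_4\neq 0$. So no Gram minor of any size is a relation, $I_n\not\subseteq\ker\Phi$, and the paper's corollary that $\det G=0$ on $V^4$ is false.

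The reverse inclusion also fails, so Step~2 cannot be completed for any ideal of Gram minors of a fixed size. For two vectors $v_i=x_ie_0+\theta_if_1+\eta_if_2$ one computes $\det G_2=\pm2\bigl(x_1^2\theta_2\eta_2+x_2^2\theta_1\eta_1-x_1x_2(\theta_1\eta_2+\theta_2\eta_1)\bigr)+6\,\theta_1\eta_1\theta_2\eta_2\neq 0$, yet $(\det G_2)^2=0$. Thus $\bigl((1,1)(2,2)-(1,2)^2\bigr)^2\in\ker\Phi$, even though there are no $4\times 4$ minors at all. This element spans the $\operatorname{GL}_k$-type $(4,4)$ in $S(S^2\mathbb{C}^k)$. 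It is killed because $S(V\otimes\mathbb{C}^k)=\bigoplus_\lambda S_\lambda(V)\otimes S_\lambda(\mathbb{C}^k)$ and $S_\lambda(\mathbb{C}^{m|2n})=0$ unless $\lambda_{m+1}\le 2n$.

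So the genuine relations are nilpotent, not rank conditions, and none of your tools reach them:
\begin{itemize}
\item No linear-dependence argument produces them.
\item The reduced image, and hence any reduction to $\operatorname{O}_m\times\operatorname{Sp}_{2n}$ through $\operatorname{Gr}$, only sees $\det G_2=0$.
\item Declaring in your first attempt at Step~2 that the ideal of the image is $I_n$ simply restates the theorem.
\end{itemize}
Your hedge that $I_n$ ``should be a super analogue'' is the right instinct, but you never specify that ideal, and Step~1 does not apply to it.

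You are right that the reverse inclusion is the real content. The paper disposes of it by noting that the $(i,j)$ generate the invariants, but that only gives surjectivity of $\Phi$, not injectivity modulo $I_n$. Your Brauer-algebra fallback via Lehrer--Zhang's idempotent is the viable route. Carried out correctly, however, it must produce relations like $(\det G_2)^2$, not Gram minors.
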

\begin{proof}
    We will be investigating the kernel of the morphism of rings 
\begin{align}
  \phi:  \mathbb{C}[(i,j)] \longrightarrow \mathbb{C}[V^m]^{\operatorname{OSp(V)}}.
\end{align}
Notice, that we can fit all the inner products into a matrix $G\coloneqq (B(v_i,v_j))_{i,j}$. One also sees that $G \in \operatorname{Mat}_{N \times N}(\Lambda)$. 
By the earlier lemma we follow that the maximal rank of this matrix is $n+2m$. It follows, that all $ (n+2m+1)\times (n+2m+1)$ minors must vanish, thus the morphism factors in the following way
\begin{center}
    \begin{tikzcd}
        \mathbb{C}[(i,j)] \arrow[r] \arrow[d] & \mathbb{C}[V^m]^{\operatorname{OSp(V)}} \\
        \mathbb{C}[(i,j)]/I_n \arrow[ur]
    \end{tikzcd}
    \end{center} 
It is easy, to see that $I_n \subset \operatorname{ker}(\phi)$.
Furthermore, let $f \in \operatorname{ker}(\phi)$. We use the previous factoring
\begin{center}
\begin{tikzcd}
    f \arrow[r] \arrow[d] & \phi(f)=0 \\
    f \operatorname{mod} I_n \arrow[ur]
\end{tikzcd}
\end{center} 
where $I_n$ is the ideal generated by all $(n+1) \times (n+1)$ minors. we need to check that $\operatorname{ker}(\phi) \subset I_n$, which means we have to determine 
\begin{align}
    f \operatorname{mod} I_n \longrightarrow 0.
\end{align}
We know that $\mathbb{C}[V^m]^{\operatorname{OSp}(V)}$ is also generated by all $(i,j)$, thus the morphism is injective, hence $f \operatorname{mod} I_n = 0$, therefore $\operatorname{ker}(\phi) = I_n$
\begin{align}
    \mathbb{C}[V^m]^{\operatorname{OSp}(V)}= \mathbb{C}[(i,j)]/I_n.
\end{align}
\end{proof}
As a corollary we obtain:
\begin{corollary}
    The only relation for the invariant ring of $\mathbb{C}[V^4]^{\operatorname{OSp}(1|2)}$ is $\operatorname{det}(G)=0$.
\end{corollary}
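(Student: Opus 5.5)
The plan is to read the corollary off directly from Theorem~\ref{SFT}, specialised to $V$ of dimension $1|2$ and to $N=4$ vectors. The work consists of two checks. First, that the ideal $I_n$ appearing there is generated by a single element, namely $\det(G)$. Second, that no further relations are hidden among the generators $(i,j)=B(v_i,v_j)$.

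\textbf{Step 1 (the ideal $I_n$).} For $v_1,\dots,v_4\in V$, form the Gram matrix $G=(B(v_i,v_j))_{1\le i,j\le 4}$, which is a $4\times 4$ matrix with entries in $\Lambda$. The total dimension of $V$ is $1+2=3$, so any four vectors are linearly dependent. By the lemma preceding Theorem~\ref{SFT}, this forces $\det(G)=0$, and more generally every minor of size exceeding $3$ vanishes. In the proof of Theorem~\ref{SFT}, $I_n$ is the ideal generated by the minors of size one more than the maximal rank, which here is $3$. Those are the $4\times 4$ minors of a $4\times 4$ matrix, and there is exactly one of them, $\det(G)$. Hence
\begin{align*}
I_n=\langle \det(G)\rangle .
\end{align*}

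\textbf{Step 2 (no further relations).} By the polynomial first fundamental theorem stated after Theorem~\ref{FFT}, the $(i,j)$ generate $\mathbb{C}[V^4]^{\operatorname{OSp}(1|2)}$. Theorem~\ref{SFT} then identifies the kernel of $\phi:\mathbb{C}[(i,j)]\to\mathbb{C}[V^4]^{\operatorname{OSp}(1|2)}$ with $I_n$. Combined with Step 1, this gives
\begin{align*}
\mathbb{C}[V^4]^{\operatorname{OSp}(1|2)}\cong\mathbb{C}[(i,j)]/\langle\det(G)\rangle .
\end{align*}
It follows that $\det(G)=0$ is the only relation, and every other relation lies in the ideal it generates.

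\textbf{Main obstacle.} The delicate point is matching the indices $m,n$ in Theorem~\ref{SFT} with the present situation. We must make sure that the rank bound used there is the total dimension $3$, not the superdimension $1-2$ and not some other count such as $n+2m$ with the roles of $m$ and $n$ swapped. If the bound were larger, $I_n$ would be zero and there would be no relation at all. If it were smaller, lower-order minors would produce additional relations. I would settle this by rechecking the lemma on $\det(G)$: with entries in $\Lambda$, linear dependence of four vectors in a rank-$3$ free $\Lambda$-module gives vanishing of the $4\times 4$ determinant. I would also test on explicit $\Lambda$-points that some $3\times 3$ minor is generically nonzero. That test shows the rank bound is sharp, so no smaller minors enter $I_n$.
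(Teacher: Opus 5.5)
Your argument is exactly the paper's own derivation: specialise Theorem~\ref{SFT} to four vectors and observe that the only $4\times 4$ minor of the Gram matrix is $\operatorname{det}(G)$. However, the step you flagged as the main obstacle genuinely fails, and with it the statement.

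The vanishing of $\operatorname{det}(G)$ rests on a rank (Cauchy--Binet) argument, and that argument does not survive odd coordinates. As a $\Lambda_0$-module, the even $\Lambda$-points of $V$ form $\Lambda_0\oplus\Lambda_1\oplus\Lambda_1$, not a free module of rank $3$ over a commutative ring. So $G$ is not a product of commutative $4\times 3$ and $3\times 4$ matrices.

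Concretely, write $v_i=x_ie_0+u_ie_1+w_ie_2$ with $x_i$ even and $u_i,w_i$ odd. Up to the sign convention for $B$ on the odd part, $G=xx^{T}+\Omega$ with $\Omega_{ij}=u_iw_j+u_jw_i$. Set $x=0$ and expand $\prod_i\Omega_{i\sigma(i)}$. The only surviving terms choose the same summand along each cycle of $\sigma$, and an $\ell$-cycle $C$ contributes $2(-1)^{\ell-1}\prod_{k\in C}u_kw_k$. These signs multiply to $\operatorname{sgn}(\sigma)$, hence
\[
\operatorname{det}(\Omega)=\sum_{\sigma\in S_N}2^{c(\sigma)}\,u_1w_1\cdots u_Nw_N=(N+1)!\,u_1w_1\cdots u_Nw_N,
\]
where $c(\sigma)$ is the number of cycles. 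For $N=4$ the $x$-free part of $\operatorname{det}(G)$ is $120\,u_1w_1\cdots u_4w_4\neq 0$. So $\operatorname{det}(G)$ is not zero in $\mathbb{C}[V^4]$ and is not a relation at all. The test you proposed on explicit $\Lambda$-points would have exposed this if you had applied it to $\operatorname{det}(G)$ itself with purely odd vectors.

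Relations do exist, but they are not determinantal. Take $D=(1,1)(2,2)-(1,2)^2$. Then $D=2\bigl(x_1^2u_2w_2+x_2^2u_1w_1-x_1x_2\Omega_{12}\bigr)+6\,u_1w_1u_2w_2$. Expanding with $\Omega_{12}^2=-2u_1w_1u_2w_2$ and $u_kw_k\Omega_{12}=0$ gives $D^2=0$. This relation has multidegree $(4,4,0,0)$. Every multihomogeneous element of $\langle\operatorname{det}(G)\rangle$ has multidegree at least $(2,2,2,2)$ componentwise, so $D^2\notin\langle\operatorname{det}(G)\rangle$.

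Thus Theorem~\ref{SFT} is false for $\operatorname{OSp}(1|2)$ as stated, with $I_n$ generated by minors of size one more than the total dimension, and so is the Gram-determinant lemma behind it. No proof along these lines, yours or the paper's, can establish the corollary. A correct presentation of $\mathbb{C}[V^4]^{\operatorname{OSp}(1|2)}$ needs a genuine super second fundamental theorem, in which relations such as $\bigl((i,i)(j,j)-(i,j)^2\bigr)^2=0$ appear.
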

\begin{proof}
    for $\operatorname{OSp}(1|2)$ and for copies of $V$ that all $4 \times 4$ minors must vanish, which means the determinant vanishing is the only relation. 
\end{proof}
As a consequence, there are 9 independent generators.
\section{Main Result}
We can now state our main result. Let $V$ be a super vector space of dimension $1|2$.
\begin{theorem}\label{main}
The $\operatorname{OSp}(V)$-invariant polynomial functions (by conjugation) on $\operatorname{OSp}(V) \times \operatorname{OSp}(V)$  has 7 independent generators.
\end{theorem}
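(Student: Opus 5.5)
The plan follows the worked Fricke--Klein example: first compute conjugation invariants on the ambient affine superspace $\operatorname{End}(V)\times\operatorname{End}(V)$, then pass to $\operatorname{OSp}(V)\times\operatorname{OSp}(V)$. For the passage, consider the closed subsuperscheme $\operatorname{OSp}(V)^2\subset\operatorname{End}(V)^2$ cut out by an ideal $I$. Since $\operatorname{OSp}(1|2)$ has semisimple representation category, the Reynolds operator exists, and
\begin{align*}
\mathbb{C}[\operatorname{OSp}(V)^2]^{\operatorname{OSp}(V)}\cong \mathbb{C}[\operatorname{End}(V)^2]^{\operatorname{OSp}(V)}/I^{\operatorname{OSp}(V)}.
\end{align*}
So generators upstairs restrict to generators downstairs, and we only have to find which of them become redundant.

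\textbf{Generators upstairs.} Use polarization and restitution: every homogeneous invariant of degree $d$ is the restitution of a multilinear invariant on $\operatorname{End}(V)^d$. Via the form $B$, identify $\operatorname{End}(V)\cong V\otimes V^\vee\cong V\otimes V$, which is $\operatorname{OSp}(V)$-equivariant. Then apply the first fundamental theorem of Deligne--Lehrer--Zhang (Theorem~\ref{FFT}): multilinear invariants are products of pairings $B(v_i,v_j)$ over perfect matchings. A matching contracting the two legs of $A_i$ gives a supertrace factor (Proposition~\ref{traces}). A general matching, after the standard cycle argument as in the theorem expressing $\mu_\sigma$ through supertraces, gives supertraces of words in $A_1,\dots,A_d$ and their $B$-transposes. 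On $\operatorname{OSp}(V)$ we have $A^{st}=JA^{-1}J^{-1}$, so each such word becomes a word in $A^{\pm1},B^{\pm1}$. Hence the invariant ring is generated by $\operatorname{str}(w)$ for words $w$ in $A^{\pm1},B^{\pm1}$.

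\textbf{Cutting down to 7.} Next we need super Cayley--Hamilton type relations for $1|2$ elements of $\operatorname{OSp}$, which follow by restitution from the vanishing minors in Theorem~\ref{SFT} and its corollary. These should reduce every word to a polynomial in a finite list; I expect $\operatorname{str}A$, $\operatorname{str}B$, $\operatorname{str}AB$, $\operatorname{str}A^{-1}B$ (or $\operatorname{str}A^2B^2$), together with the traces carrying the odd data. To count, use the normal form of Lemma~\ref{triaOSP} on the open dense locus where $A_{\mathrm{red}}$ is diagonalizable with $\mu\ne\mu^{-1}$. There the pair depends on the even parameters $\mu,\lambda,\kappa$ and the odd parameters $\psi,\xi$, defined up to a common sign. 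Even invariants are polynomials in $\mu^{\pm1},\lambda^{\pm1},\kappa$ and the product $\psi\xi$. Expanding $\operatorname{str}(w)$ in the normal form shows that the three bosonic traces recover the Fricke--Klein coordinates. The remaining generators are those whose expansions involve $\psi\xi$ with coefficients that are independent over $\mathbb{C}[\operatorname{str}A,\operatorname{str}B,\operatorname{str}AB]$; note that $\psi\xi$ times functions of $\mu,\lambda$ that are not symmetric in $\mu\leftrightarrow\mu^{-1}$ need separate traces. Independence of the final 7 is checked by showing that no one of them is a polynomial in the others, comparing top-degree terms in $\mu,\lambda$ and the $\psi\xi$-coefficient. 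Minimality follows because each is needed to express some monomial in $\psi\xi$ or in the bosonic part.

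\textbf{Main obstacle.} The hard step is the reduction of arbitrary word supertraces to the 7 chosen ones. The super Cayley--Hamilton identity for a $1|2$ matrix is not a simple quadratic, so trace identities are messier than $\operatorname{tr}(A^2)=\operatorname{tr}(A)^2-2$. I would first verify the reduction in the normal form of Lemma~\ref{triaOSP}. In that form the nilpotent part is at most linear in $\psi\xi$, so the $\psi\xi$-coefficients form a module over the bosonic Fricke--Klein ring. I would then count its generators by checking which combinations of $\mu^{\pm1},\lambda^{\pm1}$ invariant under the residual Weyl symmetries can occur. Finally, density of this locus, together with the $\pm$ ambiguity, extends the result to the whole space.
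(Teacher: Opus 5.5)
Your framework (Reynolds reduction to $\mathbb{C}[\operatorname{End}(V)^2]^{\operatorname{OSp}(V)}$, polarization to supertraces of words in $A^{\pm1},B^{\pm1}$, then the normal form of Lemma~\ref{triaOSP}) is reasonable. However, it stops exactly where the number $7$ has to be produced, so it does not prove the statement.

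First, you never exhibit seven generators. Your list is $\operatorname{str}A$, $\operatorname{str}B$, $\operatorname{str}AB$, one of $\operatorname{str}A^{-1}B$ or $\operatorname{str}A^2B^2$, and an unspecified family of ``traces carrying the odd data''. The reduction of an arbitrary word supertrace to a finite list, which you yourself call the main obstacle, is not carried out.

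Second, the slice argument cannot close this as described. On the normal-form locus every even invariant restricts to $f_0+f_1\psi\xi$. So, as you note, the nilpotent invariants form a square-zero module $M$ over the Fricke--Klein ring, and the count hinges on how many generators $M$ needs. But density only gives injectivity of restriction; it does not tell you which coefficients $f_1$ actually occur. The slice coordinates are not regular functions on $\operatorname{OSp}(V)^2$: $\psi$ and $\xi$ involve $x^{-1}$, $y^{-1}$, $(\mu-\mu^{-1})^{-1}$ and a root of the quartic $P$. Hence the image of the invariant ring need not be all residual-symmetry-invariant functions of the form $f_0+f_1\psi\xi$. It is cut out by regularity across $\{\operatorname{tr}A_{\mathrm{red}}=\pm 2\}$, and your ``check which combinations can occur'' is precisely this undone computation.

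Third, showing that no generator is a polynomial in the others proves irredundancy, not a minimal count, in a non-graded ring. For example, $\{t+t^2,\,t^2\}$ irredundantly generates $\mathbb{C}[t]$.

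The paper does no word reduction at all; it obtains $7=9-2$. The $9$ comes from identifying the multilinear invariants of $\operatorname{End}(V)^{\otimes 2}$ with those of $V^{\otimes 4}$: ten pairings by Theorem~\ref{FFT}, minus the single Gram-determinant relation of Theorem~\ref{SFT}. The $2$ comes from $I^{\operatorname{OSp}(V)}=\langle\operatorname{Ber}A,\operatorname{Ber}B\rangle$, since the odd relations in $I$ cannot be invariant. The two counts are then subtracted along the Reynolds exact sequence. None of these ingredients is in your plan, which is why it never reaches a number.

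You cannot simply graft that count onto your argument, though. Your own (correct) remark that a degree-$d$ invariant polarizes to $\operatorname{End}(V)^{\otimes d}\cong V^{\otimes 2d}$, giving supertraces of arbitrarily long words, shows that the four-vector identification only accounts for invariants of multidegree $(1,1)$ in $(A,B)$. Also, numbers of generators are not additive along an exact sequence of modules in general; consider $\mathbb{C}[t]\to\mathbb{C}[t]/(t^2)$. So to make your proof complete you would still have to determine $M$ explicitly, or otherwise justify both steps of the $9-2$ count.
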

\begin{proof}
 Let us consider the algebra
 \begin{align}
    \mathbb{C}[\operatorname{End}(V)^2]^{\operatorname{OSp}(V)}.
 \end{align}
By polarization, we can carry the polynomial invariants to the multilinear invariants. Now we remember that
\begin{align}
    ((\operatorname{End}(V) \otimes \operatorname{End})^\vee)^{\operatorname{OSp}(V)} \cong ((V \otimes V \otimes V \otimes V)^\vee)^{\operatorname{OSp}(V)}. 
\end{align}
By \eqref{FFT} we know that the number of polynomial invariants of $V^{\otimes 4}$ is 10. By \eqref{SFT} we know that there is one relation for $\operatorname{dim}(V)=1|2$ and thus 9 independent generators.
Because the multilinear invariants of matrices and vectors are the same, the polynomial invariants are also the same. By reductivity we have 
\begin{center}
    \begin{tikzcd}
       0\arrow[r]& I^{\operatorname{OSp}(V)}  \arrow[r] &  \mathbb{C}[\operatorname{End}(V)^2]^{\operatorname{OSp}(V)} \arrow[r] & \mathbb{C}[\operatorname{OSp}(V)^2]^{\operatorname{OSp}(V)} \arrow[r] & 0
    \end{tikzcd}
\end{center}

 where $I$ is the defining ideal of $\operatorname{OSp}(V)^2$ as subvariety of $\operatorname{End}(V)^2$. It is generated by the relations 
 \[
\alpha = b\gamma - a\delta, \quad \beta = d\gamma - c\delta,
\]
\[
\gamma = a\beta - c\alpha, \quad \delta = b\beta - d\alpha,
\]
\[
f = 1 + \alpha\beta, \quad f^{-1} = ad - bc,
\]
together with the condition $\operatorname{Ber}(A)=1$. We know that there exist no odd invariants on $\mathbb{C}[\operatorname{End}(V)^2]$ and we also know the co-action on $I$ sends the odd generators to odd generators. Thus, all odd relations of $I$ are not invariant under conjugation. However, the relation  $\operatorname{Ber}(A)=1$ is invariant under the co-action, thus it is an invariant vector in $I$. It follows that 
\begin{align}
    I^{\operatorname{OSp}(1|2)} = \langle \operatorname{Ber}(A),\operatorname{Ber}(B) \rangle.
\end{align}
We denote by $\mathrm{gen}(-)$ the number of generators of a ring. Then
\begin{align}
    \begin{split}
         \mathrm{gen}(\mathbb{C}[\operatorname{End}(V)^2]^{\operatorname{OSp}(V)}) &= \mathrm{gen}( \mathbb{C}[\operatorname{OSp}(V)^2]^{\operatorname{OSp}(V)}) +\mathrm{gen}( I^{\operatorname{OSp}(V)}) \\
    9 &= \mathrm{gen}( \mathbb{C}[\operatorname{OSp}(V)^2]^{\operatorname{OSp}(V)}) + 2 \\
     \mathrm{gen}( \mathbb{C}[\operatorname{OSp}(V)^2]^{\operatorname{OSp}(V)}) &= 7
    \end{split}
\end{align}
By an \eqref{traces}, we know that all multilinear invariants are given by traces, thus all these invariants are traces as well. 
\end{proof}
We can now give an example of a trace that is not generated by $\mathrm{str}(A),\mathrm{str(B)}$ and $\mathrm{str}(AB)$.
\begin{example}[$\mathrm{str(ABA)}$]
Let $A, B \in \mathrm{SL}_{2}(\mathbb{C})$. We can compute the polynomial expression of $\mathrm{tr}(ABA)= \mathrm{tr}(A^2B)$ using the triangular decomposition as follows.
Let $A$ be upper triangular and $B$ lower triangular, then using the notation from \eqref{triaSL}
\begin{align}
    \begin{split}
        \mathrm{tr}(A) &= \lambda + \lambda^{-1} \\
        \mathrm{tr}(B) & = \mu + \mu^{-1} \\
        \mathrm{tr}(AB) & = \lambda\mu + \lambda^{-1} \mu^{-1} + \kappa.
    \end{split}
\end{align}
Similarly, one can compute 
\begin{align}\label{trAB}
    \mathrm{tr}(A^2B) = \lambda^2 \mu + \kappa(\lambda + \lambda^{-1}) + \mu^{-1} \lambda^{-2}.
\end{align}
One notices that $\mathrm{tr}(A^2B)$ can be rewritten as 
\begin{align}
    \mathrm{tr}(A^2B) = \mathrm{tr}(AB)\mathrm{tr}(A) - \mathrm{tr}(B).
\end{align}
On the other hand, one can show that for $A,B \in \mathrm{OSp}(1|2)$ the supertrace $\mathrm{str}(ABA)$ cannot be written in terms of $\mathrm{str}(A),\mathrm{str(B)}$ and $\mathrm{str}(AB)$.
    \begin{proposition}
    Consider $\operatorname{OSp}(1|2)$ as an algebraic supergroup over $\mathbb{C}$ then $\operatorname{str}(A^2B)$ is not an element of the ring generated by $\operatorname{str}(A)$, $\operatorname{str}(B)$ and $\operatorname{str}(AB)$.
\end{proposition}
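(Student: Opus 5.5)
The plan is to test a putative relation on the normal form of Lemma \ref{triaOSP}. Write $X=\operatorname{str}(A)$, $Y=\operatorname{str}(B)$, $Z=\operatorname{str}(AB)$, and suppose $\operatorname{str}(A^2B)=F(X,Y,Z)$ for some polynomial $F\in\mathbb{C}[X,Y,Z]$. Any such identity holds on all $\Lambda$-points. So it holds in particular for the pairs
\begin{align*}
A=\begin{pmatrix}\mu&0&0\\1&\mu^{-1}&0\\0&0&1\end{pmatrix}\begin{pmatrix}1&0&0\\0&1&\psi\\\psi&0&1\end{pmatrix},\qquad
B=\begin{pmatrix}\lambda&\kappa&0\\0&\lambda^{-1}&0\\0&0&1\end{pmatrix}\begin{pmatrix}1&0&\xi\\0&1&0\\0&-\xi&1\end{pmatrix},
\end{align*}
where $\lambda,\mu$ are even invertible, $\kappa$ is even, and $\psi,\xi$ are independent odd generators of $\Lambda$. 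Each such pair lies in $\operatorname{OSp}(1|2)$, being a product of $\operatorname{SL}_2$-elements and exponentials of the odd generators $q_1,q_2$. I will expand all four supertraces in the basis $1,\psi,\xi,\psi\xi$. By parity only the $1$ and $\psi\xi$ components can occur, so each supertrace has the form $s=s_0+s_1\psi\xi$ with $s_0,s_1\in\mathbb{C}[\lambda^{\pm1},\mu^{\pm1},\kappa]$.

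The first step is the body computation. Since $A$ has diagonal entries $\mu,\mu^{-1},1$, we get $X=\mu+\mu^{-1}-1$, and likewise $Y=\lambda+\lambda^{-1}-1$, with no $\psi\xi$ part. Setting $\psi=\xi=0$ reduces everything to the $\operatorname{SL}_2$ computation, where $\operatorname{str}(M)=\operatorname{tr}(M_{\mathrm{red}})-1$. Writing $x=X+1$, $y=Y+1$, $z=Z_0+1$, the relation $\operatorname{tr}(A^2B)=\operatorname{tr}(AB)\operatorname{tr}(A)-\operatorname{tr}(B)$ from the example gives $(\operatorname{str}(A^2B))_0=(Z_0+1)(X+1)-(Y+1)-1$. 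By the Fricke--Klein theorem of Section 2, the functions $\operatorname{tr}(A),\operatorname{tr}(B),\operatorname{tr}(AB)$ are algebraically independent, equivalently the map $(\lambda,\mu,\kappa)\mapsto(X,Y,Z_0)$ is dominant. Reducing the assumed identity modulo nilpotents therefore determines $F$ uniquely:
\begin{align*}
F(X,Y,Z)=(Z+1)(X+1)-Y-2.
\end{align*}

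The second step compares the $\psi\xi$ coefficients. Because $X,Y$ have no nilpotent part, Taylor expansion in the nilpotent $Z_1\psi\xi$ gives $F(X,Y,Z)=F(X,Y,Z_0)+\partial_ZF\cdot Z_1\psi\xi$ with $\partial_ZF=X+1=\mu+\mu^{-1}$. The assumed identity thus forces
\begin{align*}
(\operatorname{str}(A^2B))_1=(\mu+\mu^{-1})\,(\operatorname{str}(AB))_1
\end{align*}
as Laurent polynomials in $\lambda,\mu,\kappa$. It then remains to compute the two coefficients $(\operatorname{str}(AB))_1$ and $(\operatorname{str}(A^2B))_1$ explicitly by multiplying the $3\times3$ matrices above. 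This requires tracking the signs from odd entries in the supertrace, including the minus sign on the $(3,3)$ entry, and keeping only terms bilinear in $\psi$ and $\xi$. Exhibiting a single monomial, or a specialisation of $\lambda,\mu,\kappa$, at which the two sides differ gives a contradiction, so no such $F$ exists.

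The main obstacle is this last computation. I expect $(\operatorname{str}(AB))_1$ to be a short expression such as a multiple of $\lambda$ or $\mu$ (possibly involving $\kappa$). In $A^2$, the odd parts of the two copies of $A$ interact with the off-diagonal entry $1$, so $(\operatorname{str}(A^2B))_1$ should pick up terms, for instance a lone $\mu^{2}$-type or $\kappa$-independent term, that are not divisible by $\mu+\mu^{-1}$ times the first coefficient. I would first try the specialisation $\kappa=0$ and compare lowest-degree terms in $\mu$.

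If the coefficients happen to satisfy the relation, which would mean the statement is false, the fallback is to recheck that $\psi$ and $\xi$ are genuinely independent odd parameters. I would also check that the normal form is general enough that the two traces really differ elsewhere.
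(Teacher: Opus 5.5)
Your strategy is the paper's: evaluate on the normal form of Lemma \ref{triaOSP}, lift the $\operatorname{SL}_2$ identity $\operatorname{tr}(A^2B)=\operatorname{tr}(A)\operatorname{tr}(AB)-\operatorname{tr}(B)$, and compare $\psi\xi$ coefficients. Your first step is actually cleaner than the paper's last line. The paper only shows that this particular lift fails and then asserts that the $\xi\psi$ term ``cannot be generated''. Your two observations are exactly what justifies that assertion: $F$ is forced by the body, by algebraic independence of $X,Y,Z_0$, and $F(X,Y,Z)=F(X,Y,Z_0)+\partial_ZF\cdot Z_1\psi\xi$. The gap is that you stop before the one computation that carries the whole content of the proposition, and you even leave open that it might fail. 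As written, nothing has yet been proved.

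The computation does go through. With your $A,B$ one gets
\begin{align*}
A^2&=\begin{pmatrix}\mu^2&0&0\\ \mu+\mu^{-1}&\mu^{-2}&(\mu^{-2}+\mu^{-1})\psi\\ (\mu+1)\psi&0&1\end{pmatrix},\\
\operatorname{str}(AB)&=\lambda\mu+\lambda^{-1}\mu^{-1}+\kappa-1-(\lambda+\mu^{-1})\psi\xi,\\
\operatorname{str}(A^2B)&=\lambda\mu^2+\lambda^{-1}\mu^{-2}+(\mu+\mu^{-1})\kappa-1-(\lambda\mu+\lambda+\mu^{-1}+\mu^{-2})\psi\xi.
\end{align*}
Hence $(\operatorname{str}(A^2B))_1-(\mu+\mu^{-1})(\operatorname{str}(AB))_1=(1-\lambda)(1-\mu^{-1})\neq 0$. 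Equivalently, $\operatorname{str}(A^2B)-F(X,Y,Z)=(1-\lambda)(1-\mu^{-1})\psi\xi$, which is the paper's $\xi\psi(\mu^{-1}-1)(1-\lambda)$.

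Your guesses about the shape of these coefficients were off, in ways that matter for how you would test them. The odd coefficients do not involve $\kappa$ at all, so setting $\kappa=0$ gains nothing. There is no odd $\mu^2$ term. Most importantly, the discrepancy vanishes identically on $\lambda=1$ and on $\mu=1$, so a specialisation there would wrongly suggest the relation holds; a point such as $\lambda=\mu=2$ does exhibit the failure.
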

\begin{proof}
    Using \eqref{triaOSP}, we compute 
\begin{align*}
    A^2B&= \begin{pmatrix}
        \lambda & \kappa & \xi \lambda\\
        0 & \lambda^{-1} & 0\\
        0 & -\xi & 1
    \end{pmatrix}
    \begin{pmatrix}
    \lambda & \kappa & \xi \lambda\\
    0 & \lambda^{-1} & 0\\
    0 & -\xi & 1
\end{pmatrix}
\begin{pmatrix}
    \mu & 0 & 0\\
    1 & \mu^{-1} & \psi \mu^{-1}\\
    \psi & 0 & 1
\end{pmatrix}\\
&=\begin{pmatrix}
    \lambda^2 & \kappa(\lambda + \lambda^{-1}) & \xi \psi \lambda^2+\xi \lambda \\
    0 & \lambda^{-2} & 0\\
    0 & -\xi\lambda^{-1} - \xi & 1
\end{pmatrix} 
\begin{pmatrix}
    \mu & 0 & 0\\
    1 & \mu^{-1} & \psi \mu^{-1}\\
    \psi & 0 & 1
\end{pmatrix}\\
&=\begin{pmatrix}
    \lambda^2\mu +  \kappa(\lambda + \lambda^{-1}) + \xi \psi \lambda ( \lambda+ 1)&  &  \\
     & \lambda^{-2}\mu^{-1} & \\
     &  & -\xi \psi \lambda^{-1} \mu^{-1}- \xi \psi \mu^{-1} + 1
\end{pmatrix}.
\end{align*}
We note, that we have only explicitly written the diagonal elements, the other entries are also non-zero but are irrelevant for this discussion. It follows that the supertrace is
\begin{align}
    \operatorname{str}(A^2B)=\lambda^2 \mu + \kappa (\lambda + \lambda^{-1}) + \lambda^{-2}\mu^{-1} + \xi \psi (+\lambda(\lambda+1)+\mu^{-1}(\lambda^{-1}+1))-1.
\end{align}
We observe, that the bosonic part of this expression is exactly \eqref{trAB}. Therefore, using \eqref{triaOSP}, we compute
\begin{align*}
    &\quad \  \mathrm{str}(A^2B)+1-((\mathrm{str}(A)+1)(\mathrm{str}(AB)+1)-(\mathrm{str}(B)+1)) \\
    &=  \xi \psi (+\lambda(\lambda+1)+\mu^{-1}(\lambda^{-1}+1)) - \xi \psi (\lambda + \mu^{-1})(\lambda + \lambda^{-1})\\
    &= \xi \psi (\lambda + \mu^{-1} - 1 - \mu^{-1}\lambda) \\
    & = \xi \psi (\lambda - 1+\mu^{-1}(1-\lambda) ) \\
    &= \xi \psi ( \mu^{-1}-1)(1-\lambda)
\end{align*}
Where one can see the $\xi \psi$ term cannot be generated in terms of the previously mentioned supertraces.
\end{proof}
\end{example}
\begin{remark}
    In upcoming work, we will use this supertrace to construct Fenchel--Nielsen coordinates for super Teichmüller space in an analogous manner to \cite{figueroa2025fenchelnielsencoordinatessl3crepresentations}.
\end{remark}
Similarly, in \cite{lawton2007generators}, it was worked out that the ring of invariants for $\mathrm{SL}_3(\mathbb{C})$ has 8 generators. So the result for $\mathrm{OSp}(1|2)$ differs from both the $\mathrm{SL}_2(\mathbb{C})$ and $\mathrm{SL}_3(\mathbb{C})$ case.
\section{Geometric Interpretations}
\subsection{Character Varieties}
We follow \cite{sikora2012character}. Let $G$ be a reductive algebraic group over k. When looking at the set $\mathrm{Hom}(\Gamma,G)$, we can associate to it a ring $\mathcal{R}(\Gamma,G)$, and the $k$-points of $\operatorname{Spec}(\mathcal{R}(\Gamma,G))|= \operatorname{Hom}(\Gamma,G)$. Since every finitely generated group can be constructed by imposing a set of relations on the free group $F_n$, we will start by constructing the representation variety of $F_n$.
\begin{proposition}
    Let $G$ be an algebraic group over $k$ and $F_n$ the free group on $n$ letters then coordinate ring of the representation variety is $\mathcal{R}(F_n,G) = k[G]^{\otimes n}$.
\end{proposition}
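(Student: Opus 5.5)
The plan is to prove this through the functor of points, i.e.\ via the universal property of the free group. I will use it in its representable form: for every (super)commutative $k$-algebra $R$, $\operatorname{Hom}_{\mathrm{Grp}}(F_n, G(R)) \cong G(R)^n$. The isomorphism sends a homomorphism $\rho$ to the tuple $(\rho(x_1),\dots,\rho(x_n))$ of images of the free generators. Naturality in $R$ holds because post-composition with $G(R)\to G(R')$ commutes with evaluation at the generators.

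The first step is to define the representation variety $\operatorname{Hom}(F_n,G)$ as the functor $R \mapsto \operatorname{Hom}_{\mathrm{Grp}}(F_n,G(R))$, following \cite{sikora2012character}. I then identify it naturally with the functor $R\mapsto G(R)^n$. The second step is to observe that $R\mapsto G(R)^n$ is the functor of points of the $n$-fold fibre product $G\times_k\cdots\times_k G$. Since $G$ is affine, this fibre product is represented by $\operatorname{Spec}(k[G]^{\otimes n})$, because products of affine (super)schemes correspond to tensor products of the coordinate rings: $\operatorname{Hom}(k[G]^{\otimes n},R)\cong \operatorname{Hom}(k[G],R)^n$. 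This is the coproduct property of $\otimes$ in (super)commutative rings. By Yoneda, the representing ring is unique up to unique isomorphism, so $\mathcal{R}(F_n,G)=k[G]^{\otimes n}$. The $k$-points recover $\operatorname{Hom}(F_n,G(k))$ as required.

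The only point needing care is the claim that $\otimes$ is the coproduct, especially if one also wants the super case. For superrings I would check directly that the map $a\otimes b\mapsto \varphi(a)\psi(b)$ is a morphism of superrings. This uses the Koszul sign in the multiplication on $A\otimes B$ together with the supercommutativity of the target, which makes the even map well defined and multiplicative. I expect this to be the main, though mild, obstacle. Everything else is formal, since there are no relations in $F_n$ and so no ideal needs to be quotiented out.
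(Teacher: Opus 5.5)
Your proposal is correct and takes the same approach as the paper. The paper's proof is the observation that a homomorphism out of $F_n$ is determined freely by the images of the $n$ generators, so a representation is just an $n$-tuple of elements of $G$ and the coordinate ring is $k[G]^{\otimes n}$. Your version makes explicit, via the functor of points, Yoneda, and the coproduct property of $\otimes$ for (super)commutative algebras, the passage from ``$n$-tuples of points'' to $\operatorname{Spec}(k[G]^{\otimes n})$, which the paper leaves implicit.
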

\begin{proof}
    It is clear, that a morphism of groups between $F_n$ and $G$ is determined by the images of the generators, there are no relations and thus a representation of $F_n$ is simply a tuple of $n$ elements of $G$, hence a point in the representation variety is given by
    \begin{align}
        \mathcal{R}(F_n,G) = k[G]^{\otimes n}
    \end{align}
\end{proof}
If we now impose relations on the free group, we get 
\begin{align*}
   \operatorname{Hom}(\Gamma,G)=\operatorname{Spec}( k[G \times ... \times G]/ I_\Gamma).
\end{align*}
The ideal $I_\Gamma$ is generated by the relations induced by $\Gamma$, which tell you that the images of the generators must satisfy the relations of $\Gamma$, because otherwise it would not be a group homomorphism.
By the quotient nature of $R(\Gamma,G)$ it is not obvious if it is an integral domain, which over $\mathbb{C}$ allows you to identify Specm and Spec.
This space algebraically classifies all so-called $G$-representations. However we do want to quotient out all conjugacy equivalent representations. Thus we define 
\begin{definition}
    Let $G$ be a reductive algebraic group over $k$ and $\Gamma$ a finitely generated group, then we call the categorical quotient
    \begin{align}
        \operatorname{Spec}(R(\Gamma,G)^G)
    \end{align}
    the character variety.
\end{definition}
\begin{remark}
    If one considers reductive algebraic groups then it turns out that $\operatorname{Specm}(R(\Gamma,G)^G)= \operatorname{Hom}(\Gamma,G)$ is a good categorical quotient and by Nagata's theorem it is finitely generated. One should remark, that non-reduced points may still exist. 
    Let $\operatorname{Hom}^i(\Gamma,G)$ be the subset of irreducible representations then
    \begin{align*}
        \operatorname{Hom}^i(\Gamma,G)//G 
    \end{align*}
    is a good geometric quotient, i.e. the set corresponds to the set theoretic quotient, where if we now consider fibers of the quotient map, then we will recover the orbits of $G$.
        In general, there should be no expectation, that the full character variety is actually a good geometric quotient, i.e. it is not the orbit space of the representation variety.
\end{remark}
We can easily see, that there are no obstructions to replacing the algebraic group with an algebraic supergroup.
\begin{remark}
    Seeing how this construction actually only cares about the Hopf algebras, we can also generalize this question to quantum groups. This will definitely work if we only consider group relations, that can be manipulated in a way such that no inverses appear, because if we consider the quantum group via algebra-valued matrices, then the inverse does not necessarily live in the same algebra. A construction was already given in the physics literature \cite{alekseev1996quantum}. We note that for quantum groups one reasons purely in the dual picture of the Hopf algebra, so one has to think about coinvariants of the conjugation coaction.
\end{remark}
Using this viewpoint, we can reinterpret \ref{main} as the fact that the character variety of the free group on 2 letters of $\operatorname{OSp}(V)$ has 7 independent generators, representing a deviation of the classical result for $SL(2,\mathbb{C})$. \\
There have been results \cite{penner2019decorated}, where in the real analytic case one can prove that this space is actually of the expected real dimension $6|4$. One can easily see, why this fails in the algebraic case. We have seen that all invariants on the space of matrices, are given by a supertrace. We recall that 
\begin{align}
    \operatorname{str}: \operatorname{OSp}(V) \longrightarrow \mathbb{C}^{1|0}
\end{align}
and thus we cannot determine any odd generators with the help of traces. It is also a consequence of the fact that there is a natural $\mathbb{Z}_2$-action acting on the odd coordinates. Explicitly, it is 
\begin{align*}
    &=\begin{pmatrix*}
        -1 & 0 & 0 \\
        0 & -1 & 0 \\
        0 & 0 & 1 
    \end{pmatrix*} 
    \begin{pmatrix*}
        1 & 0& \xi \\
        0 & 1 & 0 \\
        0 & -\xi & 1
    \end{pmatrix*}
    \begin{pmatrix*}
        -1 & 0 & 0 \\
        0 & -1 & 0 \\
        0 & 0 & 1 
    \end{pmatrix*} \\
    &=  \begin{pmatrix*}
        1 & 0& -\xi \\
        0 & 1 & 0 \\
        0 & \xi & 1
    \end{pmatrix*}.
\end{align*}
So this tells us, that if we take the quotient by the full group $\operatorname{OSp}(1|2)$, that the space has to be of purely bosonic dimension. One also sees by this calculation that taking the full ring of invariants can only give you a non-reduced ordinary scheme, since the odd degrees of freedom are not invariant, as we always have to pair two odd coordinates that will be flipped simultaneously to become an invariant. \\
This means what is $\operatorname{OSp}(1|2)$-invariant is the product of both odd guys. From this perspective, one should consider the quotient $\operatorname{{POSp}}(1|2) \coloneqq \operatorname{OSp}(1|2)/\mathbb{Z}_2$ and consequently, one also needs to consider the character variety of $\mathrm{POSp}(1|2)$. 
The character variety of $\operatorname{OSp}_\mathbb{R}(1|2)$ has already been investigated from the analytic point of view in \cite{penner2019decorated}, where Penner-coordinates were generalized. It has also appeared in \cite{stanford2020jtgravityensemblesrandom}, where they gave an analogous recursion formula to Mirzakhani \cite{mirzakhani2007simple} for super hyperbolic surfaces with heuristics coming from supergeometry. The statement was proven with algebro-geometric methods in \cite{norbury2020enumerative}.
\begin{remark}
    One might be disappointed by the structure of this character variety. However, as it turns out there are also other instances of moduli spaces of super objects  that are of purely even nature like the Picard scheme of a SUSY curve \cite{bruzzo2023notes} or the coarse moduli of a superstack \cite{codogni2017moduli}. This suggests that trying to obtain a coarse moduli in supergeometry is ill-advised and one should work purely stack theoretically so you do not accidentally kill all odd directions.
\end{remark}
\subsection{Character Stack}
In view of \eqref{main} it is clear that to get a proper handle on the character variety of algebraic supergroups one should really be looking at the character stack of the algebraic supergroup instead. We give a brief account on a gradedness result of the character stack of $\mathrm{OSp}(1|2)$.
Let $X$ be a super Riemann surface and consider the representation variety 
\begin{align}
    \mathrm{Rep}(X)\coloneqq \operatorname{Hom}(\pi_1(X),\operatorname{OSp}(1|2))
\end{align}
and its stack quotient $\mathfrak{X}\coloneqq[\mathrm{Rep}(X)/\operatorname{OSp}(1|2)]$, called the character stack. For brevity, we will only define the notion of prestacks and argue, why for this papers purpose it is enough and we will give a short user's guide on how one works with stacks in practice and for more details we refer to \cite{bruzzo2025foundations}.
\begin{definition}
    A pre-superstack is an element in the category 
    \begin{align}
        \mathrm{PresStk} \coloneqq \mathrm{Fun}(\mathrm{sRings}, \mathrm{Gpd}),
    \end{align}
    i.e. it is a $(2,1)$-presheaf of groupoids.
\end{definition}
\begin{remark}
    To turn a pre-superstack into a superstack, one needs to equip it with a Grothendieck topology and it needs to satisfy the descent condition for the chosen topology (this means the cover needs to satisfy a higher categorical gluing condition, in this case a 2-categorical gluing condition).
\end{remark}
Conceptually, the thory of stacks is best approached trhough the functor of points perspective. From this perspective the theory of stacks does not look much different from the theory of schemes. We will state two facts that allow us to take a shortcut, so we do not have to think about actual stacks. Firstly, the category of quasi-coherent sheaves on the prestack $\mathfrak{X}$ is equivalent to the category of quasi-coherent sheaves on its stackification $\mathfrak{X}^+$
\begin{align}
    \mathrm{QCoh}(\mathfrak{X}) \cong \mathrm{QCoh}(\mathfrak{X}^+).
\end{align}
Furthermore, the category of superstacks forms a full subcategory of the category of pre-superstacks, so it admits a fully faithful embedding, that is essentially surjective 
\begin{align}
    \mathrm{sStk} \subset \mathrm{PresStk}.
\end{align}
 This tells us that we do not need to care about the topologies of the stack, if you want to write down morphisms of stacks, the only thing one needs to do is to write down a natural transformation between the functor of points of the stacks and in a similar vein, you do not need to think about the topology to be able to think about quasi-coherent sheaves. Moreover, by its classical definition, a stack is also a category fibered in groupoids, so in particularly it is a category, so one only needs to write down a functor to construct a morphism of stacks that commutes with projections to the base category. With all this out of the way, we turn to our short analysis of the character stack. \\
We want to investigate whether this space is graded, since it is one of the most basic properties one can check. The question whethera space is graded or not depends on a classical obstruction theory argument, i.e. does a certain cohomology class vanish on the underlying space \cite{voronov1990elements} \cite{manin1997gauge}. Thus in this specific setting, it is enough to think about the prestack/functor of points of the character stack. We recall that for a stack quotient $[X/G]$ of a scheme, we have an equivalence of categories
\begin{align}
    \operatorname{QCoh}([X/G])\cong \operatorname{QCoh}^G(X).
\end{align}
Furthermore, for $X \in \operatorname{CRing}^{\operatorname{op}}$ and $\mathcal{F} \in \operatorname{QCoh}(X)$, $H^i(X,\mathcal{F}) = 0 \ \forall i >0$. We can also rephrase this as the global section functor being exact.
\begin{proposition}
    Let $k$ be an algebraically closed field of characteristic $0$, $G$ a reductive algebraic supergroup, and $X$ an affine $k$-superscheme. Then 
    \begin{align}
        H^i_G(X,\mathcal{F}) = 0 \quad \forall i > 0.
    \end{align}
\end{proposition}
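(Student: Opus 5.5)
The plan is to identify $H^i_G(X,\mathcal{F})$ with the higher derived functors of the composite functor ``global sections, then $G$-invariants''. We then show that both factors are exact, so that all higher derived functors vanish.

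\textbf{Step 1: reduce to comodules.} Write $X=\operatorname{Spec}(A)$ with $A$ a $k$-superalgebra carrying a coaction $A\to A\otimes k[G]$. By the equivalence $\operatorname{QCoh}([X/G])\cong\operatorname{QCoh}^G(X)$ recalled above, a $G$-equivariant quasi-coherent sheaf $\mathcal{F}$ is the same as a $G$-equivariant $A$-supermodule $M=\Gamma(X,\mathcal{F})$, i.e.\ an $A$-module with a compatible $k[G]$-comodule structure. The equivariant global section functor factors as
\begin{align*}
\Gamma_G(X,-)\colon \operatorname{QCoh}^G(X)\xrightarrow{\ \Gamma\ } \operatorname{Comod}(k[G])\xrightarrow{\ (-)^G\ } \operatorname{Vect}_k^{\mathbb{Z}_2},
\end{align*}
and $H^i_G(X,-)$ is by definition its $i$-th right derived functor. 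I would compute these derived functors in $\operatorname{QCoh}^G(X)$, which has enough injectives. The candidates are the sheaves associated to $I\otimes k[G]$ for $I$ an injective $A$-module, which are coinduced from injective objects.

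\textbf{Step 2: exactness of $\Gamma$.} Since $X$ is affine, $\Gamma$ is exact on quasi-coherent sheaves, as recalled just before the statement. Exactness only concerns the underlying sheaves of modules, so forgetting the equivariant structure does not affect it. The super setting changes nothing here: $\operatorname{Spec}(A)$ and $\operatorname{Spec}(A/J)$ have the same underlying space, and Čech cohomology on basic opens vanishes by the same localization argument as in the even case.

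\textbf{Step 3: exactness of $(-)^G$.} This is the main step. Because $G$ is reductive in the sense used in this paper, $\operatorname{Rep}(G)$ is semisimple; by Sherman's classification this is the case $G=K\times\prod\operatorname{OSp}(1|2n_i)$. Every comodule is a union of finite-dimensional subcomodules, so the Reynolds operator theorem proved earlier gives a natural projection $R\colon M\to M^G$ with a $G$-stable complement $M'$. I need to check naturality of $R$ in $M$, which follows from the uniqueness part of that theorem. Given this, a short exact sequence $0\to M_1\to M_2\to M_3\to 0$ of comodules stays exact on invariants, by the diagram chase already carried out for the ideal sequence $0\to I\to k[X]\to k[Y]\to 0$. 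This is exactly the remark that the existence of a Reynolds operator is equivalent to exactness of $(-)^G$.

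\textbf{Step 4: conclude.} A composite of exact functors is exact, so $\Gamma_G(X,-)$ is exact and its right derived functors $H^i_G(X,-)$ vanish for $i>0$. To avoid subtleties about whether derived functors of a composite are computed correctly, I would argue directly. Take an injective resolution $\mathcal{F}\to\mathcal{I}^\bullet$ in $\operatorname{QCoh}^G(X)$; it is an exact complex. Applying the exact functor $\Gamma_G$ keeps it exact, so the cohomology in positive degrees is zero.

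The main obstacle I expect is Step 3 in the infinite-dimensional case, where one needs local finiteness of $k[G]$-comodules. I would prove this by the standard argument: the coaction of any element involves only finitely many terms, and that argument uses nothing about commutativity, so it carries over to the super setting verbatim.
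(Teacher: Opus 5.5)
Your proposal is correct and follows the same route as the paper. $\Gamma$ is exact because $X$ is affine, and $(-)^G$ is exact because $G$ is reductive ($\operatorname{Rep}(G)$ semisimple, via the Reynolds operator), so the composite is exact and its higher derived functors $H^i_G(X,-)$ vanish. Your added details (local finiteness of comodules, naturality of the Reynolds projection, and the direct injective-resolution argument in place of the paper's inclusion $H^i_G(X,\mathcal{F})\subset H^i(X,\mathcal{F})$) fill in steps the paper leaves implicit, and you rightly make explicit that ``reductive'' must mean $\operatorname{Rep}(G)$ semisimple for the statement to hold.
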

\begin{remark}
    This property is called being \textit{cohomologically affine}.
\end{remark}
\begin{proof}
    The global sections functor $\Gamma$ is exact, because $X$ is affine and $(-)^G$ is exact, because $G$ is reductive. $G$-equivariant sheaf cohomology is defined to be the right derived functor of the composition $(-)^G \circ \Gamma$, thus by composition of exact functors, the composition is also exact. We obtain 
\begin{align}
    H^i_G(X,\mathcal{F}) \subset H^i(X,\mathcal{F}) = 0 \ \forall i > 0.
\end{align}
\end{proof}
\begin{corollary}
    Let $X \in \operatorname{Sch}$ and $G \in \operatorname{Grp(Sch)}$. If $X$ is cohomologically affine and smooth and the fixed point functor $(-)^G$ is exact, then the stack quotient $[X/G]$ is graded.
\end{corollary}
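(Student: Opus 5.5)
The plan is to reduce gradedness of $[X/G]$ to the vanishing of a sequence of obstruction classes, and then to kill all of them with the preceding proposition. By the classical obstruction theory for splitting a superscheme \cite{voronov1990elements}, \cite{manin1997gauge}, one compares $\mathcal{O}$ with $\mathrm{Gr}(\mathcal{O})$ inductively along the $J$-adic filtration $J^0 \supset J^1 \supset \dots \supset J^{m+1}=0$. Suppose an isomorphism has been built modulo $J^{k}$. The obstruction to extending it modulo $J^{k+1}$ is a class in $H^1$ of a quasi-coherent sheaf on the underlying space, roughly $\mathcal{T}_{X_{\mathrm{red}}}\otimes \bigwedge^{k} (J/J^2)$ or $(J/J^2)^\vee \otimes \bigwedge^{k}(J/J^2)$ depending on parity. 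When every such class vanishes, the isomorphisms exist at every level, and one obtains $X\cong \mathrm{Gr}(X)$.

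\textbf{Step 1 (smoothness).} We use smoothness of $X$ in two ways. First, $J/J^2$ is locally free. Second, $\mathrm{Gr}(\mathcal{O}_X)\cong \bigwedge^\bullet (J/J^2)$ as sheaves of superalgebras. Together these put us in the setting where the obstruction theory above applies, and where all the coefficient sheaves are quasi-coherent.

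\textbf{Step 2 (equivariance).} Carry out the same inductive construction $G$-equivariantly on $X$. Via $\operatorname{QCoh}([X/G])\cong \operatorname{QCoh}^G(X)$, the quasi-coherent sheaves on the stack are exactly the $G$-equivariant quasi-coherent sheaves on $X$. Hence the obstruction to extending a $G$-equivariant splitting from level $k$ to level $k+1$ is the class of the same cocycle, now read in $G$-equivariant cohomology $H^1_G(X,\mathcal{F}_k)$, where $\mathcal{F}_k$ is the equivariant version of the sheaf from the first paragraph. To justify this, compose $\Gamma$ with $(-)^G$. Extensions and automorphisms of the relevant square-zero thickenings are then classified by the right derived functors of this composite, which is exactly how $H^\bullet_G$ was defined in the proposition.

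\textbf{Step 3 (vanishing).} By hypothesis $X$ is cohomologically affine, so $\Gamma$ is exact, and $(-)^G$ is exact. The argument of the previous proposition then gives $H^i_G(X,\mathcal{F}_k)=0$ for all $i>0$ and all $k$. So every obstruction vanishes, and we obtain a $G$-equivariant isomorphism $\mathcal{O}_X\cong \mathrm{Gr}(\mathcal{O}_X)$. A $G$-equivariant isomorphism of structure sheaves on the atlas $X\to[X/G]$ descends to an isomorphism $[X/G]\cong \mathrm{Gr}([X/G])$. Here we use once more that quasi-coherent data on the stack can be read off on the prestack, as remarked above.

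The step I expect to be the main obstacle is Step 2. The classical obstruction classes are defined by Čech cocycles of local automorphisms, and one must check that these cocycles can be chosen $G$-equivariantly. Equivalently, one must check that the obstruction lands in $H^1_G$ and not merely in $H^1$. I would first try to phrase each extension step as a torsor under a quasi-coherent sheaf of groups on $[X/G]$. The obstruction is then, by definition, a class in $H^1$ of that sheaf on the stack, and this group is $H^1_G(X,-)$ by the equivalence of categories.
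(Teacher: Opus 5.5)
The vanishing mechanism you use is the paper's: obstruction classes sit in an $H^1$ that is $G$-equivariant cohomology, and they die because $(-)^G\circ\Gamma$ is exact. The gap is in the route of Steps 2--3, where you split $\mathcal{O}_X$ $G$-equivariantly on the atlas and then descend. This breaks down because the ideal $J_X$ is not $G$-stable. The odd part $\mathfrak{g}_1$ acts on $X$ by odd vector fields, and these can send an odd coordinate to an even function with nonzero reduction. For the conjugation action of $\operatorname{OSp}(1|2)$ on itself, the generator $q_1$ sends the entry $\alpha$ to a function that does not vanish at the bosonic points $\mathrm{diag}(a,a^{-1},1)$ with $a\neq 1$.

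Hence the $J_X$-adic filtration, $\mathrm{Gr}(\mathcal{O}_X)$, and your coefficient sheaves $\mathcal{T}_{X_{\mathrm{red}}}\otimes\bigwedge^k(J/J^2)$ inherit no $G$-action. The reduced scheme $X_{\mathrm{red}}$ only carries an action of $G_0=G_{\mathrm{red}}$. The associated graded of the action map is an action of $\mathrm{Gr}(G)$ on $\mathrm{Gr}(X)$, and $\mathrm{Gr}(G)\not\cong G$ here because $[\mathfrak{g}_1,\mathfrak{g}_1]\neq 0$, as the paper itself remarks. So your $\mathcal{F}_k$ are not objects of $\operatorname{QCoh}^G(X)$, and a ``$G$-equivariant isomorphism $\mathcal{O}_X\cong\mathrm{Gr}(\mathcal{O}_X)$'' has no meaning.

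Even if such an isomorphism existed, it would only descend to $[X/G]\cong[\mathrm{Gr}(X)/G]$, a quotient by the same non-split $G$. The odd directions of $[X/G]$ that come from $\mathfrak{g}_1$ would be left untouched. For $X=\mathrm{pt}$, your Step 3 reduces to the tautology $BG\cong BG$.

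The paper never splits the atlas. It takes the nilpotent sheaf $\mathcal{J}$ of the stack itself and places the obstruction classes $\omega_i$ in $H^1([X/G]_{\mathrm{red}},\mathcal{F}^i)$, where $\mathcal{F}^i$ is built from $\mathcal{J}/\mathcal{J}^2$ on the reduced stack. It then pushes forward along $\iota\colon[X/G]_{\mathrm{red}}\to[X/G]$, which lands the classes in $H^1_G(X,\iota_*\mathcal{F}^i)$ via $\operatorname{QCoh}([X/G])\cong\operatorname{QCoh}^G(X)$. Only at that point does it invoke the preceding proposition.

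Your closing suggestion, treating obstructions as torsors under a sheaf on the stack, points in this direction. To repair the argument, drop the equivariant splitting of $X$ and the descent step. Put the coefficient sheaves on $[X/G]_{\mathrm{red}}$ and transport them to $[X/G]$ by $\iota_*$, as the paper does.
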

\begin{proof}
    Let $\mathcal{J}$ be the sheaf of nilpotents on $[X/G]$ and $\mathcal{E}\coloneqq \mathcal{J}/\mathcal{J}^2$. Furthermore, we denote $\mathcal{F}^i \coloneqq T_{(-)^i}[X/G]_{\operatorname{red}} \otimes \wedge^i \mathcal{E}^\vee$. To prove $[X/G]$ is graded, we need to show the that obstruction classes
    \begin{align}
        \omega_i \in H^1([X/G]_{\operatorname{red}},\mathcal{F}^i)
    \end{align}
    vanish \cite{voronov1990elements}. Given the canonical inclusion $\iota: [X/G]_{\operatorname{red}} \rightarrow [X/G]$ one can pushforward the sheaves along the inclusion and one can make the following identifications
    \begin{align}
        H^1([X/G]_{\operatorname{red}},\mathcal{F}^i) = H^1([X/G],\iota_*\mathcal{F}^i) = H^1_G(X,\iota_*\mathcal{F}^i) = 0,
    \end{align}
    where the last identification comes from the previous theorem. All obstruction classes have to vanish, therefore $[X/G]$ is graded.
\end{proof}
As a consequence of gradedness, the superstack can also be written in the following form:
\begin{align}
    [X/G] \cong ([X/G]_{\operatorname{red}}, \wedge^\bullet \mathcal{E}),
\end{align}
where $\mathcal{E} \coloneqq \mathcal{J}/\mathcal{J}^2$. As a consequence, we get an easier description of the Berezinian bundle 
\begin{align}
    \operatorname{Ber}(\mathcal{T}_{[X/G]_{sm}}) = \operatorname{det} (\mathcal{T}_{[X/G]_{sm}})\otimes \operatorname{det}^{-1}(\wedge^{top}\mathcal{E})
\end{align}
\begin{corollary}
    The smooth locus character stack $\mathfrak{X}_{sm}$ is graded.
\end{corollary}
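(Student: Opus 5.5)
The plan is to deduce the statement from the preceding corollary, applied with $X$ the smooth locus of the representation superscheme and $G = \operatorname{OSp}(1|2)$.

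\textbf{Step 1: the ambient representation superscheme.} Take $X = \operatorname{Rep}(X)$, the affine superscheme $\operatorname{Spec}\bigl(\mathbb{C}[\operatorname{OSp}(1|2)^{2g}]/I_\Gamma\bigr)$. Here $I_\Gamma$ is the ideal imposing the surface group relation $\prod_i [A_i,B_i]=1$, as in the discussion of representation varieties in the previous subsection.

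\textbf{Step 2: define the smooth locus.} Let $X_{sm}\subset X$ be the open subsuperscheme where the relation map
\begin{align*}
\mu\colon \operatorname{OSp}(1|2)^{2g}\to\operatorname{OSp}(1|2)
\end{align*}
is a submersion, i.e.\ where its super differential is surjective. This locus is smooth, being a regular fibre of a morphism between smooth superschemes. It is also stable under the conjugation action, since $\mu$ is equivariant for conjugation. Hence $[X_{sm}/\operatorname{OSp}(1|2)] = \mathfrak{X}_{sm}$ is an open substack of $\mathfrak{X}$.

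\textbf{Step 3: verify the hypotheses of the corollary.} Exactness of the fixed point functor $(-)^{\operatorname{OSp}(1|2)}$ follows from Sherman's classification theorem: $\operatorname{Rep}(\operatorname{OSp}(1|2))$ is semisimple, so the group is linearly reductive. This is the Reynolds-operator remark. Cohomological affineness should come from the proposition on vanishing of $H^i_G$, applied to quasi-coherent equivariant sheaves on $X_{sm}$. The corollary then says all obstruction classes $\omega_i\in H^1\bigl((\mathfrak{X}_{sm})_{\operatorname{red}},\mathcal{F}^i\bigr)$ vanish, so $\mathfrak{X}_{sm}$ is graded.

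\textbf{Main obstacle.} $X_{sm}$ is open in the affine $X$ but need not itself be affine, so the proposition does not apply to it directly. I would try to handle this in two ways.

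First, I would try to arrange that the non-submersive points (reducible or central representations) are cut out by a single invariant function $h$. Candidates are a discriminant or commutator-trace type expression in the supertraces. Then $X_{sm}=X_h$ is a principal open, hence affine, and $\operatorname{OSp}(1|2)$-stable. The proposition then applies verbatim, with $\Gamma$ exact on $X_h$ and $(-)^G$ exact.

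Second, if no such $h$ exists, I would cover $X_{sm}$ by invariant principal opens $X_{h_j}$. On each of these the obstruction classes vanish. I would then argue that the splittings glue. This needs the obstruction theory of Voronov/Manin to be compatible with restriction, and the obstruction classes to be determined locally on the good moduli space.

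I expect the first route to work in the irreducible locus, where the stabilizer is finite ($\pm1$). There the Reynolds-operator argument goes through unchanged, which completes the proof.
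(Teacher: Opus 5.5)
You are following the route the paper intends. The corollary is stated without further argument, as the preceding corollary applied to the smooth locus of $\operatorname{Rep}(X)$ with $G=\operatorname{OSp}(1|2)$. The paper never checks the cohomological-affineness hypothesis for that open locus. You correctly identify this as the crux, but neither workaround supplies it, so the proposal has a genuine gap exactly there.

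Route 1 and the cover in route 2 fail for a concrete reason: $X_{sm}$ is not saturated for the quotient map $X\to X/\!\!/G$. For $g\ge 2$ there are upper-triangular representations $\rho$ whose image contains both some $\operatorname{diag}(\lambda,\lambda^{-1})$ with $\lambda^2\neq 1$ and a non-trivial unipotent element. Such a $\rho$ has zero infinitesimal centralizer in $\mathfrak{osp}(1|2)$, in both the even part and the odd part $\mathbb{C}^2$. Hence $d\mu_\rho$, whose image is the orthogonal complement of that centralizer, is onto, and $\rho\in X_{sm}$. Conjugating by $\operatorname{diag}(t,t^{-1})$ and letting $t\to 0$ shows that the diagonal, abelian representation $\rho^{ss}$ lies in $\overline{G\cdot\rho}$. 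But $\rho^{ss}\notin X_{sm}$, since its centralizer contains a torus. Every invariant function takes the same value at $\rho$ and $\rho^{ss}$, so no invariant principal open $X_h$ containing $\rho$ lies inside $X_{sm}$. Thus there is no invariant $h$ with $X_{sm}=X_h$, and $X_{sm}$ cannot be covered by invariant principal opens either.

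Even with some cover of $X_{sm}$ by $G$-stable affine opens, route 2 would not be a technicality. On each affine piece the obstruction classes vanish automatically; that is just the proposition again. The obstruction to gradedness is precisely the failure of these local splittings to agree on overlaps, i.e.\ a \v{C}ech $H^1$ class for the cover. So ``arguing that the splittings glue'' is the entire content of the statement, not a compatibility check.

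Retreating to the irreducible locus does not rescue the argument. It proves a claim about a strictly smaller open substack, since the $\rho$ above is smooth but reducible. Even there, the Reynolds operator only gives exactness of $(-)^G$. You would still need $H^1$-vanishing on $X^{irr}$, which is open but in general not affine, and nothing in the proposal provides it.

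To finish you need a genuinely new input rather than an appeal to reductivity. For example, a direct computation of the groups $H^1\bigl((\mathfrak{X}_{sm})_{\operatorname{red}},\mathcal{F}^i\bigr)$, or of the obstruction classes themselves.
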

Given a point $x: \mathrm{Spec}(\mathbb{C}) \rightarrow \mathfrak{X}$, we can consider the tangent complex at the point $x$, which is given by the two-term complex 
\begin{align}
    [\mathfrak{g} \longrightarrow \mathcal{T}_{X,x}].
\end{align}
Along the smooth locus, we obtain the tangent space 
\begin{align}
    \mathcal{T}_{X,x}/\mathfrak{g}.
\end{align}
This object will have the expected dimension of $(2g-2)\mathrm{dim}(\mathrm{OSp}(1|2))$ and for the details on the explicit calculations we will refer the interested reader to \ref{tangent}. 
\appendix
\section{Computations}
This appendix provides a detailed derivation of the dimension of the tangent space of the representation variety $\mathcal{T}_{X,x}$. We conclude that the character stack has the expected dimension for a super Riemann surface of genus $g$. The classical proofs can be adapted with minimal modification from the literature, which we will be pointing towards for each statement. The necessary modification one has to do to adapt these results to the supergeometric case is replace the usual dual numbers, by the so-called super dual numbers, which refers to the superalgebra $k[\varepsilon,\eta]/(\varepsilon^2,\varepsilon \eta)$, where $\mathrm{deg(\varepsilon)= 0}$ and $\mathrm{deg}(\eta)= 1$. We begin by proving the following lemma.
\begin{lemma}[\cite{milne2017algebraic}]
    Let $(A, \Delta)$ be a super Hopf algebra over k and let $I$ denote the kernel of the co-identity map $\epsilon$, then $A \cong k \oplus I$ as k-modules.
\end{lemma}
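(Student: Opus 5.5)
The statement asks, for a super Hopf algebra $(A,\Delta,\epsilon,S)$ over $k$, for a splitting $A\cong k\oplus I$ as $k$-modules, where $I=\ker(\epsilon)$. The approach is the classical one: exhibit the unit map as a section of the counit, and check that everything respects the $\mathbb{Z}_2$-grading.

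Let $u\colon k\to A$, $\lambda\mapsto \lambda\cdot 1_A$, be the unit. Since $\epsilon\colon A\to k$ is a morphism of superalgebras, it sends $1_A$ to $1$. Hence $\epsilon\circ u=\operatorname{id}_k$, which in particular shows $\epsilon$ is surjective. This gives the short exact sequence of $k$-modules
\begin{center}
\begin{tikzcd}
0 \arrow[r] & I \arrow[r] & A \arrow[r, "\epsilon"] & k \arrow[r] & 0,
\end{tikzcd}
\end{center}
split by $u$. Concretely, the decomposition is $a\mapsto (\epsilon(a),\, a-\epsilon(a)1_A)$, with inverse $(\lambda,x)\mapsto \lambda 1_A + x$:
\begin{itemize}
\item $a-\epsilon(a)1_A$ lies in $I$, because applying $\epsilon$ gives $\epsilon(a)-\epsilon(a)=0$.
\item Composing the two maps in either order returns the identity, by the same identity $\epsilon(1_A)=1$.
\item $u(k)\cap I=0$, since $\epsilon(\lambda 1_A)=\lambda$ vanishes only when $\lambda=0$.
\end{itemize}

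For the super setting, one also checks compatibility with the grading. Regard $k$ as concentrated in even degree. The maps $\epsilon$ and $u$ are even, since morphisms of superalgebras preserve degree. Therefore $I$ is a graded submodule: if $a=a_0+a_1$ lies in $I$, then $\epsilon(a_1)=0$ automatically, so $\epsilon(a_0)=0$ and both components lie in $I$. The isomorphism above is then an isomorphism of super $k$-modules, with $I_1=A_1$ and $A_0=k\oplus I_0$.

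I expect no real obstacle here. The only point that needs care is justifying $\epsilon(1_A)=1$. If the paper's definition of Hopf superalgebra does not explicitly require the counit to be unital, I would derive it from the counit axiom $(\epsilon\otimes\operatorname{id})\Delta=\operatorname{id}$ together with $\Delta(1)=1\otimes 1$. This gives $\epsilon(1)1=1$, hence $\epsilon(1)=1$.
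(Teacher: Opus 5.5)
Your proof is correct and has the same skeleton as the paper's: both rest on the short exact sequence $0 \to I \to A \xrightarrow{\epsilon} k \to 0$. The difference is in why this sequence splits.

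The paper observes that it is a sequence of super vector spaces over a field, hence split. This leaves two things implicit: that $\epsilon$ is surjective, and which complement is chosen. You instead use $\epsilon(1_A)=1$ to make the unit $u\colon k\to A$ an explicit section. That gains you several things:
\begin{itemize}
\item surjectivity of $\epsilon$ is actually justified;
\item the complement $k\cdot 1_A$ is canonical;
\item the argument needs no semisimplicity, so it works over any commutative base ring, in line with the ``as $k$-modules'' phrasing;
\item your degree check gives the finer statement $I_1=A_1$, $A_0=k\cdot 1_A\oplus I_0$, confirming that the splitting is one of super modules, which the paper only asserts.
\end{itemize}

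One small caveat concerns your fallback derivation from $\Delta(1)=1\otimes 1$. It only gives $(\epsilon(1)-1)1_A=0$. Over a field with $A\neq 0$ this forces $\epsilon(1)=1$, but over a general ring it does not. This is harmless, since unitality of $\epsilon$ is part of the bialgebra axioms; the paper's proof likewise uses that $\epsilon$ is a superalgebra morphism.
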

\begin{proof}
    The co-identity map is a morphism of super algebras, so in particular it is a morphism of super vector spaces, hence we get the following sequence
    \begin{center}
        \begin{tikzcd}
        0 \arrow[r] & \mathrm{ker}(\epsilon) \arrow[r] & A \arrow[r, "\epsilon"] & k \arrow[r] & 0 
        \end{tikzcd}
    \end{center}
    This is a sequence of super vector spacecs, so it splits and we get the wanted isomophism.
\end{proof}
The next theorem was originally proven in \cite{lubotzky1985varieties}. However, we will follow the more detailed presentation of \cite{sauer2020character}.
\begin{theorem}\label{tangent}
    Let $R(\Gamma,G)$ be the coordinate ring of the representation variety $\mathrm{Hom}(\Gamma,G)$ for $\Gamma$ a finitely generated group and $G$ an algebraic supergroup. Then the Zariski tangent space at a point $\rho$ is isomorphic to $Z(\Gamma, Ad_\rho)$.
\end{theorem}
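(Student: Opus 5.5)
The plan is to follow the classical argument of Lubotzky--Magid, replacing ordinary dual numbers by the super dual numbers $D \coloneqq k[\varepsilon,\eta]/(\varepsilon^2,\varepsilon\eta)$ with $\deg\varepsilon=0$, $\deg\eta=1$. Here $Z(\Gamma,\mathrm{Ad}_\rho)=Z^1(\Gamma,\mathfrak{g}_{\mathrm{Ad}\rho})$ denotes the crossed homomorphisms (1-cocycles) $c:\Gamma\to\mathfrak{g}$ with $c(\gamma_1\gamma_2)=c(\gamma_1)+\mathrm{Ad}_{\rho(\gamma_1)}c(\gamma_2)$, and $\mathfrak{g}=\mathfrak{g}_0\oplus\mathfrak{g}_1$ is the Lie superalgebra of $G$.

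First I would identify the Zariski tangent space of $\operatorname{Spec}R(\Gamma,G)$ at the $k$-point $\rho$. By definition it is $(\mathfrak m_\rho/\mathfrak m_\rho^2)^\vee$, a super vector space. Equivalently, it is the set of superalgebra maps $R(\Gamma,G)\to D$ lifting $\rho:R(\Gamma,G)\to k$. I would use the preceding lemma, $A\cong k\oplus I$ with $I=\ker\epsilon$, applied to $k[G]$. This identifies the even and odd parts of $\mathfrak{g}=(I/I^2)^\vee$ with the $D$-points of $G$ lying over the identity, namely elements of the form $1+\varepsilon X_0+\eta X_1$.

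Next comes the functor-of-points step. A $D$-point of $\mathrm{Hom}(\Gamma,G)$ over $\rho$ is a group homomorphism $\tilde\rho:\Gamma\to G(D)$ reducing to $\rho$. For a finitely generated $\Gamma$ this is a tuple of elements of $G(D)$ satisfying the relations, and the argument does not depend on the chosen presentation. Since the reduction $G(D)\to G(k)$ is split by $G(k)\hookrightarrow G(D)$ and its kernel is $\{1+\varepsilon X_0+\eta X_1\}\cong\mathfrak{g}$, I write $\tilde\rho(\gamma)=(1+\varepsilon X_0(\gamma)+\eta X_1(\gamma))\rho(\gamma)$.

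Expanding the homomorphism condition $\tilde\rho(\gamma_1\gamma_2)=\tilde\rho(\gamma_1)\tilde\rho(\gamma_2)$ uses $\varepsilon^2=\varepsilon\eta=\eta^2=0$ to kill all quadratic terms, together with $\rho(\gamma_1)(1+\varepsilon X+\eta Y)\rho(\gamma_1)^{-1}=1+\varepsilon\mathrm{Ad}_{\rho(\gamma_1)}X+\eta\mathrm{Ad}_{\rho(\gamma_1)}Y$. This is exactly the cocycle identity for $c=X_0+X_1$. Conversely, every cocycle defines a homomorphism, so the assignment is bijective. I would also check that it is compatible with the $k$-linear structure on tangent vectors, where addition is via $D\times_k D$; this makes it an isomorphism of super vector spaces.

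The main obstacle is the sign and parity bookkeeping in the odd direction. I need to verify that $\eta X_1$ with $X_1\in\mathfrak{g}_1$ is genuinely an even element of $\mathfrak{g}\otimes D$, so that it defines a $D$-point, and that the conjugation formula holds with no extra Koszul signs. This comes down to $\rho(\gamma)$ being an even $k$-point, so it commutes with $\eta$. Once that is checked, the classical computation goes through verbatim.
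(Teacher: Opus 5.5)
Your proposal is correct and follows essentially the same route as the paper. Both arguments use the super dual numbers $k[\varepsilon,\eta]/(\varepsilon^2,\varepsilon\eta)$ and split a lift as $\tilde\rho(\gamma)=\sigma(\tilde\rho(\gamma))\,\rho(\gamma)$, where $\sigma(g)=g\pi(g)^{-1}$ lands in $\ker(G(D)\to G(k))\cong\mathfrak{g}$; both then show that the homomorphism condition is exactly the cocycle identity, with inverse $\Sigma\mapsto(\gamma\mapsto\Sigma(\gamma)\rho(\gamma))$. Your version is more careful than the paper's on points it leaves implicit: you identify $(\mathfrak m_\rho/\mathfrak m_\rho^2)^\vee$ with $D$-valued lifts of $\rho$, check the linear structure, and track the parity of $\eta X_1$.
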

\begin{proof}
    We can now describe a morphism from the supergroup $G(\mathbb{C}/[\varepsilon,\eta])$ to the Lie algebra $\mathfrak{g}$.
Define 
\begin{align*}
    \sigma(g) = g \pi(g)^{-1}
\end{align*} 
where $\pi$ is a morphism of supergroups $G(\mathbb{C}[\varepsilon,\eta](\varepsilon^2,\eta \varepsilon)) \rightarrow G(\mathbb{C})$ induced by $\mathbb{C}[\varepsilon,\eta](\varepsilon^2,\eta \varepsilon) \rightarrow \mathbb{C}$, where we send both variables to zero.
By applying $\pi$ again, we can check if it is an element of the Lie algebra:
\begin{align*}
    \pi(\sigma(g))  & = \pi ((g \pi(g)^{-1})) \\
                    & = \pi(g) \pi^2(g^{-1}) \\
                    & = e
\end{align*}
We can now compute the following
\begin{align*}
    \sigma(g_1g_2) & = g_1g_2\pi(g_1g_2)^{-1} \\
                   & = g_1g_2\pi(g_2)^{-1}\pi(g_1)^{-1} \\
                   & = g_1(\pi(g_1)^{-1}\cdot \pi(g_1))\sigma(g_2)\pi(g_1)^{-1} \\
                   & = \sigma(g_1) \cdot Ad_{\pi(g_1)} \sigma(g_2)
\end{align*}
And we are allowed to switch to additive notation and we get the 1-cocycle condition for group cohomology
\begin{align*}
    \sigma(gh) = \sigma(g) + Ad_{\pi(g)} \sigma(h)
\end{align*}
So it is a cycle in $Z^1(\Gamma, \mathfrak{g})$. \\
We now also get a map from $\Gamma \rightarrow \mathfrak{g}$, by using the universal property
\begin{center}
    \begin{tikzcd}
        \Gamma \arrow[r, "\rho_U"] \arrow[dr, "\psi_f"] & G(R(\Gamma,G)) \arrow[d, "G(f)"] \\
         & G(\mathbb{C}[\varepsilon, \eta]) \arrow[d, "\sigma"]\\
         & \mathfrak{g}
    \end{tikzcd}
\end{center}
We will call this map $\Sigma_f \coloneqq \sigma \circ \psi_f$, which fulfills the cocycle condition, because $\psi_f$ is a group homomorphism and the properties of $\sigma$. Consider the diagram 
\begin{center}
    \begin{tikzcd}
        \Gamma \arrow[r, "\rho_U"] \arrow[ddr, "\rho", swap] & G(R(\Gamma,G)) \arrow[d, "G(f)"] \arrow[dd, bend left=60, "G(h_\rho)"] \\
        & G(\mathbb{C}[\varepsilon, \eta]) \arrow[d] \\
        & G(\mathbb{C})
    \end{tikzcd}
\end{center}Let $f \in T_\rho G = \{f| R(\Gamma, G) \xrightarrow{f} k[\varepsilon, \eta] \xrightarrow{\pi} k, \pi \circ f = h_\rho \}$ then we get a morphism 
\begin{align*}
    \Psi_\rho: \ T_\rho \mathrm{Hom}(\Gamma, G) & \longrightarrow Z^1(\Gamma, \mathfrak{g}_{Ad_\rho}) \\
    f &\longmapsto \Sigma_f
\end{align*} 
Now let us also define an inverse. Let $\Sigma \in Z^1(\Gamma, \mathfrak{g}_{Ad_\rho})$
\begin{align*}
    \phi_\Sigma: \ \Gamma & \longrightarrow G(\mathbb{\varepsilon, \eta}) \\
    \gamma & \longmapsto \Sigma (\gamma) \cdot \rho(\gamma)
\end{align*}
We calculate:
\begin{align*}
    \phi_\Sigma(\gamma \delta) & = \Sigma(\gamma \delta) \cdot \rho(\gamma) \\
    & = \Sigma(\gamma) \cdot Ad_{\rho(\gamma)}\Sigma(\delta) \cdot \rho(\gamma \delta) \\
    & = \Sigma(\gamma) \rho(\gamma) \Sigma(\delta) \rho(\gamma)^{-1} \rho(\gamma) \rho(\delta) \\
    & = \Sigma(\gamma)\rho(\gamma) \Sigma(\delta) \rho(\delta) \\
    & = \phi_\Sigma (\gamma) \phi_\Sigma (\delta)
\end{align*}
So this is indeed a group homomorphism. By the universal property we get a morphism 
\begin{center}
    \begin{tikzcd}
        \Gamma \arrow[r, "\rho_U"] \arrow[rd, "\phi_\Sigma", swap] & G(R(\Gamma, G)) \arrow[d, "G(f_\Sigma)"]\\
         & G(\mathbb{C}[\varepsilon, \eta])
    \end{tikzcd}
\end{center}
Recall that $\mathfrak{g} \cong ker(\pi)$ and $\rho(\gamma) \in G(\mathbb{C})$. It follows that $\pi \circ \phi_\Sigma = \rho$, because
\begin{align*}
    \pi (\phi_\Sigma(\gamma)) = \pi(\Sigma(\gamma \cdot \rho(\gamma))) = \rho(\gamma).
\end{align*}
So, we get the map 
\begin{align*}
   \Phi: \  Z^1(\Gamma, \mathfrak{g}_{Ad_\rho}) & \longrightarrow T_\rho \mathrm{Hom}(\Gamma, G) \\
\Sigma & \longmapsto f_\Sigma
\end{align*}
The only thing remaining is that we have to prove, that $\Psi_\rho, \Phi_\rho$ are inverse to each other. Applying $\Phi_\rho(\Psi_\rho(f)) =f_{\Sigma_f}$. By the universal property 
\begin{center}
    \begin{tikzcd}
        \Gamma \arrow[r, "\rho_U"] \arrow[dr, "\phi_{\Sigma_f}", swap] & G(R(\Gamma, G)) \arrow[d, "G(f_{\Sigma_f})"] \\
        & G(\mathbb{C}[\varepsilon, \eta])
    \end{tikzcd}
\end{center}
we can now calculate 
\begin{align*}
    \phi_{\Sigma_f}(\gamma) &= \Sigma_f(\gamma) \cdot \rho(\gamma) \\
                            &= (\sigma \circ G(f)\circ \rho_U)(\gamma) \cdot \rho(\gamma )\\
                            &= (G(f) \circ \rho_U(\gamma)) \cdot ((\pi \circ G(f)\circ \rho_U)(\gamma))^{-1} \cdot \pi \circ G(f)\circ \rho_U)(\gamma) \\
    \phi_{\Sigma_f}(\gamma)&= G(f) \circ \rho_U(\gamma)
\end{align*}
By uniqueness of the universal property, we get $\phi_{\Sigma_f} = \psi_f$. It follows that $G(f) = G(f_{\Sigma_f})$ so we get $f = f_{\Sigma_f}$. The other direction goes as follows:
\begin{align*}
    (\Psi_\rho \circ \Phi_\rho)(\Sigma) & = \sigma \circ \psi_{f_\Sigma} \\
    &= \sigma \circ G(f_{\Sigma}\circ \rho_U) \\
    &=(G(f_{\Sigma}) \circ \rho_U)(\gamma)((\pi \circ G(f_{\Sigma}) \circ \rho_U)(\gamma))^{-1} \\
    &= \phi_\Sigma(\gamma) \cdot \rho(\gamma) (\pi(\phi_\Sigma(\gamma)))^{-1} \\
    &= \Sigma(\gamma) \cdot \rho(\gamma) (\pi(\Sigma(\gamma)\cdot \rho(\gamma)))^{-1} \\
    &=\Sigma(\gamma) \cdot \rho(\gamma) \cdot e \cdot \rho(\gamma)^{-1}\\
    &= \Sigma(\gamma) 
\end{align*}
So indeed the two morphisms are inverse to each other and we get the isomorphism
\begin{align*}
    Z^1(\Gamma, \mathfrak{g}_{Ad_\rho}) \cong T_\rho \mathrm{Hom}(\Gamma, G)
\end{align*}
\end{proof}
The next proposition, was originally proven in \cite{goldman1984symplectic} and we will follow the explicit exposition given in \cite{notes-character-varieties}.
\begin{proposition}\label{Arnaud}
    Let $X$ be a Riemann surface of genus $g$ and $G \in \mathrm{Grp(Sch)}$, such that it admits a symmetric, bilinear, non-degenerate pairing. Then the dimension of the 1-cocycles is given by 
    \begin{align}
        \mathrm{dim}Z^1(\Gamma,Ad_\rho) = (2g-1)\mathrm{dim}(G) + \mathrm{dim}(C_{G}(\pi_1(X))).
    \end{align}
\end{proposition}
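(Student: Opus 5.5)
The formula should follow from the identification of \eqref{tangent}, $T_\rho \operatorname{Hom}(\Gamma,G)\cong Z^1(\Gamma,\operatorname{Ad}_\rho)$, combined with the standard exact sequence relating cocycles, coboundaries and cohomology, and Poincaré duality on the surface group. Here $\Gamma=\pi_1(X)=\langle a_1,b_1,\dots,a_g,b_g \mid \prod_i [a_i,b_i]\rangle$, and $\mathfrak{g}$ carries the adjoint action twisted by $\rho$. Throughout, dimensions are understood as super dimensions $p|q$ and all linear algebra is done in super vector spaces. This is where the super dual numbers $k[\varepsilon,\eta]/(\varepsilon^2,\varepsilon\eta)$ enter: they detect even and odd tangent directions simultaneously, so each count below is done separately in the even and odd parts.

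\textbf{Step 1: cocycles as the kernel of a linear map.} A cocycle $Z$ is determined by its values on the $2g$ generators, so $Z^1(\Gamma,\mathfrak{g})$ embeds into $\mathfrak{g}^{2g}$. It is cut out by the single linearized relation
\[
d\mathcal{R}:\mathfrak{g}^{2g}\to\mathfrak{g},
\]
where $\mathcal{R}=\prod_i[a_i,b_i]$ and $d\mathcal{R}$ is the Fox derivative of the relator, evaluated via $\operatorname{Ad}_\rho$. Hence
\[
\dim Z^1 = 2g\dim G - \operatorname{rk}(d\mathcal{R}) = 2g\dim G-\dim G+\dim\operatorname{coker}(d\mathcal{R}).
\]

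\textbf{Step 2: identify the cokernel.} The cokernel of $d\mathcal{R}$ is $H^2(\Gamma,\mathfrak{g})$, because for the one-relator presentation the Fox complex
\[
0\to\mathfrak{g}\to\mathfrak{g}^{2g}\to\mathfrak{g}\to 0
\]
computes group cohomology, $\Gamma$ being a Poincaré duality group with aspherical presentation complex. The nondegenerate symmetric invariant pairing $B$ on $\mathfrak{g}$, here the super Killing form of $\mathfrak{osp}(1|2)$, which is nondegenerate and $\operatorname{Ad}$-invariant, combines with cup product to give Poincaré duality
\[
H^2(\Gamma,\mathfrak{g})\cong H^0(\Gamma,\mathfrak{g})^\vee.
\]
The group $H^0(\Gamma,\mathfrak{g})=\mathfrak{g}^{\operatorname{Ad}\rho(\Gamma)}$ is the Lie superalgebra of the centralizer $C_G(\rho(\Gamma))$. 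This identification is precisely the role of the hypothesis that $G$ admits a symmetric bilinear nondegenerate pairing. Concretely, one checks that $B$-orthogonality identifies the image of $d\mathcal{R}$ with $(\mathfrak{g}^{\rho(\Gamma)})^{\perp}$, using invariance of $B$ to move $\operatorname{Ad}$ across the pairing.

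\textbf{Step 3: combine.} Putting the pieces together gives
\[
\dim Z^1=(2g-1)\dim G+\dim C_G(\rho(\Gamma)).
\]
The main obstacle is Step 2 in the super setting. One must verify that Poincaré duality, or the direct orthogonality computation, goes through with the Koszul signs and a supersymmetric form. For $\mathfrak{osp}(1|2)$ the form is symmetric on $\mathfrak{g}_0$ and skew on $\mathfrak{g}_1$. I would avoid general duality theory and compute directly: show that $\xi\in\mathfrak{g}$ is $B$-orthogonal to the image of $d\mathcal{R}$ if and only if $B(\xi,(\operatorname{Ad}_{\rho(\gamma)}-1)X)=0$ for all generators $\gamma$ and all $X$. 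By invariance and nondegeneracy this holds if and only if $\operatorname{Ad}_{\rho(\gamma)}\xi=\xi$ for every generator, i.e. $\xi$ is fixed by $\rho(\Gamma)$. Nondegeneracy of the restriction of $B$ to the even and odd parts separately then gives the count independently in each parity.
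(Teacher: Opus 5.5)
Your argument is correct, but it takes a different route from the paper. The paper argues cohomologically:
\begin{itemize}
\item it identifies $H^*(\pi_1(X),\mathrm{Ad}_\rho)$ with twisted de Rham cohomology of $X$;
\item it uses that $h^0-h^1+h^2$ is independent of $\rho$, and hence equals $\chi(X)\dim G$;
\item it invokes Poincar\'e duality plus the invariant form to get $h^2=h^0=\dim C(\rho)$;
\item it adds $\dim B^1=\dim G-\dim C(\rho)$ to $\dim H^1$.
\end{itemize}
You never pass through $H^1$ or $\chi(X)$. You realize $Z^1$ directly as the kernel of the even map $d\mathcal{R}\colon\mathfrak{g}^{2g}\to\mathfrak{g}$ built from the Fox derivatives of the relator. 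Rank--nullity then reduces everything to showing $\operatorname{im} d\mathcal{R}=(\mathfrak{g}^{\rho(\Gamma)})^{\perp}$, which is finite-dimensional linear algebra with an even, $\mathrm{Ad}$-invariant, nondegenerate form.

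What your route buys:
\begin{itemize}
\item Every step visibly survives in the super setting: all maps are even, so the count splits over $\mathfrak{g}_0$ and $\mathfrak{g}_1$. The paper instead relies on twisted de Rham theory, $\rho$-independence of $\chi$ and twisted Poincar\'e duality, whose super versions it asserts rather than checks.
\item You avoid the bookkeeping in the paper's last display. There $\dim H^1$ is written as $\chi(X)\dim G$, although $h^0=h^2=\dim C(\rho)$ forces $\dim H^1=(2g-2)\dim G+2\dim C(\rho)$. The stated formula is recovered only through compensating sign changes.
\end{itemize}
The paper's route, in turn, does not depend on the explicit one-relator presentation, and it produces $\dim H^1$ directly.

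The one step you should actually write out is the orthogonality computation. $\operatorname{im} d\mathcal{R}$ is not spanned by the subspaces $(\mathrm{Ad}_{\rho(\gamma)}-1)\mathfrak{g}$ for generators $\gamma$. With $P_i=\prod_{k<i}[a_k,b_k]$, it is spanned by
\begin{itemize}
\item $\mathrm{Ad}_{\rho(P_i)}(1-\mathrm{Ad}_{\rho(a_ib_ia_i^{-1})})\mathfrak{g}$, and
\item $\mathrm{Ad}_{\rho(P_ia_i)}(1-\mathrm{Ad}_{\rho(b_ia_i^{-1}b_i^{-1})})\mathfrak{g}$.
\end{itemize}
Invariance of $B$ therefore only shows that $\xi\perp\operatorname{im} d\mathcal{R}$ if and only if $\xi$ is fixed by the corresponding conjugates $P_ia_ib_ia_i^{-1}P_i^{-1}$ and $P_ia_ib_ia_i^{-1}b_i^{-1}a_i^{-1}P_i^{-1}$. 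You then need that these elements generate $\Gamma$. This follows inductively: from $a_1b_1a_1^{-1}$ and $a_1b_1a_1^{-1}b_1^{-1}a_1^{-1}$ one recovers $a_1b_1$, hence $a_1$ and $b_1$, hence $P_2$, and so on. With that filled in, your proof is complete.
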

\begin{proof}
    We give a sketch of the proof by providing the key arguments, which should convince the reader. First of all, since $X$ is an Eilenberg-MacLane space $K(\pi_1(X),1)$ computing group cohomology is the same as computing de Rham cohomology with local coefficients, which lets us identify 
    \begin{align}
        H^*(\pi_1(X),Ad_\rho) \cong H^*_{dR}(X,E_\rho),
    \end{align}
    so the cohomology vanishes in degrees larger than $2$. Furthermore, we have the Euler characteristic formula,
    \begin{align}
       \chi(X,\rho)  =\mathrm{dim}(H^0(\pi_1(X),Ad_\rho)) - \mathrm{dim}(H^1(\pi_1(X),Ad_\rho)) + \mathrm{dim}(H^2(\pi_1(X),Ad_\rho)).
    \end{align}
    This is independent of $\rho$, because when defining group cohomology, the twisting by $\rho$ is only realized inside of the differentials of the chain complex, so the chain complex itself does not see the twisting. For the trivial representation $\rho$ the Euler characteristic turns into 
    \begin{align}
        \chi(X,\mathrm{triv}) = \chi(X) \mathrm{dim}(G).
    \end{align}
    By Poincaré duality and the non-degeneracy of the Killing form, we obtain 

    \begin{align}
        H^2(\pi_1(X),Ad_\rho) \overset{PD}{\cong} H^0(\pi_1(X),Ad_\rho^\vee)^\vee \overset{Ad_\rho^\vee \cong Ad_\rho}{\cong} H^0(\pi_1(X),Ad_\rho).
    \end{align}
    It is true, that $H^0(\pi_1(X),Ad_\rho)$ is the space of $Ad(\rho)$-invariant elements and also that $\mathrm{dim}(B^1(\pi_1(X),Ad_\rho)) = \mathrm{dim}(G)-\mathrm{dim}(C(\rho))$. Combining these two facts we obtain f
    \begin{align}
        \begin{split}
            \mathrm{dim}(Z^1(\pi_1(X),Ad_\rho))&= \mathrm{dim}(H^1(\pi_1(X),Ad_\rho)) + \mathrm{dim}(B^1(\pi_1(X),Ad_\rho)) \\
            & = \chi(X) \mathrm{dim}(G) + \mathrm{dim}(G) - \mathrm{dim}(C(\rho)) \\
            &= (2g-2) \mathrm{dim}(G) + \mathrm{dim}(G) - \mathrm{dim}(C(\rho)) \\
            &=(2g-1)\mathrm{dim}(G) + \mathrm{dim}(C(\rho))
        \end{split}
    \end{align}
    We note that $C(\rho) = C(G)$ minimizes the dimension. 
 \end{proof}
 As a corollary we obtain our wanted dimension of the tangent space.
 \begin{corollary}
    The dimension of the tangent space of a smooth point of $\mathrm{Rep}(\pi_1(X), \mathrm{OSp}(1|2))$ is $(2g-1)\mathrm{dim}(\mathrm{OSp}(1|2))$.
 \end{corollary}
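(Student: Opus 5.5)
The statement to be proved is the last Corollary: at a smooth point $\rho$ of $\mathrm{Rep}(\pi_1(X),\mathrm{OSp}(1|2))$, the tangent space has dimension $(2g-1)\dim(\mathrm{OSp}(1|2))$. The plan is to chain Theorem \ref{tangent} with Proposition \ref{Arnaud}, and then show that the centralizer term vanishes at such a point.

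First, Theorem \ref{tangent} identifies the Zariski tangent space at $\rho$ with the space of 1-cocycles $Z^1(\pi_1(X),\mathrm{Ad}_\rho)$. Here the super dual numbers $k[\varepsilon,\eta]/(\varepsilon^2,\varepsilon\eta)$ record the even and odd directions separately, so all dimensions below are super dimensions $p|q$. Second, Proposition \ref{Arnaud} needs a symmetric, bilinear, non-degenerate invariant pairing on the Lie superalgebra. For $\mathfrak{osp}(1|2)$ I will take the supertrace form $(X,Y)\mapsto \operatorname{str}(XY)$. It is ad-invariant, and it is non-degenerate because $\mathfrak{osp}(1|2)$ is simple with nonzero Killing form. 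Granting this, Proposition \ref{Arnaud} gives
\begin{align*}
\dim Z^1(\pi_1(X),\mathrm{Ad}_\rho)=(2g-1)\dim(\mathrm{OSp}(1|2))+\dim C(\rho),
\end{align*}
where $C(\rho)$ is the centralizer of the image of $\rho$.

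Third, I must show $\dim C(\rho)=0$ at a smooth point. I will define the smooth locus as the locus of irreducible representations, with trivial infinitesimal stabilizer. There, $H^0(\pi_1(X),\mathrm{Ad}_\rho)=\mathfrak{g}^{\rho}=0$. The even part of this vanishing is the classical statement that an irreducible $\mathrm{SL}_2$-representation has finite centralizer, namely $\pm 1$. The only other central element is the $\mathbb{Z}_2$ element $\mathrm{diag}(-1,-1,1)$, which is discrete and so contributes nothing to the Lie algebra. For the odd part, note that an odd invariant $Q\in\mathfrak{g}_1\cong V_{\mathrm{std}}$ of $\mathfrak{sl}_2$ would be a vector fixed by the underlying irreducible $\mathrm{SL}_2$-representation. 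No such nonzero vector exists, because the standard representation restricted to an irreducible (Zariski dense) subgroup has no fixed vectors. Hence $\dim C(\rho)=0|0$, and substituting gives $(2g-1)\dim(\mathrm{OSp}(1|2))=(2g-1)(3|2)$.

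The main obstacle is the second step. Proposition \ref{Arnaud} rests on Poincaré duality and an Euler-characteristic count, and both need the super setting. I would verify them by splitting $\mathrm{Ad}_\rho$ into its even and odd parts over the reduced representation $\rho_{\mathrm{red}}$. In the even part, $\mathfrak{g}_0=\mathfrak{sl}_2$ is the usual adjoint module. In the odd part, $\mathfrak{g}_1$ is the standard 2-dimensional module, which is self-dual through the symplectic form. For each part, $H^2\cong (H^0)^\vee$ and $\chi=(2-2g)\dim$. These are classical local-system computations on the surface, so the formula holds degreewise, which is exactly what the super dimension count requires. A related subtlety is that Grassmann-valued points do not split canonically. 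To avoid this I would do the computation at $\mathbb{C}$-points, where $\mathrm{Ad}_\rho$ is a genuine $\mathbb{Z}_2$-graded representation of $\pi_1(X)$ and the splitting is canonical.
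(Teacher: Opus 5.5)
Your argument has the paper's skeleton: Theorem~\ref{tangent} identifies $T_\rho$ with $Z^1(\pi_1(X),\mathrm{Ad}_\rho)$, Proposition~\ref{Arnaud} gives $(2g-1)\dim\mathrm{OSp}(1|2)+\dim C(\rho)$, and the centralizer term is shown to vanish. Where you differ is in how you kill that last term and in how much of the super bookkeeping you check. The paper only notes that the center of $\mathrm{OSp}(1|2)$ is finite and relies on its remark that $C(\rho)=C(G)$ ``minimizes the dimension''. You instead compute $\mathfrak g^\rho$ itself: on $\mathfrak g_0=\mathfrak{sl}_2$ by Schur, and on $\mathfrak g_1\cong\mathbb C^2$ by the absence of an invariant line. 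You also verify at $\mathbb C$-points that the Euler characteristic and Poincar\'e duality count behind Proposition~\ref{Arnaud} holds for each graded piece separately, using $\operatorname{str}(XY)$ and the symplectic self-duality of $\mathbb C^2$; the paper leaves this implicit.

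Neither version proves that smooth points are exactly the points where $\mathfrak g^\rho=0$. You impose this by redefining the smooth locus, and the paper assumes it. Deriving it from the paper's definition (tangent dimension equals scheme dimension) would need the dimension of the representation superscheme near $\rho$. That is a Goldman-type input, available for $g\ge 2$ but false for $g=1$, where smooth points of $\mathrm{Hom}(\mathbb Z^2,\mathrm{SL}_2)$ have $4$-dimensional tangent space. Your count in fact only uses $\mathfrak g^\rho=0$, so irreducibility is more than you need; reducible non-abelian $\rho$ also qualify.

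Two harmless slips. First, $\mathrm{diag}(-1,-1,1)$ is just the image of $-1\in\mathrm{SL}_2$, not a further element. It is also not central in the supergroup, because it negates the odd coordinates, as the paper computes in the subsection on character varieties. Second, irreducible subgroups of $\mathrm{SL}_2(\mathbb C)$ need not be Zariski dense, though your argument only uses irreducibility.
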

 \begin{proof}
     A point is smooth if the dimension of the scheme and the Zariski tangent space coincide. Furthermore, the center of $\mathrm{OSp}(1|2)$ is given by $\{\pm \mathrm{id}\}$, thus the dimension of the center is $0$ dimensional. By \eqref{Arnaud} the claim follows.
 \end{proof}
\printbibliography
\end{document}